\PassOptionsToPackage{dvipsnames}{xcolor} 

\documentclass[11pt,letterpaper]{amsart}
\usepackage[a4paper, left=1in, right=1in, top=1.1in, bottom=1.1in]{geometry}

\usepackage[english]{babel}
\usepackage[utf8]{inputenc}
\usepackage{amsmath}
\usepackage{graphicx}
\usepackage{amssymb}
\usepackage{stmaryrd}
\usepackage{amsthm}
\usepackage{tikz-cd}
\usepackage{tikz}
\usetikzlibrary{arrows.meta,calc}
\usepackage{mathrsfs}
\usepackage{csquotes}
\usepackage[colorinlistoftodos]{todonotes}
\usepackage{yfonts}
\usepackage{ dsfont }
\usepackage{xcolor}
\usepackage{multicol}
\usepackage{indentfirst}
\usepackage[hyphens]{url}
\usepackage{hyperref}
\usepackage[hyphenbreaks]{breakurl}
\usepackage{subfiles}
\usepackage{longtable}
\usepackage{float}
\usepackage{subcaption}
\usepackage{mathabx}

\definecolor{tocblue}{RGB}{0,51,153}
\colorlet{tocblueLight}{tocblue!90!white} 
\definecolor{mrred}{HTML}{B33333} 
 \usepackage[
    backend=biber,
    style=alphabetic, maxbibnames=99, url=false
  ]{biblatex}
\usepackage{xurl}
\hypersetup{
    colorlinks,
    linkcolor={mrred},
    citecolor={tocblueLight},
    urlcolor={RoyalPurple}
}
\hypersetup{breaklinks=true}

\usepackage[inline]{enumitem}   
\makeatletter

\newcommand{\inlineitem}[1][]{%
\ifnum\enit@type=\tw@
    {\descriptionlabel{#1}}
  \hspace{\labelsep}%
\else
  \ifnum\enit@type=\z@
       \refstepcounter{\@listctr}\fi
    \quad\@itemlabel\hspace{\labelsep}%
\fi}
\makeatother
\allowdisplaybreaks

 \usepackage[
    backend=biber,
    style=alphabetic, maxbibnames=99, url=false
  ]{biblatex}
\usepackage{cleveref}

\title[Geometrically Abelian Sections on Higher-Dimensional Varieties]{Geometrically Abelian Sections on\\ Higher-Dimensional Varieties}

\author{Daebeom Choi}
\address{Department of Mathematics\\
    University of Pennsylvania\\
    Philadelphia, PA 19104-6395}
\email{dbchoi@sas.upenn.edu}

\date{\today}
\theoremstyle{definition}
\newtheorem{thm}{Theorem}[section]

\newtheorem{lem}[thm]{Lemma}

\newtheorem{prop}[thm]{Proposition}

\newtheorem{defn}[thm]{Definition}

\newtheorem{defthm}[thm]{Definition-Theorem}

\newtheorem{cor}[thm]{Corollary}

\newtheorem{rmk}[thm]{Remark}

\crefname{thm}{Theorem}{Theorems}
\crefname{lem}{Lemma}{Lemmas}
\crefname{prob}{Problem}{Problems}
\crefname{prop}{Proposition}{Propositions}
\crefname{qes}{Question}{Questions}
\crefname{defn}{Definition}{Definitions}
\crefname{eg}{Example}{Examples}
\crefname{ex}{Exercise}{Exercises}
\crefname{conj}{Conjecture}{Conjectures}
\crefname{defthm}{Definition-Theorem}{Definition-Theorems}
\crefname{cor}{Corollary}{Corollaries}
\crefname{claim}{Claim}{Claims}
\crefname{rmk}{Remark}{Remarks}
\crefname{section}{Section}{Sections}

\newcommand{\R}{\mathbb{R}}

\newcommand{\Z}{\mathbb{Z}}
\newcommand{\N}{\mathbb{N}}
\newcommand{\Q}{\mathbb{Q}}

\newcommand{\Pic}{\text{Pic}}
\newcommand{\HH}{\text{H}}

\newcommand{\T}{\mathbb{T}}

\newcommand{\Br}{\mathrm{Br}}

\newcommand{\Hom}{\text{Hom}}

\newcommand{\Alb}{\text{Alb}}
\newcommand{\ab}{\text{ab}}

\DeclareSymbolFont{yhlargesymbols}{OMX}{yhex}{m}{n}
\DeclareMathAccent{\widetriangle}{\mathord}{yhlargesymbols}{"E6} 

\begin{document}
    
\maketitle

\begin{abstract}

We investigate how much arithmetic information about a variety can be detected by sections of the geometrically abelian fundamental groups of sufficiently small open subvarieties. Over a \(p\)-adic field, such a section determines a rational point on the Albanese torsor, and, under a conjectural assumption, this point is represented by a \(0\)-cycle of degree \(1\) on the original variety if the variety has index \(1\). For curves, the latter statement holds unconditionally. Over finitely generated fields, such sections still force the relative Brauer group to vanish away from the characteristic. The key input is an identification of the Chern class obstruction attached to a section with the obstruction arising from local Tate duality. As a byproduct, we give a new proof of an open-subvariety strengthening of a theorem of Esnault and Wittenberg.

\end{abstract}

\section{Introduction}\label{sec:intro}

Grothendieck's section conjecture, originating in \cite{Gro97b,Gro97} as part of the broader anabelian program, predicts that for a hyperbolic curve \(X\) over a finitely generated field \(K\), every section of \(\pi_1(X)\to G_K\) arises from a \(K\)-rational point of \(X\). Subsequent work extended aspects of these anabelian phenomena, including versions of the section conjecture, to other classes of fields, notably \(p\)-adic fields; see, for example, \cite{Moc99,BT08,Pop25}. For surveys on the section conjecture, we refer to \cite{Pop11,Sai11,Sti13b}.

In light of the recent claimed proof of the \(p\)-adic section conjecture by OpenAI \cite{OAI26}, it is natural to ask whether analogous phenomena persist in higher dimensions, as suggested already by Grothendieck's original formulation for higher-dimensional varieties and sufficiently small open subvarieties \cite{Gro97b}. A complementary motivation is that anabelian phenomena can often be detected in remarkably small quotients of the fundamental group \cite{Moc99, BT08, Pop10c, Pop17, ST22, Pop25}, though this is not yet known for the full section conjecture even for curves; we therefore investigate how much arithmetic information is encoded by sections of the geometrically abelian fundamental group of a sufficiently small open subvariety.

Our approach builds on the Chern class obstructions introduced by Sa\"idi \cite{Sai11, Sai12}, whose vanishing is forced by geometrically abelian sections over sufficiently small open subvarieties. Over a \(p\)-adic field, we identify these obstructions with local Tate duality pairings, revealing a natural hierarchy: the obstructions associated with \(\Pic^0\) characterize the geometricity of the induced Albanese section (\cref{thm:main 2}), while those arising from the full Picard group govern the finer question of representing the resulting Albanese point by a \(0\)-cycle of degree \(1\), conditionally for varieties of index \(1\) (\cref{thm:albanese coker}) and unconditionally for curves (\cref{cor:albanes coker}). Their vanishing also yields period \(1\) and the vanishing of the relative Brauer group (\cref{thm:main 2,thm:newmain}).

Before stating our main results, we make the ``sufficiently small open subset'' condition precise. We introduce the following conditions, where \(\widehat{(-)}\) denotes profinite completion:
\[\text{(VP0)}\quad \widehat{\Pic^0(U)}=0,\qquad \text{(VP1)}\quad \widehat{\Pic(U)}=0.\]
By the (topological) finite generation of Picard groups, every variety \(X\) over a \(p\)-adic or a finitely generated field admits an open subset \(U\subseteq X\) satisfying these conditions.

\begin{thm}\label{thm:main}
    Let \(K\) be a \(p\)-adic field, \(A\) be an abelian variety over \(K\), and \(U\subseteq A\) be an open subvariety satisfying \textup{(VP0)}. Then a section \(s:G_K\to\pi_1(A)=\pi_1(A)^{(\ab)}\) is \textit{geometric} if and only if it lifts to a section \(\tilde{s}:G_K\to\pi_1(U)^{(\ab)}\). In particular, every section \(\tilde{s}:G_K\to\pi_1(U)^{(\ab)}\) determines a \(K\)-rational point of \(A\).
\end{thm}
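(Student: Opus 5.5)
The plan is to compare two obstructions to lifting $s$: Sa\"idi's Chern class obstruction to lifting along $\pi_1(U)^{(\ab)}\to\pi_1(A)$, and the Kummer/local Tate duality description of which sections of $\pi_1(A)$ come from points. Since $\pi_1(A)$ is geometrically abelian, sections are classified up to conjugacy by a torsor under $H^1(K,T(A))$, where $T(A)=\varprojlim_n A[n]$. Geometric sections form the image of the Kummer map $A(K)\to H^1(K,T(A))$, taken after fixing a base point $s_0$; if $A(K)=\emptyset$, we work instead with the class of the torsor of sections. The target statement is therefore that $s$ lies in the image of the Kummer map exactly when the Chern class obstructions attached to $U$ vanish.

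\textbf{Step 1 (the extension).} Let $D=A\setminus U$ and $\bar K$ be an algebraic closure. The geometric abelianized fundamental groups fit into an exact sequence
\[0\to \widehat{\Z}(1)\otimes(\text{divisors supported on }D_{\bar K})/\text{(image)}\to \pi_1(U_{\bar K})^{\ab}\to T(A)\to 0.\]
Dually, on $H^1$ we get the sequence relating $H^1(U_{\bar K},\mu_n)$, $H^1(A_{\bar K},\mu_n)$ and the divisors on $D$ modulo $n$. Following Sa\"idi, a section $s$ lifts to $\pi_1(U)^{(\ab)}$ if and only if, for every line bundle $L$ trivialized on $U$, the pullback along $s$ of its Chern class $c_1(L)\in H^2(\pi_1(A),\widehat{\Z}(1))$ vanishes in $H^2(K,\widehat{\Z}(1))$. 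The relevant $L$ are those with divisor supported on $D$, and their classes in $\Pic(A)$ are precisely the kernel of $\Pic(A)\to\Pic(U)$. Condition (VP0) says the classes of $\Pic^0$ trivial on $U$ are dense in $\widehat{\Pic^0(A)}=\widehat{A^\vee(K)}$. The N\'eron--Severi part contributes only a fixed, $s$-independent obstruction relative to a geometric section, so it does not affect the comparison.

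\textbf{Step 2 (key identification).} For $L\in\Pic^0(A)=A^\vee(K)$, I want to show that $s^*c_1(L)\in H^2(K,\widehat{\Z}(1))=\widehat{\Z}$ equals the local Tate pairing $\langle s-s_0,\kappa(L)\rangle$ of $s-s_0\in H^1(K,T(A))$ with the Kummer class $\kappa(L)\in H^1(K,T(A^\vee))$, computed via the Weil pairing, up to the value at $s_0$. The argument would go through the Poincaré bundle. The Chern class of $L\in\Pic^0$ lives in $H^1(K,H^1(A_{\bar K},\widehat{\Z}(1)))$, the Hochschild--Serre graded piece. The first Chern class of the restriction of the Poincaré bundle to $A\times\{L\}$ is the extension class, and $H^1(A_{\bar K},\widehat{\Z}(1))\cong T(A^\vee)$. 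Cup product with $s-s_0$ then lands in $H^2(K,\widehat{\Z}(1))$. I expect this step to be the main obstacle: it requires tracking the Hochschild--Serre filtration and sign conventions carefully. I would first try reducing to finite level $\mu_n$ and using the explicit description of Kummer classes via $n$-isogenies, $[n]^*L\cong L^{n^2}$ restricted to $\Pic^0$, as in the theory of theta groups.

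\textbf{Step 3 (conclusion).} By Step 2 and (VP0), $s$ lifts if and only if $s-s_0$ is orthogonal to the image of $\widehat{A^\vee(K)}$ under $\kappa$. Tate local duality identifies the exact annihilator of $\kappa(\widehat{A^\vee(K)})$ with the image of $\kappa(\widehat{A(K)})$. Since $A(K)$ is profinite, so compact, $\widehat{A(K)}$ corresponds to actual points, so orthogonality holds exactly when $s$ is geometric. The converse direction is that geometric sections lift: a point $a\in A(K)$ lies in a small open $U$ after translation, and a point of $U$ gives a section. Alternatively, the pairing vanishes on Kummer classes, which gives the lift directly. When $A(K)=\emptyset$, I would run the same argument inside $H^1(K,A)$ using the torsor class. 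Finally, note that any section of $\pi_1(U)^{(\ab)}$ induces one of $\pi_1(A)$, which is then geometric and so gives the $K$-point.
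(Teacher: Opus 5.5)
Your forward direction (lift $\Rightarrow$ geometric) has the same skeleton as the paper's argument. A lift kills $s^\ast c_{1,n}(L)$ for $L\in\Pic^0(A)\cap\langle D\rangle$ (\cref{lem:albanese lifting}~(1)). (VP0) makes this subgroup dense (\cref{lem:Picard_vaishing}), and since $L\mapsto s^\ast c_{1,n}(L)$ is a homomorphism to the finite group $\HH^2(K,\mu_n)$, it vanishes on all of $\Pic^0(A)$. Local Tate duality and $A(K)=\widehat{A(K)}$ then conclude (\cref{thm:comp}).

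The gap is Step 2. It carries the whole theorem, and you announce it rather than prove it; your sketch does not contain the mechanism. Two separate facts are needed.

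First, the $E^{1,1}$-component $\eta(c_{1,n}(L))\in\HH^1(K,A^t[n])$ must be the Kummer class of $L\in A^t(K)$. The paper gets this by identifying $\eta$, via Giraud, with the descent obstruction of the root gerbe $\sqrt[n]{L}$. Concretely, this is the cocycle $\sigma\mapsto M^\sigma\otimes M^{-1}$ for an $n$-th root $M$ of $L_{\overline K}$ (\cref{prop:chern class}). Restricting the Poincar\'e bundle to $A\times\{L\}$ just returns $L$ and does not compute $\eta$. Also, on $\Pic^0$ one has $[n]^\ast L\simeq L^{\otimes n}$, not $L^{\otimes n^2}$.

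Second, for every $c\in F^1\HH^2(\pi_1(A),\mu_n)$ you need $(s^\ast-s_0^\ast)c=e(\eta(c)\cup[s-s_0])$. This is not formal, because $s$ is only a group-theoretic section, and your phrase ``cup product with $s-s_0$'' presupposes it. The paper proves it (\cref{prop:group coh calc}) using the tautological class $\xi\in\HH^1(\pi_1(A),A[n])$, given by $(m,g)\mapsto m \bmod n$ in $\pi_1(A)=\pi_1(A_{\overline K})\rtimes G_K$ (split via $s_0$). This class satisfies $s_0^\ast\xi=0$ and $s^\ast\xi=[s-s_0]$, and restricts to reduction mod $n$ on the geometric part. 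Comparing images under $s_0^\ast$ and $\eta$ gives $c=\mathrm{inf}(s_0^\ast c)+e(\mathrm{inf}\,\eta(c)\cup\xi)$, and pulling back along $s$ gives the formula.

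The other steps overreach, and the converse is not established. The lifting criterion in Step 1 is false for $L$ whose class is not divisible by $n$ in $\mathrm{NS}(A_{\overline K})$. Take $U=E\setminus\{0\}$ in an elliptic curve with $E(K)[p]=0$. Then $\pi_1(U)^{(\ab)}=\pi_1(E)$, so every section lifts, yet $s\mapsto s^\ast c_{1,p}(\mathcal{O}_E(0))$ is not identically zero. For such $L$ the class $c_{1,n}(L)$ is not in $F^1$, so \cref{thm:Chern vs Weil} does not apply. In fact $s\mapsto s^\ast c_{1,n}(L)$ is a quadratic function of $[s-s_0]$, whose second difference is the cup product through the alternating form $c_{1,n}(L_{\overline K})$. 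So the N\'eron--Severi part is not $s$-independent. This is why the paper only uses $L$ with $c_{1,n}(L_{\overline K})=0$. Restricted to $L\in\Pic^0(A)\cap\langle D\rangle$, your ``only if'' is correct, and that is all the forward direction needs.

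For the converse, translation by $b\in A(K)$ replaces $U$ by $U+b$. So nothing is shown for $s_a$ with $a\in(A\setminus U)(K)$. Your alternative route needs a converse of the lifting criterion that you have not proved and that is not available here. \cref{lem:albanese lifting}~(2) needs an intermediate open $V$ whose boundary generates $\mathrm{NS}$, which fails for $V=A$. Moreover, $\pi_1(U_{\overline K})^{\ab}$ contains Kummer covers along combinations of boundary components that are $n$-divisible in $\mathrm{NS}$ without being algebraically trivial, and pairings with $\Pic^0$ cannot control these.

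The paper instead notes that a $K$-point of the smooth variety $A$ lifts to a birational section $G_K\to G_{K(A)}^{(\ab)}$, hence to $\pi_1(U)^{(\ab)}$ for every dense open $U$. Your translation idea can also be repaired group-theoretically. The liftable sections form a coset $s_b+H$, where $b\in U(K)$ and $H$ is the image of $\HH^1(K,\pi_1(U_{\overline K})^{\ab})$. This coset contains $s_{b'}$ for every $b'\in U(K)$. Finally, $U(K)-U(K)=A(K)$ because $U(K)$ is dense open in the $p$-adic manifold $A(K)$, so every $s_a$ lies in the coset.
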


Here, \(\pi_1(X)^{(\text{ab})}\) denotes the \textit{geometrically abelian etale fundamental group}, i.e. the extension
\[ 1\to \pi_1(X_{\bar{K}})^{\text{ab}} \to \pi_1(X)^{(\text{ab})}\to G_K\to 1. \] 
The case of elliptic curves in \cref{thm:main} is essentially established in \cite{Sai21}.

\Cref{thm:main} is a consequence of a more intrinsic statement about Chern class obstructions and the Albanese torsor. Recall that the Chern class obstructions are the pullbacks \(s^\ast c_1(L)\in\HH^2(K,\mu_n)\) of Chern classes of line bundles along a section; they can also be defined using geometrically abelian sections; see \cref{prop:F1 comparison,defn:Chern class} and the discussion therein. These are obstructions to a section being geometric because \( s^\ast c_1(L)=0\) for every geometric section, since every line bundle is trivial on a point. Following Sa\"idi, we say that a section is \textit{orthogonal to \(\Pic^0\)/\(\Pic\)} (cf. \cref{defn:good section}) if all of its Chern class obstructions vanish, and \textit{uniformly orthogonal to \(\Pic^0\)/\(\Pic\)} if the same holds after every finite separable extension.

For any smooth variety \(X\), there exists a semi-abelian variety \(\Alb_X^0\), a torsor \(\Alb_X^1\) over \(\Alb_X^0\), and a map \(a:X\to \Alb_X^1\) universal among such pairs (cf. \cref{defthm:albanese}). The period of \(X\) is the order of the element \([\Alb_X^1]\in \HH^1(K, \Alb_X^0) \). Note that if we have a section \(s:G_K\to \pi_1(X)^{(\ab)}\), then by composing with \(a_\ast\) we obtain a section \(s^\text{a}:G_K\to \pi_1(\Alb_X^1)\).

\begin{thm}\label{thm:main 2}
    Let \(X\) be a smooth proper variety over a \(p\)-adic field \(K\), and let \(s:G_K\to \pi_1(X)^{(\ab)}\) be a section. Then the following are equivalent:
    \begin{enumerate}
        \item \(s\) is orthogonal to \(\Pic^0\).
        \item \(s\) is uniformly orthogonal to \(\Pic^0\).
        \item The induced section \(s^\text{a}:G_K\to \pi_1(\Alb_X^1)\) of the Albanese torsor is geometric.
    \end{enumerate}
\end{thm}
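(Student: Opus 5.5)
The plan is to prove (3)\(\Rightarrow\)(2)\(\Rightarrow\)(1) formally and to put all the work into (1)\(\Rightarrow\)(3). For (3)\(\Rightarrow\)(2), I would use that for \(X\) smooth proper, \(\Pic^0_X\) is dual to \(\Alb_X^0\). More precisely, every class in \(\Pic^0(X_L)\), or a suitable \(n\)-divisible model of it, is the pullback along \(a\) of a class in \(\Pic^0\) of the Albanese torsor, up to the usual torsor twist. Hence \(s^\ast c_1(L)=(s^{\text{a}})^\ast c_1(L')\). If \(s^{\text{a}}\) is geometric, coming from a point \(x\in \Alb_X^1(K)\), then after any finite extension \(L/K\) the restriction of \(s^{\text{a}}\) to \(G_L\) is still geometric. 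Therefore \((s^{\text{a}}|_{G_L})^\ast c_1(L')=x_L^\ast c_1(L')=0\), which gives uniform orthogonality. The implication (2)\(\Rightarrow\)(1) is trivial.

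For (1)\(\Rightarrow\)(3), I would reduce everything to the Albanese torsor \(T=\Alb_X^1\) over the abelian variety \(A=\Alb_X^0\); it is abelian because \(X\) is proper. First I would check that \(s\) orthogonal to \(\Pic^0\) implies \(s^{\text{a}}\) orthogonal to \(\Pic^0(T)\). This uses that \(a^\ast:\Pic^0_T\to\Pic^0_X\) induces an isomorphism on the relevant \(n\)-torsion and Kummer data, via \(\Pic^0_{X,\mathrm{red}}=A^\vee\). Next I would analyse sections of \(\pi_1(T)\) by Kummer theory. Sections of \(\pi_1(T)\to G_K\) form a torsor under \(\HH^1(K,TA)\), and their existence is governed by the class \([T]\). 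Since \(\widehat{A(K)}\to \HH^1(K,TA)\) is the Kummer map, the geometric sections are exactly those whose difference from a fixed geometric section lies in the image of \(\widehat{A(K)}\). Working modulo \(n\), a section gives a lift of \([T]\) in a suitable sense, equivalently a class \(\delta_n(s^{\text{a}})\in \HH^1(K,A[n])\) mapping to \([T]\) and to zero in \(\HH^1(K,A)[n]\) only if the section is geometric mod \(n\).

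The key computation, and I expect it to be the main obstacle, is to identify the Chern class obstruction with a Tate pairing. For \(L\in\Pic^0(T)\) corresponding to \(\xi\in A^\vee(K)\) (after twisting), I would show that \((s^{\text{a}})^\ast c_1(L)\in \HH^2(K,\mu_n)\) equals, up to sign and a term independent of \(s\), the cup product \(\langle \delta_n(s^{\text{a}}),\kappa_n(\xi)\rangle\in\HH^2(K,\mu_n)=\frac1n\Z/\Z\). Here \(\kappa_n\) is the Kummer map of \(A^\vee\) and the pairing comes from the Weil pairing \(A[n]\times A^\vee[n]\to\mu_n\). To prove this, I would try to compute \(c_1(L)\) through the \(\mathbb{G}_m\)-torsor/theta-group extension attached to \(L\) pulled back by \([n]\). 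Pulled back along a section, that extension yields precisely the cup product with the Kummer class. This would require careful work with the comparison between \(\pi_1^{(\ab)}\) and the \(n\)-covers of \(T\) (via \cref{prop:F1 comparison}).

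Granting this identity, orthogonality to \(\Pic^0\) says that \(\delta_n(s^{\text{a}})\) is annihilated under local Tate duality by the image of \(A^\vee(K)/n\). By Tate local duality the annihilator of \(A^\vee(K)/n\) in \(\HH^1(K,A[n])\) is exactly \(A(K)/n\). Hence for each \(n\) the section is geometric mod \(n\), that is, it comes from a point of \(T(K)\) modulo \(n\). Passing to the limit over \(n\), using compactness of \(T(K)\) and finiteness of the relevant sets, produces a point \(x\in T(K)\) whose Kummer class agrees with \(s^{\text{a}}\) at every level. Therefore \(s^{\text{a}}\) is geometric.
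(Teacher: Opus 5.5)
Your implications (3)$\Rightarrow$(2)$\Rightarrow$(1) are fine. When $\Alb_X^1(K)\neq\emptyset$, your route for (1)$\Rightarrow$(3) matches the paper's: transfer to the Albanese torsor as in \cref{prop:good section relation}, then the Chern class/Weil pairing identity of \cref{prop:coin}, then local Tate duality. Your theta-group computation would replace the paper's root-gerbe and Hochschild--Serre proof of \cref{thm:Chern vs Weil}.

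The gap is that (1)$\Rightarrow$(3) is never argued when $T=\Alb_X^1$ has no $K$-point. Excluding that case is the real content of the theorem, since it is what yields period $1$. A section of $\pi_1(T)\to G_K$ only forces $[T]$ to be divisible in $\HH^1(K,A)$. Since $\HH^1(K,A)$ is Pontryagin dual to $A^\vee(K)$, which contains $\Z_p^{[K:\Q_p]\dim A}$ with finite index, it has a nonzero divisible part, so nontrivial torsors with sections do exist. Your argument breaks on such $T$ in three ways:
\begin{itemize}
\item There is no geometric reference section. The ``term independent of $s$'' in your identity is then $s_0^\ast c_1(L)$ for a non-geometric $s_0$ and need not vanish. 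Orthogonality only says that your class pairs with $\kappa_n(\xi)$ to $-s_0^\ast c_1(L_\xi)$, not to $0$.
\item Orthogonality only tests against $\Pic^0(T)$, not all of $A^\vee(K)$. $\Pic^0(T)$ is the kernel of the Brauer obstruction $A^\vee(K)\to\Br(K)$, which up to sign is Tate pairing with $[T]$. So it is a proper subgroup when $[T]\neq 0$, and you tacitly replace it by all of $A^\vee(K)$.
\item ``Comes from a point of $T(K)$ modulo $n$'' and the compactness argument are empty when $T(K)=\emptyset$. Likewise, a class in $\HH^1(K,A[n])$ ``mapping to $[T]$ and to zero in $\HH^1(K,A)[n]$'' is contradictory unless $[T]=0$.
\end{itemize}

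What is missing is the paper's base-change-and-descent argument, in three steps.

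\textbf{Step 1.} Prove (1)$\Rightarrow$(2) directly (\cref{prop:uniform ortho}) rather than through (3). For finite $F/K$ and $L\in\Pic^0(X_F)$, the norm $N_{F/K}L$ lies in $\Pic^0(X)$, and $\mathrm{Cor}_{F/K}\bigl((s|_{G_F})^\ast c_1(L)\bigr)=s^\ast c_1(N_{F/K}L)=0$. Over $p$-adic fields $\mathrm{Cor}_{F/K}\colon\HH^2(F,\mu_n)\to\HH^2(K,\mu_n)$ is an isomorphism, so $(s|_{G_F})^\ast c_1(L)=0$.

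\textbf{Step 2.} Take a finite Galois $F/K$ with $T(F)\neq\emptyset$. Over $F$ one has $\Pic^0(T_F)=A^\vee(F)$ and a geometric base section exists. Your pointed argument, applied to $s^{\text{a}}|_{G_F}$, then shows it comes from some $x\in T(F)$.

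\textbf{Step 3.} Descend to $K$. Because $s^{\text{a}}$ is defined over $K$, the conjugacy class of $s^{\text{a}}|_{G_F}$ is $\mathrm{Gal}(F/K)$-invariant. The map $T(F)\to\mathscr{S}_T(F)$ is injective, since $A(F)\to\HH^1(F,\T A)$ is. Hence $x$ is Galois-invariant, so $x\in T(K)$. Finally, injectivity of $\mathscr{S}_T(K)\to\mathscr{S}_T(F)$ (\cref{prop:sheaf}; here it reduces to $\HH^1(\mathrm{Gal}(F/K),(\T A)^{G_F})=0$) identifies $s^{\text{a}}$ with the section of $x$.
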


\begin{cor}
    If \(X\) admits a section orthogonal to \(\Pic^0\), then the period of \(X\) is \(1\).
\end{cor}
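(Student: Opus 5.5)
The corollary should follow formally from \cref{thm:main 2} together with the standard fact that, over a \(p\)-adic field, a torsor under a semi-abelian variety that carries a geometric section in fact has a \(K\)-rational point. Here is the plan.

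Let \(s:G_K\to\pi_1(X)^{(\ab)}\) be a section orthogonal to \(\Pic^0\). (If the corollary is meant for smooth non-proper \(X\), I would first pass to the setting of \cref{thm:main 2}, or note that its proof only uses the Albanese map; for now I assume \(X\) is smooth proper, as in that theorem.) By the implication (1)\(\Rightarrow\)(3) of \cref{thm:main 2}, the induced section
\[s^{\mathrm{a}}:G_K\to\pi_1(\Alb_X^1)\]
is geometric. By definition, this means there is a point \(x\in\Alb_X^1(K)\) such that \(s^{\mathrm{a}}\) is conjugate to the section \(x_\ast:G_K=\pi_1(\operatorname{Spec}K)\to\pi_1(\Alb_X^1)\).

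In particular \(\Alb_X^1(K)\neq\emptyset\). A torsor under \(\Alb_X^0\) with a rational point is trivial, so \([\Alb_X^1]=0\) in \(\HH^1(K,\Alb_X^0)\). Hence its order, which is the period of \(X\), equals \(1\).

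The only point needing care is the meaning of ``geometric'' in \cref{thm:main 2}(3). If the paper defines it as ``arising from a rational point'', the argument above is complete. If instead it is defined more weakly, for instance as lying in the image of a Kummer-type map, or as ``coming from a point up to the \(\widehat{\cdot}\)-completion'', then I need an additional step. In that case I would argue as follows.

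A section of \(\pi_1(\Alb_X^1)\) gives a splitting of the extension of \(G_K\) by \(T\Alb_X^0\). Its existence kills the image of \([\Alb_X^1]\) under
\[\HH^1(K,\Alb_X^0)\to \HH^2(K,T\Alb_X^0),\]
which is the boundary map coming from the Kummer sequences \(0\to \Alb^0_X[n]\to\Alb^0_X\xrightarrow{n}\Alb^0_X\to 0\). For every \(n\), this forces \([\Alb_X^1]\in n\HH^1(K,\Alb_X^0)\).

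For a semi-abelian variety over a \(p\)-adic field, \(\HH^1(K,\Alb_X^0)\) has no nonzero divisible elements, and its torsion is finite. For the abelian quotient this is Tate duality, and for the torus part it follows from Hilbert 90 and local class field theory. Together these show that the class vanishes.

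I expect this last verification, that \(\HH^1\) has no divisible elements in the semi-abelian (non-abelian) case, to be the only real obstacle. I would handle it by using the extension \(0\to T\to \Alb_X^0\to B\to 0\) and reducing to the abelian variety \(B\) and the torus \(T\) separately.
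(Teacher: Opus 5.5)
Your main argument is correct and is how the corollary follows in the paper. The paper uses ``geometric'' to mean ``arising from a \(K\)-rational point'': see the remark after \cref{defn:Chern class}, and the descent step in the proof of \cref{cor:torsor}, which produces an actual point of \(\Alb_X^1(K)\). So (1)\(\Rightarrow\)(3) of \cref{thm:main 2} gives \(\Alb_X^1(K)\neq\emptyset\), and hence period \(1\). Also, since \(X\) is proper, \(\Alb_X^0\) is an abelian variety, so no torus part arises.

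You should, however, drop the fallback paragraph, because its key claim is false. Let \(B\) be an abelian variety of dimension \(g\) over a \(p\)-adic field \(K\). Local Tate duality makes \(\HH^1(K,B)\) the Pontryagin dual of \(B^t(K)\cong\Z_p^{g[K:\Q_p]}\times(\text{finite})\). Hence \(\HH^1(K,B)\cong(\Q_p/\Z_p)^{g[K:\Q_p]}\times(\text{finite})\): it has a nonzero divisible subgroup, and its torsion is infinite. The map \(\HH^1(K,B)\to\HH^2(K,\T B)\) factors through \(\HH^1(K,B)/n\) at each finite level, so it kills every divisible class. Consequently, any torsor \(P\) whose class is nonzero and divisible has a section of \(\pi_1(P)\to G_K\) but no rational point.

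So the mere existence of a section of \(\pi_1(\Alb_X^1)\to G_K\) does not force period \(1\). This is exactly why the orthogonality hypothesis is needed. In the paper it enters through \cref{thm:comp}, where pairing \([s-s_O]\) against the Kummer image of \(B^t(K)\) kills its component in \(\T\HH^1(K,B)\). The Galois descent of \cref{cor:torsor} then finishes the argument.
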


The case of curves is already contained in \cite{Sai21}.

The main step in the proof of \cref{thm:main 2} is to identify the Chern class obstruction with the obstruction arising from local Tate duality. Let \(X\) be a smooth proper variety over a \(p\)-adic field \(K\), and let \(s_1,s_2:G_K\to \pi_1(X)^{(\ab)}\) be two geometrically abelian sections. Local Tate duality gives a perfect pairing
\[
e:\HH^1(K,\pi_1(X_{\overline K})^{\ab}\otimes \Z/n)\times \HH^1(K,\Pic_X[n])\longrightarrow \Br(K)[n].
\]
Let \(L\) be a line bundle on \(X\) such that \(c_1(L_{\overline K})\in \HH^2(X_{\overline K},\mu_n)\) vanishes. We show that the pullbacks \(s_i^\ast c_1(L)\in \Br(K)[n]\), \(i=1,2\), are well defined and satisfy
\[
s_1^\ast c_1(L)-s_2^\ast c_1(L)=e(\delta L\cup [s_1-s_2]).
\]
This comparison is established in \cref{sec:weil}, in particular in \cref{thm:Chern vs Weil}. Related computations of \(s^\ast c_1(L)\) already appear in the literature; see, for example, \cite{EW09}. The point of our formulation is that it identifies the Chern class obstruction directly with the pairing appearing in local Tate duality. In addition, the argument does not require \(X\) to be a \(K(\pi,1)\) and depends only on the geometrically abelian section, two features that are crucial for the applications below. The structure of the argument of the paper can be summarized as follows. The conditions \textup{(VP0)} and \textup{(VP1)} force the vanishing of the relevant Chern class obstructions (cf. \cref{lem:albanese lifting}). By \cref{thm:Chern vs Weil}, these obstructions are identified with the corresponding Weil pairings. Pairing against \(\Pic^0\) controls the geometricity of the induced section on \(\Alb_X^1\), as in \cref{thm:main 2}, while pairing against the N\'eron--Severi group conjecturally detects whether the resulting Albanese point is represented by a \(0\)-cycle, as in \cref{thm:albanese coker}, and also controls the vanishing of the relative Brauer group, as in \cref{thm:newmain}.

We now describe three consequences of this picture, concerning the index and the relative Brauer group, \(0\)-cycles of degree \(1\), and the generic period. The first consequence concerns the index. While an analogue of \cref{thm:main 2} for the index appears to be substantially harder in general, the same Chern class obstructions control the relative Brauer group, and hence the discrepancy between the period and the index for curves.

\begin{thm}\label{thm:newmain}
    Let \(K\) be a \(p\)-adic field or a finitely generated field, and let \(X\) be a smooth proper variety over \(K\). If there exists an open subvariety \(U\) satisfying \textup{(VP1)} and a section \(s_U:G_K\to\pi_1(U)^{(\ab)}\), then
    \[\Br(X/K)\text{ is }\begin{cases}
        \text{ trivial} & \text{if }\operatorname{char}K=0,\\
        l\text{-primary} & \text{if }\operatorname{char}K=l.
    \end{cases}\]
    In particular, if \(X\) is a curve, then the same holds for the ratio of the index to the period.
\end{thm}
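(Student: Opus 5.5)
The argument runs through the Hochschild–Serre description of the relative Brauer group. Since \(X\) is smooth and proper, hence geometrically integral after the standing conventions, there is an exact sequence
\[
\Pic(X)\longrightarrow \Pic_X(K)\longrightarrow \Br(K)\longrightarrow \Br(X),
\]
so \(\Br(X/K)=\ker(\Br(K)\to\Br(X))\) equals the cokernel of \(\Pic(X)\to\Pic_X(K)\). An element \(\alpha\in\Br(X/K)\) is therefore the image \(\partial\lambda\) of some \(\lambda\in\Pic_X(K)=\Pic(X_{K^{s}})^{G_K}\). Fix such an \(\alpha\) and an integer \(n\) prime to \(\operatorname{char}K\) with \(n\alpha=0\). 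It suffices to show that the \(n\)-torsion part of \(\Br(X/K)\) vanishes for every such \(n\). In characteristic \(0\) this gives triviality, since \(\Br(X/K)\) is torsion (it is killed by the index). In characteristic \(l\) it gives \(l\)-primary torsion.

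The key step is to interpret \(\partial\lambda\) as a Chern class obstruction. Take \(\lambda\) and its image in \(\HH^2(X_{K^s},\mu_n)^{G_K}\) via the Kummer sequence. The obstruction to descending \(c_1(\lambda)\) to a class in \(\HH^2(X,\mu_n)\) is measured, through the Hochschild–Serre spectral sequence, by \(d_2\) and \(d_3\) differentials. The \(d_3\) differential to \(\HH^3(K,\mu_n)\) vanishes for \(p\)-adic fields, and in general it is absorbed by working on \(U\). Concretely, I would try to show that for a geometrically abelian section \(s\) there is a well-defined pullback \(s^\ast c_1(\lambda)\in\Br(K)[n]\), the same construction as in \cref{defn:Chern class}, applied to Galois-invariant geometric classes via the extension \(\pi_1(X)^{(\ab)}\). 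Equivalently, one can view \(\lambda\) as defining a \(\mu_n\)-extension of \(\pi_1(X_{\bar K})^{\ab}\) that is \(G_K\)-invariant. The identity to establish is \(s^\ast c_1(\lambda)=\pm\partial\lambda\), where both sides measure the failure of \(\lambda\) to come from a line bundle. Compatibility with \cref{prop:F1 comparison} should make this a formal diagram chase: if \(\lambda\) is represented by a genuine line bundle, both sides agree with the usual Chern class obstruction, and the difference between them is a homomorphism in \(\lambda\).

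Next I pass to \(U\). Restrict \(\lambda\) to \(\Pic_U(K)\); via \(\pi_1(U)^{(\ab)}\to\pi_1(X)^{(\ab)}\), the pushforward of \(s_U\) gives a section \(s\) of \(X\) with \(s^\ast c_1(\lambda)=s_U^\ast c_1(\lambda|_U)\). By \cref{lem:albanese lifting}, condition (VP1) forces the vanishing of the relevant obstructions. The idea is that \(\widehat{\Pic(U)}=0\) means \(\lambda|_U\) is, up to an \(n\)-divisible error, trivial geometrically in \(\HH^2(U_{K^s},\mu_n)\). More precisely, the class \(c_1(\lambda|_U)\) dies in \(\HH^2(U_{\bar K},\mu_n)\), so it already lives in the Hochschild–Serre filtration piece coming from \(\HH^1(K,\HH^1(U_{\bar K},\mu_n))\) and \(\Br(K)\). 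Pulling back by \(s_U\) then kills it, because the \(\HH^1\) part is detected by the abelianized geometric fundamental group and is split by \(s_U\). Hence \(\partial\lambda=0\) in \(\Br(K)[n]\), so \(\alpha=0\). For curves, \(\Br(X/K)\) is cyclic of order index\(/\)period (Lichtenbaum for \(p\)-adic fields, and the general period–index relation via \(\Pic_X(K)/\Pic(X)\) in the finitely generated case), which gives the final statement.

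The main obstacle is the second step: defining \(s^\ast c_1\) for a Galois-invariant class \(\lambda\) that is not a line bundle, and proving it equals \(\partial\lambda\) using only the geometrically abelian section. My first attempt would be to work in \(\HH^2(\pi_1(U)^{(\ab)},\mu_n)\) using the \(\mu_n\)-torsor (a Kummer cover) attached to \(\lambda|_U\). This torsor exists geometrically because of (VP1), and its Galois-descent obstruction is exactly \(\partial\lambda\). I would then check directly via cocycles that \(s_U\) pulls back this obstruction to \(\partial\lambda\) while simultaneously trivializing it.
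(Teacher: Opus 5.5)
Your proposal has a genuine gap in both of its key steps.

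\textbf{The identification step.} The identity $s^\ast c_1(\lambda)=\pm\partial\lambda$ is not meaningful for a general $\lambda\in\Pic(X_{\overline K})^{G_K}$. Unless $\lambda$ is $n$-divisible in $\Pic(X_{\overline K})$ and descends to $X$, the class $c_1(\lambda)$ is only an element of $\HH^2(X_{\overline K},\mu_n)^{G_K}$. It does not lie in any $F^1\HH^2$ on which $s^\ast$ is defined.

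Your sanity check also fails. If $\lambda=L$ is a genuine line bundle, then $\partial L=0$. But $s^\ast c_{1,n}(L)$ is in general nonzero: by \cref{thm:Chern vs Weil} it changes with $s$ by $e(\delta L\cup[s-s_O])$. So your identity would force every section to be orthogonal to $\Pic$.

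The correct bridge needs no $\lambda$ at all. Take $\alpha\in\HH^2(K,\mu_n)=\Br(K)[n]$ dying in $\Br(X)$. Its image in $\HH^2(X,\mu_n)$ maps to zero in $\Br(X)$. By the Kummer sequence $0\to\Pic(X)/n\to\HH^2(X,\mu_n)\to\Br(X)[n]\to 0$, this image equals $c_{1,n}(L)$ for some $L\in\Pic(X)$ with $c_{1,n}(L_{\overline K})=0$. Any section retracts $\HH^2(K,\mu_n)\to\HH^2(\pi_1(X)^{(\ab)},\mu_n)$, so $s^\ast c_{1,n}(L)=\alpha$. Everything therefore reduces to showing that the induced section $s$ on $X$ is orthogonal to $\Pic$; this is \cref{thm:relative brauer}.

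\textbf{The vanishing step.} Your argument that this obstruction dies fails on two counts.

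First, (VP1) concerns $\Pic(U)$, i.e.\ line bundles defined over $K$. It does not make $\lambda|_U$ geometrically $n$-divisible. For a counterexample, let $X$ be a Severi--Brauer variety of dimension $\ge 2$ whose class has period $m>1$. Let $D$ be a geometrically irreducible hypersurface of degree $m$ defined over $K$; it exists by Bertini, since $\mathcal{O}(m)$ descends to $X$. Then $\Pic(U)=m\Z/m\Z=0$, yet the hyperplane class generates $\Pic(U_{\overline K})\simeq\Z/m$. Note also that $\lambda|_U$ is not a line bundle on $U$, so $c_1(\lambda|_U)$ is not a class in $\HH^2(U,\mu_n)$.

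Second, even for a genuine class in $F^1\HH^2(U,\mu_n)$, pulling back along $s_U$ does not kill its $\HH^1(K,\HH^1(U_{\overline K},\mu_n))$-component. \cref{thm:Chern vs Weil} computes exactly this contribution as a Weil pairing against the section, which is typically nonzero.

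What actually kills the obstruction is the following. A divisor $E$ supported on $D=X\setminus U$ is trivial on $U$ as a $K$-line bundle, so $s^\ast c_{1,n}(E)=s_U^\ast c_{1,n}(E|_U)=0$ by \cref{lem:albanese lifting}(1). Via \cref{lem:Picard_vaishing}, (VP1) gives $\Pic(X)=\langle D\rangle+n\Pic(X)$. Writing $L=E+nM$ with $E\in\langle D\rangle$ (so automatically $c_{1,n}(E_{\overline K})=0$), you get $s^\ast c_{1,n}(L)=s^\ast c_{1,n}(E)=0$.

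Your reduction to prime-to-characteristic torsion and your final deduction for curves are fine.
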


A rational version of this statement was proved in \cite{EW10}; see also \cite{Wit08}, whereas our proof is independent of these results.

The second consequence concerns whether the Albanese point produced by \cref{thm:main 2} comes from a \(0\)-cycle of degree \(1\) on \(X\). Assume that \(X\) has index \(1\). By \cref{thm:main 2}, a section orthogonal to \(\Pic^0\) determines a rational point on \(\Alb_X^1\), and it is natural to ask whether this point is represented by a \(0\)-cycle of degree \(1\). This amounts to understanding the cokernel of the Albanese map \(\operatorname{CH}_0(X)^0\to\Alb_X^0(K)\). Over a \(p\)-adic field, Kai \cite{Kai16} proposed the following description of the cokernel of the Albanese map as a higher-dimensional analogue of Lichtenbaum duality for curves:
\[\textup{(K)}\ \operatorname{coker}\left(\operatorname{CH}_0(X)^0\to\Alb_X^0(K)\right)\text{ is Pontryagin dual to }\operatorname{coker}\left(\Pic(X_{\overline{K}})^{G_K}\to\operatorname{NS}(X_{\overline{K}})^{G_K}\right),\]
and proved it in several special cases.

\begin{thm}\label{thm:albanese coker}
Let \(X\) be a smooth projective variety of index \(1\) over a \(p\)-adic field. Then \textup{(K)} is equivalent to the following statement: the points of \(\Alb_X^1(K)\) represented by \(0\)-cycles of degree \(1\) are precisely those arising from sections of \(\pi_1(X)^{(\ab)}\to G_K\) that are orthogonal to \(\Pic\).
\end{thm}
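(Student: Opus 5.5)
The plan is to fix a base section coming from a $0$-cycle of degree $1$ and translate everything into Tate duality for the abelian variety $\Alb_X^0$. Since $X$ is proper, $\Alb_X^0$ is abelian. Since $X$ has index $1$, choose a $0$-cycle $z$ of degree $1$; it gives a point $a(z)\in\Alb_X^1(K)$. Via norms/corestriction from the residue fields of its points, it should also give a section $s_z:G_K\to\pi_1(X)^{(\ab)}$ whose Albanese section is the geometric section at $a(z)$. First I will check that $s_z$ is orthogonal to $\Pic$. On each closed point line bundles are trivial, and the Chern class obstruction is compatible with corestriction, so this follows. The same argument shows that every $0$-cycle of degree $1$ gives a Pic-orthogonal section. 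Hence the inclusion ``points from $0$-cycles $\subseteq$ points from Pic-orthogonal sections'' holds unconditionally. Translating by $a(z)$ identifies $\Alb_X^1(K)\cong\Alb_X^0(K)$, and the points from $0$-cycles become the image $I$ of $\operatorname{CH}_0(X)^0$.

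Next I will describe the Pic-orthogonal points. Let $s$ be any section orthogonal to $\Pic^0$; by \cref{thm:main 2} its Albanese section is geometric at some point $a(z)+x$ with $x\in\Alb_X^0(K)$. For each $n$, the class $[s-s_z]\in\HH^1(K,\pi_1(X_{\overline K})^{\ab}\otimes\Z/n)$ maps to the Kummer class of $x$ in $\HH^1(K,\Alb_X^0[n])$. Apply \cref{thm:Chern vs Weil} to a line bundle $L$ with $c_1(L_{\overline K})=0$ mod $n$. Since $s_z$ is Pic-orthogonal, we get
\[s^\ast c_1(L)=e(\delta L\cup[s-s_z]).\]
The right-hand side should be the local Tate pairing $\langle x,\partial\bar L\rangle$. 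Here $\bar L$ is the class of $L$ in $\operatorname{NS}(X_{\overline K})^{G_K}$ and
\[\partial:\operatorname{coker}\bigl(\Pic(X_{\overline K})^{G_K}\to\operatorname{NS}(X_{\overline K})^{G_K}\bigr)\hookrightarrow\HH^1(K,\Pic^0_X)\]
is the connecting map. Letting $n$ and $L$ vary, $s$ is orthogonal to $\Pic$ if and only if $x$ lies in the annihilator $\operatorname{Ann}(\partial C)\subseteq\Alb_X^0(K)$ of $\partial C$, where $C$ is this cokernel. By Tate duality for $\Alb_X^0$ and its dual $\Pic^0_X$, the quotient $\Alb_X^0(K)/\operatorname{Ann}(\partial C)$ is Pontryagin dual to $C$, which is finite.

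The converse direction needs care: every $x\in\operatorname{Ann}(\partial C)$ must come from some Pic-orthogonal section. I will try to lift the Kummer classes of $x$ along
\[\HH^1(K,\pi_1(X_{\overline K})^{\ab})\to\HH^1(K,T\Alb_X^0),\]
then twist $s_z$ by the lift. The kernel of $\pi_1(X_{\overline K})^{\ab}\to T\Alb_X^0$ is a finite group dual to $\operatorname{NS}_{\mathrm{tors}}$. Since $\operatorname{cd}K=2$, the obstruction to lifting lies in $\HH^2$ of this finite group. I expect to show it vanishes using exactly the pairing with torsion classes of $\operatorname{NS}$ that the annihilator condition controls. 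This lifting step, together with making the sign and normalization of $e$ match the Tate pairing $\langle x,\partial\bar L\rangle$, is the main obstacle.

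Granting it, the Pic-orthogonal points are exactly $a(z)+\operatorname{Ann}(\partial C)$, and $I\subseteq\operatorname{Ann}(\partial C)$. So the statement in the theorem is equivalent to $I=\operatorname{Ann}(\partial C)$. Assume (K). Then $\operatorname{coker}(\operatorname{CH}_0(X)^0\to\Alb_X^0(K))$ has the same finite order as $C$, and hence as $\Alb_X^0(K)/\operatorname{Ann}(\partial C)$; together with $I\subseteq\operatorname{Ann}(\partial C)$ this forces equality. Conversely, if $I=\operatorname{Ann}(\partial C)$, the cokernel is $\Alb_X^0(K)/\operatorname{Ann}(\partial C)$, which is dual to $C$ by the previous paragraph, and this is (K).
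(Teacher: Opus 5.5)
Much of your outline agrees with the paper. This includes the base section $s_z$, obtained by corestriction and shown orthogonal to $\Pic$ through \cref{thm:Chern vs Weil}. It also includes the identification of Chern class obstructions with the Tate pairing of $x$ against $\partial C\subseteq\HH^1(K,\Pic^0_X)$, which is the formula of Step~2. Finally, it includes the lifting obstruction in $\HH^2(K,N)$, where $N=\ker\bigl(\pi_1(X_{\overline K})^{\ab}\to\pi_1(\Alb^1_{X,\overline K})^{\ab}\bigr)$, killed by duality with $\operatorname{NS}(X_{\overline K})^{G_K}_{\mathrm{tors}}$ (Step~4). Your closing count using finiteness of $C$ is a fine substitute for the paper's reformulation of (K).

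The gap is the claim that a section $s$ orthogonal to $\Pic^0$ with Albanese point $a(z)+x$ is orthogonal to $\Pic$ if and only if $x\in\operatorname{Ann}(\partial C)$. (A smaller slip: for $L\in\Pic(X)$ with $L_{\overline K}=M^{\otimes n}$, the relevant N\'eron--Severi class is that of $M$, not of $L$; the class of $L$ has trivial image in $C$.) The class of $M$ is only $G_K$-invariant modulo $\operatorname{NS}(X_{\overline K})[n]$, so $\delta L$ need not come from $\HH^1(K,\Pic^0_X[n])$. In that case $e(\delta L\cup[s-s_z])$ is not a function of $x$ alone. Indeed, two lifts of the same Albanese section differ by a class coming from $\HH^1(K,N)$. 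By \cref{thm:Chern vs Weil}, this changes $s^\ast c_1(L)$ by the Tate pairing of that class with the image of $\delta L$ in $\HH^1(K,\operatorname{NS}(X_{\overline K})_{\mathrm{tors}})$, and nothing forces this to vanish. So your argument does give ``orthogonal to $\Pic$ $\Rightarrow$ $x\in\operatorname{Ann}(\partial C)$''. For this you test only line bundles whose $n$-th root has invariant N\'eron--Severi class, built as in Step~2 from $\Pic(X)=\Pic(X_{\overline K})^{G_K}$, which holds because index $1$ forces $\Br(X/K)=0$. But an arbitrary lift of the Kummer class of $x\in\operatorname{Ann}(\partial C)$ need not be orthogonal to $\Pic$.

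What is missing is the paper's Step~5. After choosing a lift $s_y$, show that $s_y^\ast c_1(-)$ induces a well-defined homomorphism from the image of $F^1_{\Pic}$ in $\HH^1(K,\operatorname{NS}(X_{\overline K})_{\mathrm{tors}})$ to $\Br(K)$. Well-definedness is where $x\in\operatorname{Ann}(\partial C)$ is used a second time. If $\delta L$ comes from $\HH^1(K,\Pic^0(X_{\overline K})_{\mathrm{tors}})$, pass to a common torsion level $r=kn$ and apply the Step~2 formula to $L^{\otimes k}$. Then use injectivity of $\HH^2(K,\mu_n)\to\HH^2(K,\mu_r)$. Next, extend this homomorphism to all of $\HH^1(K,\operatorname{NS}(X_{\overline K})_{\mathrm{tors}})$, using that $\Br(K)\simeq\Q/\Z$ is injective. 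Realize the extension by a class $t\in\HH^1(K,N)$ via local Tate duality, and replace $s_y$ by $s_y-t$, exactly as in the proof of \cref{prop: Brauer converse}. Without this correction, your argument is complete only when $\operatorname{NS}(X_{\overline K})$ is torsion-free; then $N=0$ and the lift is unique.
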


Since Kai's conjecture is known for curves and in the setting of \cite[Theorem 1.1]{Kai16}, the following is immediate from \cref{thm:albanese coker}, \cref{cor:period index} and \cref{lem:albanese lifting,lem:Picard_vaishing}.

\begin{cor}\label{cor:albanes coker}
    Let \(X\) be a smooth proper variety over a \(p\)-adic field \(K\). Assume that either \textup{(1)} \(X\) is a curve, or \textup{(2)} \(X\) has a smooth projective model over \(\mathcal{O}_K\) with a smooth Picard scheme and has index \(1\). Let \(U\subseteq X\) be an open subset satisfying \textup{(VP1)}. If there exists a section \(s_U:G_K\to\pi_1(U)^{(\ab)}\), then \(X\) has index \(1\), and the induced point of \(\Alb_X^1\) is represented by a \(0\)-cycle of degree \(1\).
\end{cor}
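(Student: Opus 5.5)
The proof chains earlier results; the text says it is immediate from \cref{thm:albanese coker}, \cref{cor:period index}, \cref{lem:albanese lifting} and \cref{lem:Picard_vaishing}. The plan is to obtain index \(1\), then an Albanese point, and finally to invoke (K).

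\textbf{Step 1: push the section to \(X\) and kill the Chern class obstructions.} Compose \(s_U\) with \(\pi_1(U)^{(\ab)}\to\pi_1(X)^{(\ab)}\) to get a section \(s\) of \(\pi_1(X)^{(\ab)}\). Since \(U\) satisfies \textup{(VP1)}, \cref{lem:albanese lifting} shows that \(s\) is orthogonal to \(\Pic\). The reason is that for a line bundle \(L\) on \(X\), \(c_1(L|_U)\) is divisible in \(\HH^2(U,\mu_n)\), because \(L|_U\) is trivial in \(\widehat{\Pic(U)}\), so its pullback along \(s_U\) vanishes. The same holds after any finite extension, since \textup{(VP1)} is preserved or handled in that lemma. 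Consequently \(s\) is in particular orthogonal to \(\Pic^0\). By \cref{thm:main 2}, the induced section \(s^{\mathrm a}\) of \(\pi_1(\Alb_X^1)\) is geometric, so it gives a point \(x\in\Alb_X^1(K)\) and the period is \(1\).

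\textbf{Step 2: index \(1\).} In case (2) index \(1\) is a hypothesis. In case (1), \cref{thm:newmain} gives that \(\Br(X/K)\) vanishes, since \(\operatorname{char}K=0\). For curves over \(p\)-adic fields, index divided by period is controlled by \(\Br(X/K)\); this is \cref{cor:period index}, via Lichtenbaum. With period \(1\) from Step 1, this yields index \(1\).

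\textbf{Step 3: representability by a \(0\)-cycle.} Now \(X\) is smooth projective of index \(1\); a curve is projective. I apply \cref{thm:albanese coker} in the direction: (K) implies that points arising from sections orthogonal to \(\Pic\) are represented by \(0\)-cycles of degree \(1\). Conjecture (K) holds for curves by Lichtenbaum--Tate duality. Under hypothesis (2) it holds by \cite[Theorem 1.1]{Kai16}, with \cref{lem:Picard_vaishing} supplying whatever vanishing of \(\operatorname{coker}(\Pic(X_{\overline K})^{G_K}\to \operatorname{NS})\) or related input that theorem requires. Since \(s\) is orthogonal to \(\Pic\) by Step 1, \(x\) is represented by a \(0\)-cycle of degree \(1\).

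\textbf{Main obstacle.} The only delicate point is matching hypotheses. \cref{thm:albanese coker} requires orthogonality to the full \(\Pic\) for the section on \(X\), not just on \(U\). It may also require this uniformly over extensions, to ensure the correct point lies in the image. One must therefore check carefully that \cref{lem:albanese lifting} delivers exactly the form of orthogonality used there. One must also check that Kai's hypotheses in case (2) match, possibly using \cref{lem:Picard_vaishing} to identify the relevant cokernel.
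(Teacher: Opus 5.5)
Your proposal is correct and follows essentially the paper's route: (VP1) makes the induced section on \(X\) orthogonal to \(\Pic\), then \cref{thm:main 2} and \cref{cor:period index} give period and index \(1\), and the forward direction of \cref{thm:albanese coker} finishes, with (K) known for curves and in the setting of \cite[Theorem 1.1]{Kai16}; your observation that \(\Pic(U)=n\Pic(U)\) forces \(c_{1,n}(L|_U)=0\) is a clean shortcut for the decomposition-and-density argument in the proof of \cref{cor:period index}. The worries in your last paragraph are unfounded: \cref{thm:main 2} needs orthogonality only over \(K\) itself, so drop the claim that (VP1) survives base change, which it need not; and hypothesis (2) is exactly Kai's setting, so \cref{lem:Picard_vaishing} plays no role in verifying (K).
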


Finally, the third consequence relates geometrically abelian sections to the generic period. Recall that the generic period of \(X/K\) is the supremum of the periods of its nonempty open subsets. A theorem of Esnault-Wittenberg and Wittenberg characterizes generic period \(1\) in terms of an abelianized birational section. We obtain an integral refinement of this characterization, in which the birational section is replaced by a section over a suitable open subset. Moreover, our proof is independent of the arguments in \cite{EW10,Wit08}.

\begin{thm}\label{thm:main3}
    Let \(X\) be a smooth proper variety over a \(p\)-adic field \(K\). Fix a finite Galois extension \(F/K\) such that \(X(F)\ne\emptyset\) and \(\text{NS}(X_F)=\text{NS}(X_{\widebar{K}})\), and choose an open subset \(U\subseteq X\) such that \(U_F\) satisfies \textup{(VP1)}. Then there exists a section of \(\pi_1(U)^{(\ab)}\to G_K\) if and only if the generic period of \(X\) is \(1\). If \(X\) is a curve, this is equivalent to \(X\) having index \(1\).
\end{thm}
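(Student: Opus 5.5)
We prove the two implications separately. The forward direction comes from the Chern class obstruction machinery applied after base change to \(F\). The converse comes from building a section over \(U\) out of the assumption that the generic period is \(1\).

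\emph{Forward direction.} Suppose \(s_U:G_K\to\pi_1(U)^{(\ab)}\) is a section, and let \(V\subseteq U\) be any nonempty open subset. We must show that \(\Alb_V^1\) is trivial.

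1. Restrict \(s_U\) to \(G_F\). This gives a section of \(\pi_1(U_F)^{(\ab)}\). Since \(U_F\) satisfies \textup{(VP1)}, \cref{lem:albanese lifting} shows that this section is orthogonal to \(\Pic\) of \(U_F\). Moreover, by the comparison \cref{thm:Chern vs Weil}, it is orthogonal to the classes coming from \(\operatorname{NS}(X_F)=\operatorname{NS}(X_{\bar K})\).

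2. Pass to \(V\). The open subvariety \(V\) differs from \(U\) by divisors. Because \(X(F)\neq\emptyset\) and \(\operatorname{NS}\) is already defined over \(F\), every geometric divisor class on \(X\) is represented over \(F\).

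3. Relate the Albanese torsor of \(V\) to that of \(U\). The semi-abelian variety \(\Alb_V^0\) is an extension of \(\Alb_X^0\) by a torus whose character group is the lattice of divisors supported on \(X\setminus V\), modulo principal ones. The class \([\Alb_V^1]\) then decomposes into two pieces. The first is the class of \(\Alb_X^1\), which is killed by \cref{thm:main 2}. The second is a toric part, which lives in \(\HH^1(K,T)\) and is controlled via Hilbert 90 and the Chern classes of the boundary divisors.

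4. Kill the toric part. We use the Weil pairing identity against \(\Pic\): orthogonality to \(\Pic\) makes the pairing vanish, and local Tate duality is perfect, so the obstruction vanishes.

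5. Descend from \(F\) to \(K\). The obstructions are controlled over \(F\), but the section is defined over \(K\). Since \(F/K\) is Galois, a restriction–corestriction argument only gives the conclusion up to \([F:K]\)-torsion. Instead we use that the section itself is defined over \(K\) and apply \cref{thm:main 2} uniformly. The key point is that \(s_U\) is uniformly orthogonal to \(\Pic\).

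\emph{Converse.} Assume the generic period is \(1\). Then \(\Alb^1_U\) has a rational point, which gives a section of \(\pi_1(\Alb_U^1)\). The main difficulty is lifting this to \(\pi_1(U)^{(\ab)}\). The geometric map \(\pi_1(U_{\bar K})^{\ab}\to\pi_1(\Alb_{U,\bar K}^0)\) is an isomorphism modulo torsion of \(\operatorname{NS}\) type. The condition \textup{(VP1)} over \(F\) makes the remaining discrepancy vanish, because the relevant kernel is dual to \(\widehat{\Pic(U_F)}=0\). Hence the extensions agree, and the section lifts.

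For curves, \(\operatorname{Br}(X/K)\) measures the ratio of the index to the period, so combining the above with \cref{thm:newmain} gives the equivalence with index \(1\).

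\emph{Main obstacle.} I expect the hardest step to be the exact identification of \(\pi_1(U)^{(\ab)}\) with the pullback of \(\pi_1(\Alb_U^1)\) under \textup{(VP1)} over \(F\). The remaining work is Galois descent of that identification from \(F\) to \(K\). I would first attempt both by comparing the Kummer sequences for \(\Pic(U_F)\).
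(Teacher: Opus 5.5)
\textbf{Converse direction.} Your converse direction is the paper's argument. Because \(\widehat{\Pic(U_F)}=0\) and \(\operatorname{NS}(X_F)=\operatorname{NS}(X_{\overline K})\), the divisors supported on \(X_{\overline K}\setminus U_{\overline K}\) generate \(\operatorname{NS}(X_{\overline K})\). Then \cref{thm:albanese}~(4) gives \(\pi_1(U)^{(\ab)}\simeq\pi_1(\Alb_U^1)\) directly. That hypothesis is geometric, so no descent from \(F\) to \(K\) is needed there. The step you flag as the main obstacle is therefore not where the difficulty lies.

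\textbf{The gap: forward direction, steps 3--5.} The only orthogonality you actually have is over \(F\). \Cref{lem:albanese lifting}~(1) and \cref{lem:Picard_vaishing} show that \(s|_{G_F}\) is orthogonal to \(\Pic(X_F)\). As you observe, restriction to \(G_F\) only determines \(s^\ast c_1(L)\in\Br(K)\) up to \([F:K]\)-torsion.

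Step 5 then simply asserts that \(s_U\) is uniformly orthogonal to \(\Pic\) over \(K\). This is true a posteriori: every section over \(U\) extends to a birational section, which is orthogonal to \(\Pic\) by \cref{lem:albanese lifting}~(1). But proving that extension is the whole content of the forward direction, so the assertion is unsupported. The same issue already affects your use of \cref{thm:main 2} over \(K\) for the \(\Alb_X^1\)-part, which needs \(\Pic^0\)-orthogonality over \(K\).

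Steps 3--4 also give no mechanism even granting orthogonality. The torus \(T_V=\ker(\Alb_V^0\to\Alb_X^0)\) has character module \(\mathrm{Div}^0_{X\setminus V}(X_{\overline K})\): algebraically trivial boundary divisors, not divisors modulo principal ones. This is in general not a permutation module, so Hilbert 90 gives nothing. The perfect pairing on \(\HH^1(K,T_V)\) is Tate--Nakayama duality against \(\HH^1(K,\mathrm{Div}^0_{X\setminus V}(X_{\overline K}))\). You give no identification of those pairings with Chern class obstructions of \(s\).

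\textbf{How the paper closes this.} The paper's route needs only \(\Pic^0\)-orthogonality over \(K\), and it obtains this without restriction--corestriction. There are three steps.
\begin{enumerate}
\item Geometricity of \(s^{\mathrm a}|_{G_F}\) descends to \(K\) because \(\mathscr{S}_{\Alb_X^1}\) is an \'etale sheaf (\cref{prop:sheaf}, as in \cref{thm:genmain}). Hence \(s\) is orthogonal to \(\Pic^0\), and by \cref{prop:uniform ortho} uniformly so.
\item \Cref{lem:albanese lifting}~(2) lifts \(s_U\) to \(s_V\) for every open \(V\subseteq U\). The obstruction lies in \(\HH^2(K,T[n])\) for the \emph{quasi-split} torus \(T=\ker(\Alb_V^0\to\Alb_U^0)\). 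Shapiro's lemma splits it into Chern class obstructions of line bundles \(L_j^1\) over fields \(F_j\). A positive power of each \(L_j^1\) is pulled back from an algebraically trivial bundle on \(\Alb^1_{X,F_j}\) by \cref{lem:albanese line bundle 2}; this is exactly where ``the boundary of \(U\) generates \(\operatorname{NS}(X_{\overline K})\)'' enters. Torsion-freeness of \(\HH^2(F_j,\widehat{\Z}(1))\) then removes the power.
\item The fiber \(P\) of \(\Alb_V^1\to\Alb_X^1\) over the point given by \(s^{\mathrm a}\) is a \(T_V\)-torsor whose fundamental exact sequence splits. So \(P\) is trivial by \cref{lem:torus}, using finiteness of \(\HH^1(K,T_V)\) and the Kummer sequence, rather than Hilbert 90 or a pairing argument.
\end{enumerate}

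\textbf{Curve clause.} \Cref{thm:newmain} requires (VP1) for \(U\) itself over \(K\), not for \(U_F\), so it does not apply as stated. You could instead use the birational section produced by the forward direction together with \cref{cor:period index}~(1). The implication index \(1\Rightarrow\) generic period \(1\) (moving a degree-one \(0\)-cycle off \(X\setminus V\)) is not addressed.
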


More broadly, these results suggest a systematic approach to studying the arithmetic information encoded by sections of fundamental groups. One may vary both the open subvariety and the quotient of its geometric fundamental group, and ask which arithmetic obstructions become detectable at each stage. The present paper investigates this question at the geometrically abelian level, where Chern classes and arithmetic duality provide a common framework for understanding Albanese torsors, relative Brauer groups, and zero-cycles. A natural direction for future research is to extend this framework to geometrically nilpotent quotients and investigate what further arithmetic information they can detect. We regard the results of this paper as a first step toward this broader problem.

Finally, we describe the organization of the paper. In \cref{sec:prelim}, we review basic results on the Albanese torsor and the Albanese variety of a smooth variety. \Cref{sec:weil} contains the technical core of the paper, where we prove that the obstructions arising from local Tate duality coincide with the Chern class obstructions. Using this, we prove \cref{thm:main 2} in \cref{sec:section}. In \cref{sec:brauer}, we study the relation between Chern class obstructions and the relative Brauer group and prove \cref{thm:newmain}. \cref{sec:proof} proves \cref{thm:main,thm:main3} using the relation between liftability of a section and Chern class obstructions established in \cref{lem:albanese lifting}. Finally, in \cref{sec:albanese}, we relate the remaining obstruction to the Albanese cokernel and Kai's paper \cite{Kai16}.

\subsection{Notations and Conventions}\label{subsec:notation}

In this paper, a variety is always assumed to be geometrically integral, and \(\widebar{K}\) always denotes the separable closure of \(K\). For a \(K\)-scheme \(X\), \(\pi_1(X)\) denotes the étale fundamental group, \(\Pic_X\) and \(\Pic_X^0\) denote the Picard scheme and its identity component, and \(\Pic(X)\) and \(\Pic^0(X)\) denote the Picard group and the subgroup consisting of classes contained in \(\Pic_X^0\). Note that \( \Pic^0(X)\subseteq \Pic_X^0(K) \) need not be an equality. For a line bundle \(L\), we denote its Chern class with \(\mu_n\)-coefficients in \(\HH^2_{\acute et}(X,\mu_n)\) by \(c_{1,n}(L)\). When there is no risk of confusion, we simply write \(c_1(L)\). We write \(G_K\) for the absolute Galois group. Recall that \(\pi_1(X)^{(\text{ab})}\) is the geometrically abelian étale fundamental group, i.e. the extension
\[ 1\to \pi_1(X_{\bar{K}})^{\text{ab}} \to \pi_1(X)^{(\text{ab})}\to G_K\to 1, \]
and \(\pi_1(X)^{(\Z/n)} \) for the geometrically \(n\)-torsion abelian étale fundamental group, i.e. the extension
\[ 1\to \pi_1(X_{\bar{K}})^{\text{ab}}/n \to\pi_1(X)^{(\Z/n)} \to G_K\to 1. \]
For an abelian variety \(A\), \(A^t\) denotes the dual abelian variety. Finally, \(\HH^i\) denotes profinite group cohomology, and \(\HH^i_{\acute et}\) denotes étale cohomology. For a field, we will denote both étale cohomology and group cohomology by \( \HH^i(K, \mathcal{F})\).

\section*{Acknowledgements}
The author would like to thank his advisor, Florian Pop, for his continued support and many helpful discussions. He is also grateful to Danny Krashen for his help with the proof of \cref{prop:chern class}, and to Jakob Stix for his valuable comments on a preliminary version of this paper.

\section{Albanese variety and torsor}\label{sec:prelim}

Here we summarize several well-known results on Albanese varieties, Albanese torsors, and their relation to the Picard scheme. We also present \cref{lem:albanese line bundle 1} and \cref{lem:albanese line bundle 2}, which will be used in the proof of \cref{lem:albanese lifting}.

\begin{defthm}\label{defthm:albanese}(\cite{Ser58, Ser88, DS08, Wit08})
    Let \(X\) be a smooth variety over a field \(K\). Then there exist a semi-abelian variety \(\Alb_X^0\), an \(\Alb_X^0\)-torsor \(\Alb_X^1\), and a map \(a:X\to \Alb_X^1\) that is universal among such data. In other words, if \(A\) is a semi-abelian variety, \(P\) is an \(A\)-torsor, and there is a map \(X\to P\), then there exists a unique map \(\Alb_X^1\to P\) making the following diagram commute:
    \[
\begin{tikzcd}[ampersand replacement=\&]
    X\arrow[r, "a"] \arrow[rd] \&  
    \Alb_X^1\arrow[d] \\
    \& P
\end{tikzcd}.
\]
\(\Alb_X^0\) and \(\Alb_X^1\) are respectively called \textbf{Albanese variety} and \textbf{Albanese torsor} of \(X\). The order of \( [\Alb_X^1]\in \HH^1(K, \Alb_X^0) \) is called the \textbf{period} of \(X\). If \(X\) is also proper, then \(\Alb_X^0\) is an abelian variety.
\end{defthm}

Note that Albanese varieties and Albanese torsors exist for more general classes of varieties. Here, for simplicity, we restrict our discussion to smooth varieties. We refer to \cite{Wit08, Moc12} for surveys on Albanese varieties. We now summarize the facts that will be useful for us.

\begin{thm}\label{thm:albanese}
    Let \(X\) be a smooth proper variety, and let \(U\subseteq X\) be an open subset.

    \begin{enumerate}
        \item (\cite[Corollary A.5]{Wit08}) The Albanese variety and Albanese torsor are stable under base change by a separable extension.
        
        \item (\cite[Theorem A.4]{Wit08}, \cite[Remark 2.1]{ACV26}) \(\Alb_X^0\) is the dual of \( \Pic_{X, \text{red}}^0 \). In particular, \(a:X\to \Alb_X^1\) induces pullback isomorphisms
        \[ a_1^\ast : \Pic^0_{\Alb_X^1}\xrightarrow{\sim} \Pic^0_{X, \text{red}} \text{ and } a_2^\ast : \Pic^0(\Alb_X^1)\xrightarrow{\sim} \Pic^0(X). \]
        
        \item (\cite[Theorem A.4]{Wit08}) Let \(\text{Div}^0_{X\setminus U}(X_{\widebar{K}})\) be the subgroup of \( \text{Div}_{X\setminus U}(X_{\widebar{K}}) \), i.e. divisors of \(X_{\widebar{K}}\) supported on \(X\setminus U\), consisting of algebraically trivial divisors, equipped with the natural \(G_K\)-action. Let \(T\) be a torus over \(K\) with character lattice \(\text{Div}^0_{X\setminus U}(X_{\widebar{K}})\). Then \(T\) is the kernel of \(\Alb_U^0\to\Alb_X^0\), and \(\Alb_U^1\to\Alb_X^1\) is a \(T\)-torsor over \(\Alb_X^1\).
        
        \item (\cite[Proposition 4.4]{MT03}) The map \(a:U\to \Alb_U^1\) induces a surjection
        \[ \pi_1(U)^{(\text{ab})} \twoheadrightarrow \pi_1(\Alb_U^1)^{(\text{ab})}, \]
        and it is an isomorphism if \(X=U\) and the N\'eron-Severi group of \(X\) is torsion-free. If \(n\) is invertible in \(K\) and the divisors supported on \(X_{\widebar{K}}\setminus U_{\widebar{K}}\) generate the N\'eron-Severi group of \(X_{\widebar{K}}\), then
        \[ \pi_1(U)^{(\Z/n)} \simeq \pi_1(\Alb_U^1)^{(\Z/n)}, \]
        where \(\pi_1^{(\Z/n)}\) denotes the largest quotient whose geometric part is an abelian \(n\)-torsion profinite group.
    \end{enumerate}
\end{thm}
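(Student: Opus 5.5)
The statement collects four known facts, so the plan is to reduce each item to its cited source, sketching the mechanism so it is clear which hypotheses matter later. For (1), I would use the construction of \(\Alb_X^0\) as the dual of \(\Pic^0_{X,\mathrm{red}}\) (for \(X\) proper) and of \(\Alb_X^1\) by Galois descent from \(\Alb_{X_{\widebar{K}}}\). Both constructions commute with separable base change: the Picard scheme and its reduced identity component are compatible with separable extensions, since reducedness is preserved under separable field extension. The universal property over \(L\) then follows from the universal property over \(\widebar{K}\) together with Galois descent of morphisms. This is exactly \cite[Corollary A.5]{Wit08}.

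For (2), I would follow \cite[Theorem A.4]{Wit08}. Over \(\widebar{K}\) with a chosen point, the classical theory identifies the Albanese variety with the dual of \(\Pic^0_{X,\mathrm{red}}\), and the Albanese map induces an isomorphism \(\Pic^0_{\Alb}\cong \Pic^0_{X,\mathrm{red}}\) by biduality. Translating by the torsor makes this isomorphism \(G_K\)-equivariant and independent of the base point, because translations act trivially on \(\Pic^0\). This descends to \(a_1^\ast\). For the statement on \(K\)-points \(a_2^\ast\) (line bundle classes defined over \(K\) lying in \(\Pic^0\)), I would compare the two Hochschild--Serre sequences \(0\to\Pic(-)\to\Pic_{(-)}(K)\to\Br(K)\) for \(X\) and \(\Alb_X^1\) via \(a^\ast\). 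The delicate point, which is the content of \cite[Remark 2.1]{ACV26}, is showing that the Brauer obstruction for a \(K\)-point of \(\Pic^0\) agrees on both sides. The plan is to use that \(X\to\Alb_X^1\) is compatible with the Poincaré-type universal families, so the obstruction classes coincide; this is the step I expect to be the main obstacle.

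For (3), I would use the standard description of the generalized Albanese of \(U\) via Serre's construction. Its character group of the toric part is the group of divisors supported on the boundary that are algebraically equivalent to zero. Equivalently, one can use the exact sequence \(0\to \widebar{K}[U]^\times/\widebar{K}^\times\to \mathrm{Div}_{X\setminus U}\to \Pic(X_{\widebar{K}})\to\Pic(U_{\widebar{K}})\to 0\) restricted to \(\Pic^0\), which dualizes to \(0\to T\to\Alb_U^0\to\Alb_X^0\to0\). The map of torsors \(\Alb_U^1\to\Alb_X^1\) is then equivariant for this surjection, hence a \(T\)-torsor. This is \cite[Theorem A.4]{Wit08}.

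For (4), I would follow \cite[Proposition 4.4]{MT03}. Geometrically, \(\pi_1(U_{\widebar{K}})^{\ab}\) is dual to \(\HH^1(U_{\widebar{K}},\mathbb{Q}/\Z)\), and \(\pi_1(\Alb_U)^{\ab}\) is the Tate module of \(\Alb_U^0\). Via Kummer theory, the map \(\HH^1(\Alb_U,\Z/n)\to\HH^1(U,\Z/n)\) is injective, with cokernel controlled by \(\NS[n]\)-type classes, namely the \(n\)-torsion in the part of \(\Pic(U_{\widebar{K}})\) not coming from \(\Pic^0\). Surjectivity on fundamental groups follows, and the extension over \(G_K\) is compatible since \(a\) is defined over \(K\). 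If \(X=U\) and \(\NS\) is torsion-free, the cokernel vanishes. If the boundary divisors generate \(\NS(X_{\widebar{K}})\), then \(\Pic(U_{\widebar{K}})\) is divisible modulo \(\Pic^0\), so the mod-\(n\) cokernel vanishes, which gives the isomorphism for \(\pi_1^{(\Z/n)}\).
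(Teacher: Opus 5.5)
For (1)--(3) your plan follows the paper's proof. The paper quotes \cite{Wit08} for (1) and (3). For (2) it reduces to \(X(K)\neq\emptyset\), identifies \(\Alb_X^1\) with the dual of \(\Pic^0_{X,\mathrm{red}}\) via a Poincaré bundle and biduality, and then obtains \(a_2^\ast\) by comparing the sequences \(0\to\Pic^0(Y)\to\Pic^0_Y(K)\to\Br(K)\).

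The step you single out as the main obstacle is actually immediate, and no Poincaré-family argument is needed. The Brauer obstruction is the differential \(d_2^{0,1}\) of the Leray spectral sequence with \(\mathbb{G}_m\)-coefficients, and this is functorial for the \(K\)-morphism \(a\). So the two sequences form a commutative ladder with the identity on \(\Br(K)\). Since \(\Pic^0_{X,\mathrm{red}}(K)=\Pic^0_X(K)\), the isomorphism \(a_1^\ast\) on \(K\)-points identifies the two kernels.

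For (4) you take a genuinely different route. The paper quotes \cite[Proposition 4.4]{MT03} over an algebraically closed field and transports it to the separable closure using \cite[Theorem B]{ACV26} and the invariance of \(\mathrm{NS}\). You instead compare the Kummer sequences \(0\to\mathcal{O}(Y)^\times/n\to\HH^1(Y,\mu_n)\to\Pic(Y)[n]\to 0\) for \(Y=U_{\overline{K}}\) and \(Y=\Alb_{U,\overline{K}}\) directly. This works, but the injectivity you assert rests on two comparisons you should make explicit:
\begin{enumerate}
\item The unit groups agree modulo constants. By Rosenlicht, a character of \(T\) extends to a unit on \(\Alb_U\) exactly when the corresponding divisor in \(\mathrm{Div}^0_{X\setminus U}\) is principal, so both sides are the principal divisors supported on \(X\setminus U\).
\item \(\Pic(\Alb_{U,\overline{K}})\) is \(\Pic(\Alb_{X,\overline{K}})\) modulo the image of the characters of \(T\). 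Its \(n\)-torsion is \(\bigl(\Pic^0(X_{\overline{K}})/\mathrm{im}\,\mathrm{Div}^0_{X\setminus U}\bigr)[n]\), which injects into \(\Pic(U_{\overline{K}})[n]\) with cokernel \(\bigl(\mathrm{NS}(X_{\overline{K}})/\langle X\setminus U\rangle\bigr)[n]\).
\end{enumerate}
Your route gives a self-contained argument that shows exactly where the two \(\mathrm{NS}\)-hypotheses enter, and it avoids the inseparable base-change input. The price is that it only sees \(n\) invertible in \(K\).

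In characteristic \(p\), your identification of \(\pi_1(\Alb_U)^{\ab}\) with a Tate module fails on the \(p\)-part, because tori have wild Artin--Schreier covers. So you obtain the first assertion of (4) only for the prime-to-\(p\) quotient. This is not a defect of your method; the restriction is forced. For \(U=\mathbb{P}^1\setminus\{0,1,\infty\}\) over \(\overline{\mathbb{F}}_p\), up to translation \(a(t)=(t,1-t)\in\mathbb{G}_m^2\). The connected abelian cover \(y^p-y=x_1+x_2\) of \(\mathbb{G}_m^2\) pulls back to the split cover \(y^p-y=1\), so surjectivity fails. The paper only applies (4) in characteristic \(0\), so this is harmless there, but you should state the restriction.
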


\begin{rmk}
    In \cref{thm:albanese} (2), we write \(a_1^\ast\) and \(a_2^\ast\) separately to distinguish the two pullbacks, since both are used in this section. In all other sections, however, \(a^\ast\) denotes \(a_2^\ast\).
\end{rmk}

We include the following verification only to clarify the distinction between the scheme-theoretic and group-theoretic pullbacks \(a_1^*\) and \(a_2^*\); readers willing to use \cref{thm:albanese} may skip directly to \cref{lem:albanese line bundle 1}.

(1) and (3) are stated in this form in the reference. We give some further details on (2) and (4). For (4), \cite[Proposition 4.4]{MT03} proves the statement under the assumption that \(K\) is algebraically closed. Moreover, by \cite[Theorem B]{ACV26}, the Albanese variety is invariant up to purely inseparable isogeny under inseparable base extension, so the same statement also holds over a separably closed field. Note that the N\'eron-Severi group over \(\widebar{K}\) is the same as the N\'eron-Severi group over an algebraic closure, since it is defined as the group of connected components of \(\Pic\), and algebraic extensions of separably closed fields do not change connected components. Hence the condition is also invariant, so (4) holds over a separably closed field as well. The general statement then follows, since the Galois part is unchanged and it remains only to compare the geometric part.

The first assertion of (2) is also stated in this form in the reference. We explain the two maps \(a_1^\ast, a_2^\ast\) in (2) in some detail, since they will be used in the proof of \cref{lem:albanese line bundle 1} and for the convenience of the reader. Note that in (2), \(a_1^\ast\) is a morphism of schemes, whereas \(a_2^\ast\) is a morphism of abelian groups.

To prove that \(a_1^\ast : \Pic^0_{\Alb_X^1}\to\Pic^0_{X, \text{red}}\) is an isomorphism, we may assume that \(X\) has a \(K\)-rational point \(x\), by Galois descent and \cref{thm:albanese} (1). Here, we directly prove that \(\Alb_X^1\) is the dual abelian variety of \(\Pic^0_{X, \text{red}}\). Note that \(\Pic^0_{X, \text{red}}\) is an abelian variety by \cite[Corollary 3.2]{Gro95}. By the universal property, there exists a Poincaré line bundle \(P\) on \(X\times \Pic^0_{X} \) such that \(P|_{x\times \Pic^0_{X}}\) is trivial and \(P|_{X\times y}\) is the line bundle corresponding to \(y\in \Pic^0_{X}\). Pulling back, this induces a line bundle \(P_{\text{red}}\) on \( X\times \Pic^0_{X, \text{red}} \). Since \(P_{\text{red}}|_{x\times \Pic^0_{X, \text{red}}}\) is trivial, this induces a morphism \( a:X\to \Pic^{0,t}_{X, \text{red}} \).

We prove that this satisfies the condition in \cref{defthm:albanese}, and hence that \(\Pic^{0,t}_{X, \text{red}}\) is the Albanese torsor and Albanese variety. Since \(X\) is a proper pointed variety, we may assume that the target is an abelian variety \(A\). Let \(f:X\to A\) be the given map. This induces a pullback map \(f^\ast:\Pic^0_A\to \Pic^0_X \), which factors through \( \Pic^0_A\to \Pic^0_{X, \text{red}} \). By taking the dual of this map, we obtain \(\Pic^{0,t}_{X, \text{red}}\to A\), and this satisfies the condition in \cref{defthm:albanese} by the identification with the double dual. More precisely, we have the following diagram

\[
\begin{tikzcd}[ampersand replacement=\&]
   P_X\arrow[d]\& (1, f^\ast)^\ast P_X=(f,1)^\ast P_A  \arrow[d] \arrow[r] \arrow[l]\& P_A \arrow[d]\\
    X\times \Pic^0_{X, \text{red}}\& X\times A^t \arrow[r, "f\times 1"]\arrow[l,"1\times f^\ast"'] \&  A\times A^t
\end{tikzcd}.
\]

This proves that when \(X(K)\ne \emptyset\), we have \(\Alb_X^1\simeq\Pic^{0,t}_{X, \text{red}}\). As mentioned above, this implies that \(a_1^\ast\) is an isomorphism. For a smooth proper variety \(Y\), the inclusion \( \Pic^0(Y)\subseteq \Pic^0_Y(K) \) is characterized as the kernel of the Brauer obstruction (cf. \cite{Sti10}). Since the Brauer obstruction is functorial, it follows that \(a_2^\ast\) is also an isomorphism.

Note that \cref{thm:albanese} (2) implies
\[ \Pic_{X, \text{red}}^0 \simeq  \Pic^0_{\Alb_X^1}\simeq \Pic^0_{\Alb_X^0} \]
since the last isomorphism holds for an arbitrary abelian variety and its torsor (see, for example, \cite[Remark 2.1]{ACV26}). By taking \(K\)-points, we obtain identifications
\begin{equation}\label{eqn:Pics}
    \Pic^0(X)=\Pic^0(\Alb_X^1)\subseteq \Pic^0_{\Alb_X^0}(K) =\Pic^0(\Alb_X^0)
\end{equation}
where the first equality is via \(a_2^\ast\), and the second equality holds since \(\Alb_X^0(K)\ne \emptyset\). 

With \cref{eqn:Pics} in mind, we prove two lemmas that will be used in the proof of \cref{lem:albanese lifting}. Let \(X\) be a smooth proper variety, let \(U\subseteq X\) be an open subset, and let \(T_U\) be the torus corresponding to the lattice \(\text{Div}^0_{X\setminus U}(X_{\widebar{K}})\), i.e. \(T_U:=\ker (\Alb_U^0\to \Alb_X^0)\).

\begin{lem}\label{lem:albanese line bundle 1}
    Choose a divisor \(E\) on \(X\) supported on \(X\setminus U\) whose class is contained in \(\Pic^0\), and let \(\chi:T_U\to \mathbb{G}_m \) be the corresponding character. Let \(\Alb_{U, \chi}^i\) be the pushforward of the \(T_U\)-torsor \(\Alb_U^i\to \Alb_X^i\) along \(\chi\) for \(i=0,1\), i.e.
        \[
        \begin{tikzcd}[ampersand replacement=\&]
            \Alb_U^i \arrow[r, "\chi_\ast"] \arrow[d] \& \Alb_{U, \chi}^i \arrow[ld, "\pi"]\\
            \Alb_X^i 
        \end{tikzcd}.
        \]
        Then, for \(i=0,1\), the \(\mathbb{G}_m\)-torsors  \(\Alb_{U, \chi}^i\to \Alb_X^i\) correspond, via the identification in \cref{eqn:Pics},  to the line bundle \(\mathcal{O}_X(E)\).
\end{lem}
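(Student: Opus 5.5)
The plan is to compare the two line bundles after pulling back along \(a:X\to\Alb_X^1\), and then to treat \(i=0\) separately using \cref{eqn:Pics}.

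\textbf{Reduction to \(i=1\).} For \(i=1\) the identification is \(a_2^\ast:\Pic^0(\Alb_X^1)\xrightarrow{\sim}\Pic^0(X)\). So it suffices to show that the line bundle \(\mathcal{L}_\chi\) associated with the \(\mathbb{G}_m\)-torsor \(\Alb_{U,\chi}^1\to\Alb_X^1\) satisfies \(a^\ast\mathcal{L}_\chi\simeq \mathcal{O}_X(E)\), possibly up to a sign fixed by conventions. We must also check that \(\mathcal{L}_\chi\) lies in \(\Pic^0(\Alb_X^1)\). This holds because extensions of an abelian-variety torsor by \(\mathbb{G}_m\) are classified by \(\Pic^0\), via the Weil–Barsotti description: the torsor \(\Alb^1_{U,\chi}\) carries a compatible action of the extension \(\Alb^0_{U,\chi}\), so its class is translation invariant.

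\textbf{Key step: \(a^\ast\mathcal{L}_\chi\simeq\mathcal{O}_X(E)\).} Restrict the Albanese map to \(a_U:U\to\Alb_U^1\), lying over \(a|_U\). Composing with \(\chi_\ast\) gives a map \(U\to\Alb^1_{U,\chi}\) over \(\Alb_X^1\). This is a nowhere-vanishing section of the \(\mathbb{G}_m\)-torsor \(a^\ast\Alb^1_{U,\chi}\) over \(U\), i.e.\ a trivialization of \(a^\ast\mathcal{L}_\chi|_U\). That trivialization extends to a rational section of \(a^\ast\mathcal{L}_\chi\) on \(X\), whose divisor is supported on \(X\setminus U\). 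We then identify this divisor with \(E\) (up to sign), and this identification is the main obstacle. Since divisors may be computed geometrically, we pass to \(\widebar K\), where we may assume \(X\) has a point and use \cref{thm:albanese} (3). There \(\Alb_U^0\) is the extension of \(\Alb_X^0\) by \(T_U\) whose character lattice is \(\text{Div}^0_{X\setminus U}\), following Serre's generalized Albanese construction. Concretely, for each \(D\in\text{Div}^0_{X\setminus U}\) the pushed-out extension is the \(\mathbb{G}_m\)-torsor of \(\mathcal{O}(D)\), transported to \(\Alb_X\), and the map \(U\to\Alb_U\) is built from the canonical rational sections \(1_D\). I would verify this by invoking the construction in \cite[Theorem A.4]{Wit08}, where \(\Alb_U\) is essentially defined as the dual \(1\)-motive of \([\text{Div}^0_{X\setminus U}\to\Pic^0_{X,\text{red}}]\). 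Under that definition, the character \(\chi\) corresponding to \(E\) yields the torsor of \(\mathcal{O}(E)\) together with its tautological section, and the section's divisor is exactly \(E\). Galois equivariance of the construction then gives the statement over \(K\).

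\textbf{Case \(i=0\).} Here we use \(\Pic^0(\Alb_X^1)=\Pic^0_{\Alb_X^0}(K)=\Pic^0(\Alb_X^0)\). The extension \(\Alb^0_{U,\chi}\) of \(\Alb_X^0\) by \(\mathbb{G}_m\) and the torsor \(\Alb^1_{U,\chi}\) are compatible: the latter is a torsor under the former. Hence their classes in \(\Pic^0_{\Alb_X^0}\cong\Pic^0_{\Alb_X^1}\), compared via translation after base change to a field where \(\Alb^1_X\) has a point, coincide. Since these classes are Galois invariant, they agree over \(K\), and the \(i=0\) case follows from the \(i=1\) case.
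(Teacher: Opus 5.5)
Your argument is correct, but it runs in the opposite direction from the paper's.

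\textbf{The paper's route.} The paper treats \(i=0\) as the primary case. By \cite[Theorem A.4]{Wit08}, \(\Alb_U^0\) is the extension of \(\Alb_X^0\) by \(T_U\) whose class is the natural map \(\text{Div}^0_{X\setminus U}(X_{\widebar{K}})\to\Pic^0_{X,\text{red}}\). Pushing out along \(\chi\) therefore gives the \(\mathbb{G}_m\)-extension classified by \(\mathcal{O}_X(E)\). The case \(i=1\) is then transported through \cref{eqn:Pics}. That transport uses the same translation-invariance of \(\Pic^0\) that you use in your final step, only run the other way.

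\textbf{Your route.} You prove \(i=1\) first. You pull back along \(a\) and note that \(U\to\Alb_U^1\xrightarrow{\chi_\ast}\Alb^1_{U,\chi}\) trivializes \(a^\ast\mathcal{L}_\chi\) over \(U\). This reduces the lemma to computing the divisor of that canonical rational section.

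What this buys:
\begin{itemize}
\item It makes explicit that the line bundle becomes trivial on \(U\), which is exactly what is used later in the proof of \cref{lem:albanese lifting} (2).
\item It pins down the divisor itself, not just its class.
\end{itemize}

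What it costs: the identification \(\operatorname{div}=E\) needs the explicit shape of the Albanese map of \(U\) in the Serre--Wittenberg construction, namely that each character sends it to the canonical section \(1_D\). That is finer information than the extension-class statement the paper extracts from \cite[Theorem A.4]{Wit08}. The class alone fixes that divisor only up to a principal divisor \(\operatorname{div}(f)\) with \(f\in\mathcal{O}(U)^\times\).

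So your route does not remove the dependence on that reference; it uses a slightly stronger form of it. The exception is if the correspondence between characters of \(T_U\) and \(\text{Div}^0_{X\setminus U}(X_{\widebar{K}})\) is normalized by this tautological-section divisor in the first place. In that case your \(i=1\) step is essentially immediate, and the real content sits in \cref{thm:albanese} (3).
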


\begin{proof}
    For \(i=0\), this follows directly from \cite[Theorem A.4]{Wit08}. The case of \(i=1\) is then a consequence of the identification \cref{eqn:Pics}.
\end{proof}

Next, we prove \cref{lem:albanese line bundle 2}, which plays a crucial role in the proof of \cref{lem:albanese lifting} (2). Consider open subsets \(U\subseteq V\subseteq X\), and assume that the divisors supported on \(X_{\widebar{K}}\setminus V_{\widebar{K}}\) generate the N\'eron-Severi group of \(X_{\widebar{K}}\). Under this assumption, we analyze the map \(\Alb_U^0\to \Alb_V^0\) (the first part of this is also done in \cite[Lemma 2.4]{Wit08}). Note that these are extensions of \(\Alb_X^0\) by tori corresponding to \(\text{Div}^0_{X\setminus U}(X_{\widebar{K}})\) and \(\text{Div}^0_{X\setminus V}(X_{\widebar{K}})\). Since
\[ \text{Div}^0_{X\setminus V}(X_{\widebar{K}})\hookrightarrow \text{Div}^0_{X\setminus U}(X_{\widebar{K}}) \]
and the cokernel is \(\text{Div}_{V\setminus U}(V_{\widebar{K}})\), which is a permutation \(G_K\)-module, we obtain that the kernel \(T\) of \(\Alb_U^0\to \Alb_V^0\) is a quasi-split torus of the form \( \prod_{j=1}^{m}\text{Res}_{F_j/K}\mathbb{G}_m \), where \(F_j\) is the maximal separable subextension of the field of constants of the irreducible component \(D^j\) of \(V\setminus U\).  Let \( \chi_j:T_{F_j}\to \mathbb{G}_m\) be the character of \(T_{F_j}\) corresponding to the factor \(\text{Res}_{F_j/K}\mathbb{G}_m\) and the natural multiplication map \( (F_j\otimes_K F_j)^\times \to F_j^\times\). The character \(\chi_j\) corresponds to \( D^j_{\widebar{K}} \in \text{Div}_{V\setminus U}(V_{\widebar{K}})\). Note that \(D^j_{\widebar{K}}\) is neither irreducible nor reduced in general.

\begin{lem}\label{lem:albanese line bundle 2}
    In the above setting, let \(U\subseteq V\subseteq X\), and assume that the divisors supported on \(X_{\widebar{K}}\setminus V_{\widebar{K}}\) generate the N\'eron-Severi group of \(X_{\widebar{K}}\). Let \(\chi_j\) be as above, and let \(\Alb_{U, F_j, \chi_j}^i\) be the pushforward of the \(T\)-torsor \(\Alb_{U, F_j}^i\to \Alb_{V, F_j}^i\) along \(\chi_j\) for \(i=0,1\), i.e.
        \[
        \begin{tikzcd}[ampersand replacement=\&]
            \Alb_{U, F_j}^i\arrow[r, "\chi_{j,\ast}"] \arrow[d] \& \Alb_{U, F_j, \chi_j}^i \arrow[ld, "\pi"]\\
            \Alb_{V, F_j}^i
        \end{tikzcd}.
        \]
    Then \(\Alb_{U, F_j, \chi_j}^i\) are \(\mathbb{G}_m\)-bundles over \(\Alb_{V, F_j}^i\) such that some positive tensor power is the pullback of an algebraically trivial line bundle on \( \Alb_{X,F_j}^i\).  
\end{lem}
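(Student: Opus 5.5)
Over \(F_j\), the idea is to exploit the hypothesis on \(X\setminus V\) to rewrite the divisor \(D^j\) as an algebraically trivial divisor supported on \(X\setminus U\), and then to reduce to \cref{lem:albanese line bundle 1}.

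Work over \(F_j\) and consider the divisor \(D^j_{\widebar K}\) that corresponds to \(\chi_j\). By hypothesis its class in \(\mathrm{NS}(X_{\widebar K})\) is a combination of components of \(X_{\widebar K}\setminus V_{\widebar K}\). Hence there is a divisor \(B\) supported on \(X\setminus V\) such that \(D^j-B\) is algebraically trivial. To make this work over \(F_j\) rather than just over \(\widebar K\), I would pass to a positive multiple \(N\). Then \(E:=N D^j - B\) can be taken Galois-invariant over \(F_j\), supported on \(X\setminus U\), and with class in \(\Pic^0\). Two facts should make this possible: the divisor group \(\mathrm{Div}_{X\setminus V}(X_{\widebar K})\) is a permutation module, and \(\mathrm{NS}\) is finitely generated with a finite Galois orbit. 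I expect this descent to be the main obstacle. The first thing I would try is averaging over \(\mathrm{Gal}(\widebar K/F_j)\) through a finite quotient, which gives \(N\) as the product of a group order and a torsion exponent. Since \(N\) is only required to be some positive multiple, the torsion part of \(\mathrm{NS}\) is harmless.

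Now let \(\psi\) be the character of \(T_{U,F_j}=\ker(\Alb^0_{U}\to\Alb^0_X)_{F_j}\) corresponding to \(E\). Restricted to \(T\subseteq T_U\), it equals \(N\chi_j\), because \(B\) lies in \(\mathrm{Div}^0_{X\setminus V}\), which is the character lattice of \(T_V=T_U/T\) and so is trivial on \(T\). Consider the \(\mathbb{G}_m\)-torsor \(\Alb^i_{U,F_j,\psi}\to\Alb^i_{X,F_j}\) obtained by pushing out along \(\psi\). By \cref{lem:albanese line bundle 1} it corresponds to \(\mathcal O_X(E)\), which is algebraically trivial under \cref{eqn:Pics}.

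It remains to compare \(\Alb^i_{U,F_j,\chi_j}^{\otimes N}\) with the pullback of this torsor to \(\Alb^i_{V,F_j}\). Pushing the \(T\)-torsor \(\Alb^i_U\to\Alb^i_V\) out along \(N\chi_j=\psi|_T\) gives the \(N\)-th tensor power. On the other hand, the \(T_U\)-torsor \(\Alb^i_U\to\Alb^i_X\) pushed out along \(\psi\) and pulled back to \(\Alb^i_V\) is a \(\mathbb G_m\)-torsor over \(\Alb^i_V\). The natural map \(\Alb^i_U\to \pi^*\Alb^i_{U,\psi}\) is \(T\)-equivariant through \(\psi|_T\), so it identifies this pullback with the pushout along \(\psi|_T\). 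A map of torsors equivariant along a character is exactly such an identification. Therefore \(\Alb^{i,\otimes N}_{U,F_j,\chi_j}\cong \pi^\ast\Alb^i_{U,F_j,\psi}\), which is the statement. The claim that these are \(\mathbb{G}_m\)-bundles is immediate, since they are pushouts of torsors along a character.
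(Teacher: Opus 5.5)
Your proposal is correct and is essentially the paper's proof. You lift a multiple of \(\chi_j\) to a \(G_{F_j}\)-invariant character \(\psi\) of \(T_{U,F_j}\) through the character-lattice sequence \(0\to\mathrm{Div}^0_{X\setminus V}(X_{\widebar K})\to\mathrm{Div}^0_{X\setminus U}(X_{\widebar K})\to\mathrm{Div}_{V\setminus U}(V_{\widebar K})\to 0\), and then apply \cref{lem:albanese line bundle 1}; your averaging over a finite quotient is the concrete form of the paper's ``taking Galois cohomology'' step, and your pushout comparison spells out what the paper leaves implicit.

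One justification needs fixing. \(B\) is in general not algebraically trivial, since it is algebraically equivalent to a positive multiple of \(D^j\), so it does not lie in \(\mathrm{Div}^0_{X\setminus V}(X_{\widebar K})\). The correct reason that \(\psi|_T=N\chi_j\) is that the map \(\mathrm{Div}^0_{X\setminus U}(X_{\widebar K})\to\mathrm{Div}_{V\setminus U}(V_{\widebar K})\) dual to \(T\hookrightarrow T_U\) is restriction of divisors to \(V\), which kills anything supported on \(X\setminus V\).
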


\begin{proof}
    Let \(T_U\) and \(T_V\) be the kernels of \(\Alb_U^0\to \Alb_X^0\) and \(\Alb_V^0\to \Alb_X^0\), respectively. Then we have an exact sequence
    \[ 1\to T\to T_U\to T_V\to 1 \]
    whose character lattices fit into an exact sequence
    \[ 0 \to \text{Div}^0_{X\setminus V}(X_{\widebar{K}})\hookrightarrow \text{Div}^0_{X\setminus U}(X_{\widebar{K}}) \twoheadrightarrow \text{Div}_{V\setminus U}(V_{\widebar{K}})\to 0.  \]
    The character \(\chi_j\) corresponds to \(D^j_{F_j}\), and is \(G_{F_j}\)-invariant. Therefore, by taking Galois cohomology with respect to \(G_{F_j}\), there exists a positive integer \(n\) such that \(nD^j_{F_j}\) lifts to a \(G_{F_j}\)-invariant element of \(\text{Div}^0_{X\setminus U}(X_{\widebar{K}})\), i.e. \(\chi_j^{n}\) lifts to a character \(\psi_j\) of \(T_{U, F_j}\). Then \(\Alb_{U, F_j, \chi_j}^{i, \otimes n}\) is the pushforward of the \(T_{U, F_j}\)-torsor \(\Alb_{U, F_j}^i\to \Alb_{X, F_j}^i\) along \(\psi_j\). Since \(\psi_j\in \text{Div}^0_{X\setminus U}(X_{\widebar{K}})^{G_{F_j}}=\text{Div}^0_{X\setminus U}(X_{F_j})\), this corresponds to an algebraically trivial line bundle on \(\Alb_{X,F_j}^i\) by \cref{lem:albanese line bundle 1}.  
\end{proof}

\section{Chern classes vs. the Weil Pairing}\label{sec:weil}

The goal of this section is to prove \cref{thm:Chern vs Weil}, which identifies the Chern class obstruction with the obstruction arising from the Weil pairing for general varieties. We begin with the case of abelian varieties, both for motivation and because it is used later in the paper.

Fix a natural number \(n\). Let \(K\) be a field of characteristic prime to \(n\), and let \(A\) be an abelian variety over \(K\). Let \(O\) be the identity of \(A\), let \(s_O:G_K\to \pi_1(A)\) be the corresponding section of the fundamental exact sequence
\[ 1\to \pi_1(A_{\widebar{K}})\to \pi_1(A)\to G_K\to 1, \]
and let \(s:G_K\to \pi_1(A)\) be another section. Passing to the maximal quotient whose geometric part is \(n\)-torsion, we obtain
\[ 1\to \pi_1(A_{\widebar{K}})\otimes \Z/n\simeq A[n]\to \pi_1(A)^{[n]}\to G_K\to 1, \]
hence the difference \(s-s_O\) modulo \(n\) defines a class
\[ [s-s_O]\in \HH^1(K, A[n]). \]
We define the following map:
\[ f_{s,n}:A^t(K)\xrightarrow{\delta} \HH^1(K, A^t[n])\xrightarrow{e(-\cup [s-s_O])}\HH^2(K, \mu_n), \]
where \(e\) is the Weil pairing, and \(\delta\) is the boundary map associated to the Kummer exact sequence for \(A^t\).

Note that \(A^t(K)\) is canonically identified with \(\Pic^0(A)\), the group of degree \(0\) line bundles on \(A\). Let
\[  A^t(K)\to \HH^1_{\acute et}(A, \mathbb{G}_m) \]
be the map sending a \(K\)-point of \(A^t\) to the corresponding line bundle on \(A\). By composing this with the boundary map associated to the Kummer exact sequence for \(\mathbb{G}_m\),
\[ c_1:\HH_{\acute et}^1(A, \mathbb{G}_m)\to \HH_{\acute et}^2(A, \mu_n)\simeq \HH^2(\pi_1(A), \mu_n), \]
and the pullback map
\[ s^\ast:\HH^2(\pi_1(A), \mu_n)\to \HH^2(K, \mu_n), \]
we obtain a morphism
\[ g_{s, n}: A^t(K)\to \HH^2(K, \mu_n). \]
Note that \(\HH_{\acute et}^2(A, \mu_n)\simeq \HH^2(\pi_1(A), \mu_n)\) follows from the general theory of \textit{algebraic \(K(\pi, 1)\)-spaces}. However, later we will define \(g_{s, n}\) directly, without appealing to this; see \cref{prop:F1 comparison} and \cref{defn:Chern class}.

The following is the most important case of the main theorem of this section, \cref{thm:Chern vs Weil}.

\begin{prop}\label{prop:coin}
    \( f_{s,n}=g_{s,n}. \)
\end{prop}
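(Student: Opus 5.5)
The plan is to compute both sides through the Hochschild--Serre filtration on \(\HH^2_{\acute et}(A,\mu_n)\), using the explicit semidirect product structure of \(\pi_1(A)^{[n]}\) given by the splitting \(s_O\). Let \(y\in A^t(K)\) and let \(L\) be the corresponding line bundle, rigidified along \(O\).

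\textbf{Step 1: reduce to geometrically trivial classes.} Over \(\widebar K\), the group \(\Pic^0\) is divisible, so \(L_{\widebar K}\simeq M^{\otimes n}\) for some \(M\in\Pic^0(A_{\widebar K})\). Hence \(c_1(L_{\widebar K})=0\) in \(\HH^2(A_{\widebar K},\mu_n)\), and \(c_1(L)\) lies in the first filtration step \(F^1\HH^2_{\acute et}(A,\mu_n)\).

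\textbf{Step 2: identify the graded piece with the Kummer class.} The graded piece \(F^1/F^2\) is a subquotient of
\[
\HH^1(K,\HH^1(A_{\widebar K},\mu_n))=\HH^1(K,\Hom(A[n],\mu_n))=\HH^1(K,A^t[n]),
\]
where the last identification is the Weil pairing. I will show that the image of \(c_1(L)\) there is \(\delta(y)\), up to a universal sign fixed by conventions. The choices of \(n\)-th root \(M\) form an \(A^t[n]\)-torsor. The \(1\)-cocycle \(\sigma\mapsto {}^\sigma M\otimes M^{-1}\in A^t[n]\) is by definition \(\delta(y)\). On the other hand, unwinding the Kummer boundary on \(A\) restricted to the Hochschild--Serre \(E_2^{1,1}\)-term produces this same cocycle, viewed as a \(\mu_n\)-torsor class via \(\HH^1(A_{\widebar K},\mu_n)=A^t[n]\).

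\textbf{Step 3: the \(F^2\)-component vanishes.} Since \(L|_O\) is trivial, \(s_O^\ast c_1(L)=0\). Because \(s_O\) splits the filtration, \(c_1(L)\) has no \(F^2=\HH^2(K,\mu_n)\)-component relative to \(s_O\).

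\textbf{Step 4: compute \(s^\ast\) at the level of group cocycles.} Write \(E=\pi_1(A)^{[n]}=A[n]\rtimes G_K\) using \(s_O\). A class in \(\HH^2(E,\mu_n)\) that restricts to zero on \(A[n]\) and on \(s_O(G_K)\) is determined by a cocycle \(\varphi\in Z^1(G_K,\Hom(A[n],\mu_n))\). It can be represented by the explicit \(2\)-cocycle
\[
((m,g),(m',g'))\mapsto \varphi_g(g m').
\]
Pulling this back along \(s(g)=(a_g,g)\), where \(a\) represents \([s-s_O]\), gives \((g,g')\mapsto\varphi_g(g a_{g'})\). This is exactly the cup product \(e(\varphi\cup a)\). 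With \(\varphi=\delta(y)\) from Step 2, this yields \(g_{s,n}(y)=e(\delta(y)\cup[s-s_O])=f_{s,n}(y)\).

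\textbf{Main obstacle.} I expect the hardest part to be the compatibility in Step 2. It requires the Hochschild--Serre term of the étale Chern class to match the group-cohomological class on \(\pi_1(A)^{[n]}\), including the comparison \(\HH^2_{\acute et}(A,\mu_n)\simeq\HH^2(\pi_1(A),\mu_n)\), and it requires pinning down the sign conventions for \(\delta\) and \(e\). To handle this concretely, I would use the \(A[n]\)-torsor \([n]:A\to A\). The pullback \([n]^\ast L\simeq L^{\otimes n^2}\) is canonically an \(n\)-th power, namely of \(L^{\otimes n}\) up to \(A^t[n]\)-twist. This lets \(c_1(L)\) be realized as an extension class of the finite group \(\pi_1(A)^{[n]}\) by \(\mu_n\), and the cocycle in Step 4 can then be read off directly from how Galois acts on the chosen \(n\)-th root.
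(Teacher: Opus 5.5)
Your outline matches the paper's, and Steps 1, 3 and 4 are sound. Your \(2\)-cocycle \(((m,g),(m',g'))\mapsto\varphi_g(gm')\) is exactly \(e(\operatorname{inf}\varphi\cup\xi)\), with \(\xi(m,g)=[m]\) as in the proof of \cref{prop:group coh calc}, and pulling it back along \(s\) does give \(e(\varphi\cup[s-s_O])\). Two small additions are needed there:
\begin{enumerate}
\item You must check that this cocycle has \(\eta\)-image \([\varphi]\), not only that it dies on \(A[n]\) and along \(s_O\). The paper gets this from multiplicativity of the Hochschild--Serre spectral sequence: the \(E^{1,0}\)-part \(\varphi\) of \(\operatorname{inf}\varphi\) times the \(E^{0,1}\)-part \(\pi_n\) of \(\xi\).
\item \(c_1(L)\) comes from the finite quotient \(\pi_1(A)^{[n]}\) because inflation is an isomorphism on \(F^1\HH^2\): the \(E_2^{p,q}\) agree for \(q\le 1\) and both extensions split, as in \cref{prop:F1 comparison}. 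The full \(K(\pi,1)\) comparison is not needed.
\end{enumerate}

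The genuine gap is Step 2. It is the real content of the proposition (the paper's \cref{prop:chern class}), and you assert it rather than prove it.

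Why ``unwinding the Kummer boundary'' does not suffice:
\begin{enumerate}
\item \(c_1\) is the boundary for \(\mathbb{G}_m\), which is not locally constant, so there is no cocycle description to unwind.
\item For \(y\neq 0\), \(L\) has nonzero image \(L_{\overline K}\) in \(E_2^{0,1}=\Pic(A_{\overline K})^{G_K}\) of the \(\mathbb{G}_m\)-Leray spectral sequence.
\item Hence the map of spectral sequences induced by the Kummer boundary only shows that the \(E^{0,2}\)-component of \(c_1(L)\) vanishes. It says nothing about the \(E^{1,1}\)-component, which is a secondary invariant.
\end{enumerate}
The paper computes that component by realizing \(c_1(L)\) as the root gerbe \(\sqrt[n]{L}\). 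It then uses Giraud's description of \(\eta\) on \(F^1\) as the class of the \(R^1f_\ast\mu_n\)-torsor of maximal subgerbes of \(f_\ast\sqrt[n]{L}\). That torsor is the torsor of \(n\)-th roots of \(L_{\overline K}\), whence \(\sigma\mapsto{}^\sigma M\otimes M^{-1}\).

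Your fallback via \([n]\colon A\to A\) cannot replace this.
\begin{enumerate}
\item There is a slip: for \(L\in\Pic^0(A)\) one has \([n]^\ast L\simeq L^{\otimes n}\); the exponent \(n^2\) is for symmetric bundles.
\item More importantly, the source of \([n]\) corresponds to \(H=n\,\pi_1(A_{\overline K})\rtimes s_O(G_K)\), on which \(\xi\) restricts to \(0\).
\item By the decomposition \(c=\operatorname{inf}(s_O^\ast c)+e(\operatorname{inf}\eta(c)\cup\xi)\), restriction to \(H\) only remembers \(s_O^\ast c\).
\item So \([n]^\ast c_1(L)=0\) is equivalent to the already known \(s_O^\ast c_1(L)=0\) and carries no information about \(\eta(c_1(L))\).
\end{enumerate}

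A route that does work for abelian varieties, specific to them and different from the paper's gerbe argument, is the following.
\begin{enumerate}
\item By the torsor version of the Chern class (as in \cite{BES15}), \(c_{1,n}(L)\) is, up to sign, the class of the extension of \(\pi_1(A)\) by \(\mu_n\) induced by the \(\mathbb{G}_m\)-torsor \(L^\times\).
\item For rigidified \(L\in\Pic^0(A)\), \(L^\times\) is a commutative extension of \(A\) by \(\mathbb{G}_m\).
\item Then \(s^\ast c_{1,n}(L)\) is the connecting image of \([s-s_O]\) for \(0\to\mu_n\to L^\times[n]\to A[n]\to 0\).
\item It remains to identify this Galois-module extension with \(\delta(y)\).
\end{enumerate}
Whichever route you choose, such an argument must be supplied.
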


The difficulty of the proof arises from the following discrepancy: in the definition of \(g_{s,n}\), the map \(c_1\) involves the étale cohomology of \(\mathbb{G}_m\), which is a purely geometric, étale-cohomological object and cannot be reduced to group cohomology, since \(\mathbb{G}_m\) is not locally constant. However, the morphism \(s^\ast\) uses \(s\), which is merely a group-theoretic section and may not come from geometry. Hence, we need to reconcile these two aspects in order to compute \(g_{s,n}\).

What we will do is the following. Let \(L\) be an algebraically trivial line bundle on \(A\), corresponding to a point of \(A^t(K)\). Since \(L\) is algebraically trivial, its Chern class \(c_1(L)\in \HH^2_{\acute et}(A, \mu_n)\) is geometrically trivial, i.e. its image under
\[ \HH^2_{\acute et}(A, \mu_n)\to \HH^2_{\acute et}(A_{\widebar{K}}, \mu_n) \]
is trivial. The Leray spectral sequence for \(A\to \text{Spec }K\) gives
\[  E_2^{p,q}=\HH_{\acute et}^p(K, \HH_{\acute et}^q(A_{\widebar{K}}, \mu_n))\Rightarrow \HH_{\acute et}^p(A, \mu_n). \]
Since \(A\to \text{Spec }K\) admits a section, we have \(E_2^{p,0}=\HH_{\acute et}^p(K, \mu_n)\hookrightarrow \HH_{\acute et}^p(A,\mu_n)\). Therefore, \(E_2^{p,0}=E_\infty^{p,0}\) and \(d_r^{p,1}=0\) for \(r\ge 2\). In particular, \(E_2^{2,0}=E_\infty^{2,0}\) and \(E_2^{1,1}=E_\infty^{1,1}\). We record this as a simple lemma, since it will be used several times.
\begin{lem}\label{lem:SS}
    Let \(E_2^{p,q} \Rightarrow H^{p+q}\) be a spectral sequence such that \(E_2^{p, 0}\hookrightarrow H^p\). Then \(E_2^{p,0}=E_\infty^{p,0}\), and \(d_r^{p,1}=0\) for \(r\ge 2\). In particular, \(E_2^{0,1}=E_\infty^{0,1}\), \(E_2^{2,0}=E_\infty^{2,0}\) and \(E_2^{1,1}=E_\infty^{1,1}\).
\end{lem}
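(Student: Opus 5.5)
The statement to prove is \cref{lem:SS}: a purely formal fact about a first-quadrant cohomological spectral sequence \(E_2^{p,q}\Rightarrow H^{p+q}\). The plan is to use the standard description of the edge map. Since \(E_r^{p,0}\) lies on the bottom row, no nonzero differential can land in it from below. Hence each \(E_{r+1}^{p,0}\) is the quotient of \(E_r^{p,0}\) by the image of
\[ d_r^{p-r,r-1}:E_r^{p-r,r-1}\to E_r^{p,0}, \]
and \(E_\infty^{p,0}\) is a quotient of \(E_2^{p,0}\). The edge map \(E_2^{p,0}\to H^p\) factors as
\[ E_2^{p,0}\twoheadrightarrow E_\infty^{p,0}=F^pH^p\hookrightarrow H^p. \]
If the composite is injective, the surjection \(E_2^{p,0}\twoheadrightarrow E_\infty^{p,0}\) is injective as well, so it is an isomorphism and \(E_2^{p,0}=E_\infty^{p,0}\). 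This is the first claim.

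The second claim is about \(d_r^{p,1}:E_r^{p,1}\to E_r^{p+r,2-r}\). The target is zero unless \(r=2\), because for \(r\ge3\) we have \(2-r<0\) and the sequence is first-quadrant. For \(r=2\) the map is \(d_2^{p,1}:E_2^{p,1}\to E_2^{p+2,0}\). Its image is killed in passing to \(E_3^{p+2,0}\), and hence in passing to \(E_\infty^{p+2,0}\). Since the first claim holds for every \(p\) (the hypothesis is assumed for all \(p\)), we have \(E_2^{p+2,0}=E_\infty^{p+2,0}\). So no nontrivial quotient occurs, the image of \(d_2^{p,1}\) is zero, and \(d_r^{p,1}=0\) for all \(r\ge2\).

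For the "in particular" statements, \(E_2^{2,0}=E_\infty^{2,0}\) is the first claim with \(p=2\). For \(E^{0,1}\) and \(E^{1,1}\), consider the incoming differentials \(d_r^{p-r,r}\) into \(E_r^{p,1}\). Their sources have first index \(p-r<0\) once \(r\ge2\) and \(p\le1\), so they vanish. The outgoing differentials vanish by the second claim. Hence \(E_2^{p,1}=E_\infty^{p,1}\) for \(p=0,1\).

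There is no real obstacle. The only point needing care is to check that the hypothesis gives injectivity of the edge map itself, namely the map induced by the actual spectral sequence. In the application this holds because a section of \(A\to\operatorname{Spec}K\) splits the pullback \(\HH^p(K,\mu_n)\to\HH^p_{\acute et}(A,\mu_n)\), and that pullback is the edge map of the Leray spectral sequence.
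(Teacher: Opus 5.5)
Your proof is correct and follows the same edge-map reasoning the paper uses: the paper gives no separate proof, only the remark before the lemma that \(E_2^{p,0}\hookrightarrow H^p\) forces \(E_2^{p,0}=E_\infty^{p,0}\) and hence \(d_r^{p,1}=0\). One small wording fix: the bottom row only ever passes to quotients because the outgoing differentials \(d_r^{p,0}\) land in \(E_r^{p+r,1-r}=0\), not because nothing lands in it ``from below''.
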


By definition,
\[ F^1\HH^2_{\acute et}(A, \mu_n):=\ker \left(\HH^2_{\acute et}(A, \mu_n)\to \HH^2_{\acute et}(A_{\widebar{K}}, \mu_n)\right),  \]
and by \cref{lem:SS}, we have an exact sequence
\begin{equation}\label{eqn:etexact}
    0\to \HH_{\acute et}^2(K, \mu_n)\to  F^1\HH^2_{\acute et}(A, \mu_n)\xrightarrow{\eta} \HH_{\acute et}^1(K, \HH_{\acute et}^1(A_{\widebar{K}}, \mu_n))\to 0.
\end{equation}

The proof of \cref{prop:coin} proceeds as follows. As a first step, we compute \(\eta(c_1(L))\). For this, we use the description of classes in \(\HH^2_{\acute et}(A, \mu_n)\) in terms of \(\mu_n\)\textit{-gerbes} over \(A\). In particular, \(c_1(L)\) is described by the \(n\)th \textit{root gerbe} (or \textit{root stack}) of \(L\). Since \(c_1(L)\) is geometrically trivial, we can describe \(\eta(c_1(L))\) as the \textit{Galois descent obstruction} of this root gerbe, and compute it in terms of an explicit cocycle in group cohomology. Consequently, \(\eta(c_1(L))\) is precisely the Galois descent obstruction to an \(n\)th root of \(L\).

This reduces everything to group cohomology. The second step is to compute \(g_{s,n}\) using explicit cocycles in group cohomology. We have an analogue of \cref{eqn:etexact}, namely
\begin{equation}\label{eqn:grpexact}
    0\to \HH^2(K, \mu_n)\to  F^1\HH^2(\pi_1(A), \mu_n)\xrightarrow{\eta} \HH^1(K, \HH^1(\pi_1(A_{\widebar{K}}), \mu_n))\to 0,
\end{equation}
coming from the Hochschild-Serre spectral sequence, by \cref{lem:SS}. The class \(c_1(L)\) lies in \(F^1\HH^2(\pi_1(A), \mu_n)\), and we know \(\eta(c_1(L))\). This is enough to determine \(c_1(L)\). Indeed, \cref{eqn:grpexact} admits a splitting \(F^1\HH^2(\pi_1(A), \mu_n)\to \HH^2(K, \mu_n)\) given by 
\[ F^1\HH^2(\pi_1(A), \mu_n)\hookrightarrow \HH^2(\pi_1(A), \mu_n)\xrightarrow{s_O^\ast}\HH^2(K, \mu_n). \]
Moreover, since \(s_O\) is geometric,
\[ s_O^\ast c_1(L)=c_1(s_O^\ast(L))=0 \]
because every line bundle on \(\text{Spec }K\) is trivial. From \(\eta(c_1(L))\) and \(s_O^\ast c_1(L)\) we can determine \(c_1(L)\) in terms of group cohomology, which allows us to compute \(g_{s,n}=s^\ast c_1(L)\).\

Let us consider the first step. We prove the following result.

\begin{prop}\label{prop:chern class}
    Let \(X\) be a proper variety over a field \(K\) such that \((n, \text{char }K)=1 \), and let \(L\) be a line bundle such that \(M^{\otimes n}\simeq L|_{X_{\widebar{K}}} \) for some \(M\in \Pic (X_{\widebar{K}})\). Then \(c_1(L)\in F^1\HH^2(X, \mu_n)\), and
    \[ \eta(c_1(L))\in \HH^1_{\acute et}(K, \HH^1_{\acute et}(X_{\widebar{K}}, \mu_n))=\HH^1_{\acute et}(K, \Pic(X_{\widebar{K}})[n] ) \]
    is represented by the cocycle
    \[ \sigma\in G_K\mapsto M^\sigma\otimes M^{-1}\in  \HH^1(X_{\widebar{K}}, \mu_n)=\Pic(X_{\widebar{K}})[n]. \]
\end{prop}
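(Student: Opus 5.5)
The plan is to use the description of \(\HH^2_{\acute et}(X,\mu_n)\) by \(\mu_n\)-gerbes, taking the class \(c_1(L)\) to be represented by the root gerbe \(\mathcal{G}_L:=\sqrt[n]{L/X}\). Here \(\mathcal{G}_L\) is the stack whose sections over \(T\to X\) are pairs \((N,\phi)\) with \(N\) a line bundle on \(T\) and \(\phi:N^{\otimes n}\xrightarrow{\sim}L|_T\). This is the standard fact that the Kummer boundary \(\HH^1(X,\mathbb{G}_m)\to\HH^2(X,\mu_n)\) sends \([L]\) to the class of its root gerbe. Over \(X_{\widebar K}\), the choice of \(M\) together with an isomorphism \(\phi:M^{\otimes n}\simeq L_{\widebar K}\) is a global object of \(\mathcal{G}_L\times_X X_{\widebar K}\). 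Hence the gerbe is neutral there, the image of \(c_1(L)\) in \(\HH^2(X_{\widebar K},\mu_n)\) vanishes, and \(c_1(L)\in F^1\HH^2(X,\mu_n)\). This settles the first assertion.

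For the second assertion I would describe \(\eta\) concretely as a Galois descent obstruction. Let \(\alpha\in F^1\HH^2(X,\mu_n)\) be represented by a gerbe \(\mathcal{G}\) that is neutral over \(X_{\widebar K}\), and fix an object \(x\in\mathcal{G}(X_{\widebar K})\). The objects of the neutral gerbe \(\mathcal{G}_{X_{\widebar K}}\) form a torsor under \(\HH^1(X_{\widebar K},\mu_n)\): two objects differ by the \(\mu_n\)-torsor \(\underline{\mathrm{Isom}}(x,y)\). Then \(\sigma\mapsto[\underline{\mathrm{Isom}}(x,x^\sigma)]\) is a \(1\)-cocycle with values in \(\HH^1(X_{\widebar K},\mu_n)\). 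Changing \(x\) alters it by a coboundary, so its class is independent of the choice of \(x\).

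The key step, and the main obstacle, is to check that this class equals \(\eta(\alpha)\), where \(\eta\) comes from the edge map \(F^1\HH^2\to E_\infty^{1,1}=E_2^{1,1}\) of \cref{lem:SS}. Matching a concrete cocycle with an abstract spectral-sequence edge map is notoriously sign- and convention-sensitive. I would first try to reduce to a Čech/hypercover computation. Pass to a finite Galois extension \(F/K\) over which \(M\), \(\phi\) and the relevant isomorphisms are defined, and compute with the Čech covering \(X_F\to X\). A Čech \(2\)-cocycle of \(\mu_n\) for the covering \(X_F\to X\) that becomes trivial on \(X_F\) has a "\((1,1)\)-component" for the double complex \(C^p(\mathrm{Gal}(F/K),C^q(X_F,\mu_n))\). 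This component is exactly the class of \(\underline{\mathrm{Isom}}(x,x^\sigma)\), and the Hochschild--Serre/Leray filtration is computed by this double complex, so \(\eta\) reads off that component. If this becomes messy, a fallback is to characterize \(\eta\) by functoriality: \(\eta\) is natural in \(X\) and compatible with the splitting for pointed \(X\), and the descent-obstruction map shares these properties. Both maps vanish on \(\HH^2(K,\mu_n)\) and factor through \(E_2^{1,1}\), so it would suffice to compare them after passing to the limit over \(F\).

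Finally, I would apply this to \(\mathcal{G}_L\) with \(x=(M,\phi)\). Since \(L\) is defined over \(K\), \(x^\sigma=(M^\sigma,\phi^\sigma)\) with \((M^\sigma)^{\otimes n}\simeq L_{\widebar K}\). The sheaf \(\underline{\mathrm{Isom}}((M,\phi),(M^\sigma,\phi^\sigma))\) is the \(\mu_n\)-torsor of isomorphisms \(M\to M^\sigma\) whose \(n\)th power is compatible with \(\phi,\phi^\sigma\). Under the Kummer identification \(\HH^1(X_{\widebar K},\mu_n)=\Pic(X_{\widebar K})[n]\), which uses \(\HH^0(X_{\widebar K},\mathbb{G}_m)=\widebar K^\times\) being \(n\)-divisible because \(X\) is proper and geometrically integral, this torsor corresponds to the \(n\)-torsion line bundle \(M^\sigma\otimes M^{-1}\), trivialized in its \(n\)th power by \(\phi^\sigma\circ\phi^{-1}\). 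This gives the stated cocycle, up to the sign convention fixed for the Kummer boundary.
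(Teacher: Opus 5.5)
This is essentially the paper's argument. You represent $c_1(L)$ by the root gerbe $\sqrt[n]{L}$, read off $\eta(c_1(L))$ as the Galois torsor of $n$th roots of $L_{\widebar{K}}$ (your isomorphism classes of objects over $X_{\widebar{K}}$ are the paper's maximal subgerbes of the direct image of $\sqrt[n]{L}$ to $\text{Spec }K$), and obtain the cocycle $\sigma\mapsto M^\sigma\otimes M^{-1}$.

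The identification you flag as the main obstacle is not proved by hand in the paper; it is quoted from Giraud as \cref{lem:giraud}, and you should cite it the same way. Your functoriality fallback cannot distinguish $\eta$ from $-\eta$. A hands-on \v{C}ech check would need a Galois-stable cover of $X_F$ trivializing $M$, because the \v{C}ech cochains of the covering $X_F\to X$ alone only see the $(p,0)$ row.
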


We compute \(\eta\) using root gerbes. Let \(X\) be a connected scheme and \(\mathcal{G}\) an étale sheaf of abelian groups on \(X\). There is a canonical bijection between isomorphism classes of étale \(\mathcal{G}\)-gerbes on \(X\) and cohomology classes in \(\HH_{\acute et}^2(X,\mathcal{G})\); see \cite[Définition IV.3.1.1, Théorème IV.3.4.2]{Gir71}. In our case, we have a line bundle \(L\) on \(X\) and its Chern class \(c_1(L)\in \HH_{\acute et}^2(X, \mu_n)\). The corresponding \(\mu_n\)-gerbe, namely the \(n\)th \textbf{root gerbe} \(\sqrt[n]{L}\), is given by the functor \(\sqrt[n]{L}:\textbf{Sch}_X^{\acute et}\to \textbf{Groupoid}\) defined by
\[ \sqrt[n]{L}(f:T\to X)= \{(M, \phi) \ |\   M\in \Pic(T), \phi:M^{\otimes n}\xrightarrow{\sim}f^\ast L  \} \]
where a morphism between \( (M_i, \phi_i) \), \(i=1,2\), is given by an isomorphism \(\psi:M_1\to M_2\) such that \( \phi_1=\phi_2\circ \psi^{\otimes n} \). See, e.g. \cite[proof of Proposition 2.3.4.4]{Lie07}. This description suggests that \(\eta(c_1(L))\) should be the descent obstruction to an \(n\)th root of \(L\). This interpretation is natural and may be familiar to experts, but we include the argument for completeness.

\begin{lem}\label{lem:giraud} (\cite[Exercise V.3.1.9.2]{Gir71} for the etale site of schemes)
    Let \(f:X\to Y\) be a morphism of schemes, and let \(\mathcal{G}\) be an etale sheaf of abelian groups on \(X\). Let \(\mathscr{G}\) be a \(\mathcal{G}\)-gerbe whose class lies in \(F^1\HH^2_{\acute et}(X, \mathcal{G})\). Let \(\mathcal{F}\) be the sheaf of maximal sub-gerbes of the direct image \(f_\ast \mathscr{G}\). Then \(\mathcal{F}\) is an \(R^1f_\ast \mathcal{G}\)-torsor on \(Y\), and its class \([\mathcal{F}]\in \HH^1_{\acute et}(Y, R^1f_\ast \mathcal{G})\) is the image of \([\mathscr{G}]\) under the map 
    \[ \eta:F^1\HH^2_{\acute et}(X, \mathcal{G})\to \HH^1_{\acute et}(Y, R^1f_\ast \mathcal{G}) \]
    arising from the Leray spectral sequence for \(f\).
\end{lem}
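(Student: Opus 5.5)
The plan is to compute both sides with a single étale cover of \(Y\) that neutralizes \(\mathscr{G}\) after pullback to \(X\). Both the torsor \(\mathcal{F}\) and the Leray class \(\eta([\mathscr{G}])\) will then be given by the same Čech cocycle, namely the classes of the \(\mathcal{G}\)-torsors of isomorphisms between local neutralizing objects.

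\emph{Step 1: local neutrality.} By definition, \(F^1\HH^2_{\acute et}(X,\mathcal{G})\) is the kernel of \(\HH^2_{\acute et}(X,\mathcal{G})\to \HH^0(Y,R^2f_\ast\mathcal{G})\). Hence there is an étale covering \(\{V_i\to Y\}\) such that \([\mathscr{G}]\) restricts to zero on each \(X_{V_i}:=X\times_Y V_i\). So \(\mathscr{G}|_{X_{V_i}}\) is neutral, and we choose objects \(x_i\in\mathscr{G}(X_{V_i})\).

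\emph{Step 2: \(\mathcal{F}\) is an \(R^1f_\ast\mathcal{G}\)-torsor.} Over any \(V\to Y\) with \(\mathscr{G}(X_V)\neq\emptyset\), the maximal sub-gerbes of \(f_\ast\mathscr{G}|_V\) correspond to objects of \(\mathscr{G}(X_V)\) modulo the relation \enquote{isomorphic étale locally on \(V\)}. Any two objects \(x,x'\) differ by the \(\mathcal{G}\)-torsor \(\underline{\mathrm{Isom}}(x,x')\) on \(X_V\), and conversely any torsor can be used to twist \(x\). After sheafifying on \(Y\), this gives a free and transitive action of \(R^1f_\ast\mathcal{G}\), the sheafification of \(V\mapsto \HH^1(X_V,\mathcal{G})\), on \(\mathcal{F}\). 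By Step 1, \(\mathcal{F}\) has local sections, so it is a torsor. Moreover, with respect to the cover \(\{V_i\}\), its class is represented by the Čech \(1\)-cocycle \(P_{ij}:=[\underline{\mathrm{Isom}}(x_i|_{V_{ij}},x_j|_{V_{ij}})]\in R^1f_\ast\mathcal{G}(V_{ij})\).

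\emph{Step 3: identifying this with \(\eta\).} I would compute \(\HH^2(X,\mathcal{G})=\HH^2(Y,Rf_\ast\mathcal{G})\) by the Čech-to-derived spectral sequence for the cover \(\{V_i\}\), applied to the complex \(Rf_\ast\mathcal{G}\). Classically, the Čech description of the gerbe class has three pieces of data:
\begin{itemize}
\item the objects \(x_i\), which kill the \((0,2)\) component;
\item the torsors \(P_{ij}\), which form the \((1,1)\) component;
\item the trivializations \(P_{ij}\otimes P_{jk}\simeq P_{ik}\) over \(V_{ijk}\), together with their failure of coherence over \(V_{ijkl}\), which give the \((2,0)\) component with values in \(f_\ast\mathcal{G}\).
\end{itemize}
Next I would use the truncation triangle \(f_\ast\mathcal{G}\to\tau_{\le1}Rf_\ast\mathcal{G}\to R^1f_\ast\mathcal{G}[-1]\). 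The class lies in \(F^1\), so it lifts to \(\HH^2(Y,\tau_{\le1}Rf_\ast\mathcal{G})\), and \(\eta\) is the induced map to \(\HH^1(Y,R^1f_\ast\mathcal{G})\). On Čech cochains, this map simply forgets the \((2,0)\) data and keeps the \((1,1)\) data \((P_{ij})\). This matches Step 2.

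The main obstacle is Step 3. One must check that the edge map \(\eta\) of the Leray spectral sequence really agrees with the \enquote{project to the \((1,1)\) component} map in the Čech double complex. This requires compatibility of the Leray filtration with the Čech filtration. It also requires using Giraud's identification of gerbes with \(\HH^2\) with a fixed sign convention, so that the equality holds on the nose and not only up to sign. To handle this, I would first reduce to a hypercover so that the Čech computation computes derived-functor cohomology. I would then verify that the comparison maps are compatible with the filtration, using the naturality of the truncation triangle.
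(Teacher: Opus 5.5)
\paragraph{Where your route differs, and what already works.} The paper gives no proof of this lemma. It quotes it from Giraud's Exercise V.3.1.9.2, where the low-degree Leray maps are built directly from direct images of gerbes; the exercise is precisely to match them with the spectral sequence. Your \v{C}ech computation is a sensible way to do that exercise, and parts of it are fine.

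Steps 1 and 2 are correct up to one imprecision. Even when \(\mathscr{G}(X_V)\neq\emptyset\), a maximal sub-gerbe of \(f_\ast\mathscr{G}|_V\) need not contain an object over \(X_V\); in the paper's application, take the sub-gerbe attached to a \(G_K\)-invariant \(n\)th root of \(L_{\overline K}\) that does not come from \(\Pic(X)\). So \(\mathcal{F}\) is the sheafification of \(V\mapsto \mathrm{Ob}\,\mathscr{G}(X_V)/{\cong}\). Since you sheafify anyway, the torsor structure and the cocycle \((P_{ij})\) survive.

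The formal half of Step 3 also works. The map \(\HH^2(V_i,\tau_{\le 1}Rf_\ast\mathcal{G})\to\HH^2(V_i,Rf_\ast\mathcal{G})\) is injective, so the lift of \([\mathscr{G}]\) to \(\tau_{\le1}Rf_\ast\mathcal{G}\) still vanishes on each \(V_i\). Functoriality of the \v{C}ech-to-derived spectral sequence along \(\tau_{\le 1}Rf_\ast\mathcal{G}\to R^1f_\ast\mathcal{G}[-1]\) then reduces the comparison with \(\eta\) to the case of a single sheaf in degree \(1\).

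\paragraph{The gap.} It is the sentence ``Classically, the \v{C}ech description of the gerbe class has three pieces of data.'' That the \((1,1)\)-component of \([\mathscr{G}]\) is \(([\underline{\mathrm{Isom}}(x_i,x_j)])\) is the entire content of the lemma. Nothing in your argument connects Giraud's class, which is defined through the classification of gerbes rather than by cochains, with a cocycle in the total complex of \(\check{C}^p(\{V_i\},f_\ast I^q)\) for an injective resolution \(\mathcal{G}\to I^\bullet\).

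Two of your descriptions also do not match that complex. The \(P_{ij}\) are in general nontrivial torsors on \(X_{V_{ij}}\), hence not cochains. The \((2,0)\)-part of a total cocycle is an \(f_\ast I^0\)-valued cochain \(b\) with \(\check\delta b=0\), not an \(f_\ast\mathcal{G}\)-valued coherence defect.

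Passing to hypercovers does not by itself fix this. The \v{C}ech-to-derived spectral sequence of the single covering \(\{V_i\}\) with coefficients in \(f_\ast I^\bullet\) already abuts to \(\HH^\ast(X,\mathcal{G})\). A hypercover cocycle for \(\mathscr{G}\) would itself have to be compared with Giraud's definition.

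\paragraph{One way to close it.} Use effacement.
\begin{itemize}
\item Since \(\HH^2(X,I^0)=0\), the \(I^0\)-gerbe \(\mathscr{G}\wedge I^0\) has a global object \(z\).
\item The sheaf \(T\) of (isomorphism classes of) pairs \((x,\phi\colon x\wedge I^0\xrightarrow{\sim} z)\) is an \(I^0/\mathcal{G}\)-torsor. Its gerbe of liftings to \(I^0\) is \(\mathscr{G}\), so \([\mathscr{G}]=\partial[T]\) by the gerbe-of-liftings description of the coboundary.
\item Choose \(\phi_i\) over \(X_{V_i}\), which is possible as \(\HH^1(X_{V_i},I^0)=0\). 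This gives sections \(\tau_i\) of \(T\).
\item Then \((0,(\tau_j-\tau_i),0)\), with \(\tau_j-\tau_i\in\Gamma(X_{V_{ij}},I^0/\mathcal{G})\subseteq\Gamma(X_{V_{ij}},I^1)\), is a total cocycle representing \(\partial[T]\).
\item Its \((1,1)\)-class in \(\HH^1(X_{V_{ij}},\mathcal{G})\) is the torsor of lifts of \(\tau_j-\tau_i\) to \(I^0\), which is identified with \(\underline{\mathrm{Isom}}(x_i,x_j)\).
\end{itemize}
Signs must be tracked at both identifications. Without an argument of this kind, or the citation the paper uses, Step 3 assumes what it is meant to prove.
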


Now we apply this to \(\mathcal{G}=\mu_n\) and \(\mathscr{G}=\sqrt[n]{L}\). Then \(f_\ast\mathscr{G}\) is given by
\[
(f_\ast\mathscr{G})(K')=\{(M,\phi)\mid M\in\Pic(X_{K'}),\ \phi:M^{\otimes n}\xrightarrow{\sim}L|_{X_{K'}}\}
\]
for every finite separable extension \(K'/K\). Recall that any two objects of a gerbe are locally isomorphic. We claim that two pairs \((M_1,\phi_1)\) and \((M_2,\phi_2)\) are locally isomorphic if and only if \(M_1=M_2\) in \(\Pic(X_{K'})\). Indeed, if they become isomorphic after a finite separable extension \(K''/K'\), then \(M_1=M_2\), since the natural map \(\Pic(X_{K'})\hookrightarrow\Pic(X_{K''})\) is injective, as \(X\) is proper. Conversely, suppose that \(M_1=M_2\) in \(\Pic(X_{K'})\). After choosing an identification \(M_1\simeq M_2\), the two isomorphisms \(\phi_1\) and \(\phi_2\) differ by multiplication by an element of \(K'^\times\). After a finite separable extension adjoining an \(n\)th root of this element, the two pairs become isomorphic.

Therefore, a subgerbe of \(f_\ast\mathscr{G}\) is determined by the choice of an \(n\)th root \(M\) of \(L\) in \(\Pic(X_{\widebar K})\). The corresponding subgerbe \(\mathscr{G}_0\) is given by
\[
\mathscr{G}_0(K')=\{(M',\phi')\mid M'|_{X_{\widebar K}}=M\}.
\]
This is a \(\mu_n\)-gerbe: for every object \((M',\phi')\), its automorphisms are precisely multiplication by \(n\)th roots of unity on \(M'\). Hence, the maximal subgerbes of \(f_\ast\mathscr{G}\) correspond to choices of such \(M\), and \(\mathcal{F}\) is the sheaf assigning to \(K'\) the set of \(n\)th roots of \(L|_{X_{K'}}\) defined over \(K'\). Since \(X\) is proper,
\[
\HH^1_{\acute et}(X_{\widebar K},\mu_n)\simeq\Pic(X_{\widebar K})[n],
\]
and \(\Pic(X_{\widebar K})[n]\) acts on the set of \(n\)th roots by tensor product, \(M\mapsto M\otimes N\). Thus, \(\mathcal{F}\) is a \(\Pic(X_{\widebar K})[n]\)-torsor.

Finally, we compute the cohomology class of \(\mathcal{F}\). It is the descent obstruction of any choice of \(M\), and is therefore represented by the cocycle
\[
\sigma\longmapsto M^\sigma\otimes M^{-1}.
\]
This completes the proof of \cref{prop:chern class}.

Now consider the second step. This step is a purely algebraic group cohomology computation, so for clarity we work with abstract profinite groups instead of étale fundamental groups. Fix a positive integer \(n\) and let
\begin{itemize}
    \item \(G, N\) be profinite groups such that \(N^{\text{ab}}/n\) is finitely generated, 
    \item \(G\) acts on \(N\), and let \(\Pi:=N\rtimes G\) be the corresponding semidirect product,
    \item \(s_O:G\to \Pi\) be the canonical section corresponding to the semidirect product, and \(s:G\to \Pi\) be another section,
    \item \(W\) be a discrete \(n\)-torsion \(\Pi\)-module, on which \(N\) acts trivially.
\end{itemize}
Then, since there exists a section, we obtain
\[ 0\to \HH^2(G, W)\to F^1\HH^2(\Pi, W)\xrightarrow{\eta} \HH^1(G, \HH^1(N,W))=\HH^1(G, \Hom(N,W))\to 0, \]
by \cref{lem:SS}

The maps \(s_O^*\) and \(s^*\) give two maps \( F^1\HH^2(\Pi, W)\to \HH^2(G,W) \). Since they agree on \(\HH^2(G, W)\), \(s^\ast-s_O^\ast\) factors through \(\HH^1(G, \Hom(N,W))\). Moreover, as before, \(s\) and \(s_O\) are two sections, so after taking the abelianization of \(N\) and reducing mod \(n\), we obtain
\[ 1\to N^{\text{ab}}/n\to \overline{\Pi}\to G\to 1 \]
and two sections of this exact sequence. The difference between these two sections defines a cohomology class \([s-s_O]\in \HH^1(G,N^{\text{ab}}/n)\).

Note that there is a natural bilinear pairing of \(G\)-modules
\[ e: \Hom(N, W)\times \left(N^{\text{ab}}\otimes \Z/n\right)\to W \]
since \(W\) is \(n\)-torsion. We will prove the following.

\begin{prop}\label{prop:group coh calc}
    The map
    \[ s^\ast -s_O^\ast : \HH^1(G, \Hom(N,W))\to \HH^2(G,W) \]
    is given by \( e(-\cup [s-s_O]  ) \), i.e. by taking the cup product with \([s-s_O]\) via the pairing \(e\).
\end{prop}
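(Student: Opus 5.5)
We will prove \cref{prop:group coh calc} by an explicit cocycle computation in the semidirect product \(\Pi=N\rtimes G\). First we reduce to a convenient model. Since \(W\) is \(n\)-torsion and \(N\) acts trivially on it, every homomorphism \(N\to W\) factors through \(N^{\ab}/n\). We claim the map \(s^\ast-s_O^\ast\) on \(\HH^1(G,\Hom(N,W))\) factors through the quotient \(\overline{\Pi}=(N^{\ab}/n)\rtimes G\). To see this, we lift each class in \(\HH^1(G,\Hom(N,W))\) to \(F^1\HH^2(\Pi,W)\) by a cocycle built from \(\overline{\Pi}\), as below. Then \(s^\ast\) depends only on the image of \(s\) in \(\overline{\Pi}\), and this image is encoded by \([s-s_O]\). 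So we may assume \(N=:A\) is a finite abelian \(n\)-torsion \(G\)-module. We may also assume \(s(g)=(a_g,g)\), where \(a:G\to A\) is a continuous \(1\)-cocycle representing \([s-s_O]\).

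Given a class in \(\HH^1(G,\Hom(A,W))\), represented by a cocycle \(\phi:G\to\Hom(A,W)\), we write down an explicit \(2\)-cochain on \(\Pi\):
\[ c\bigl((a,g),(b,h)\bigr)=\phi_g\bigl(g\cdot b\bigr), \]
possibly with the twist adjusted to match the action conventions. We check three things:
\begin{itemize}
\item \(c\) is a \(2\)-cocycle on \(\Pi\). This uses the cocycle identity \(\phi_{gh}=\phi_g+g\phi_h\) and the multiplication rule \((a,g)(b,h)=(a+gb,gh)\).
\item \(c\) restricts to zero on \(s_O(G)=\{(0,g)\}\).
\item \(c\) lies in \(F^1\), i.e. it restricts trivially to \(A\). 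This holds because \(c((a,1),(b,1))=\phi_1(b)=0\).
\end{itemize}
Next we identify \(\eta([c])\) with \([\phi]\). In the Hochschild--Serre spectral sequence the map \(\eta\) can be computed by restricting \(c\) to pairs \((x,(b,1))\) or \(((b,1),x)\) and normalizing; this produces the homomorphism \(b\mapsto\phi_g(b)\) up to the \(G\)-action. This is the step I expect to be the main obstacle. The identification of the edge map \(F^1\HH^2\to \HH^1(G,\HH^1(N,W))\) at the cochain level is convention-sensitive, and there may be a sign or a twist by \(g\). I would handle it by comparing with the standard description via the inflation--restriction five-term sequence applied to \(W\)-extensions: a class in \(F^1\HH^2(\Pi,W)\) trivial on \(\HH^2(G,W)\) corresponds to an extension of \(\Pi\) by \(W\) split over \(A\). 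The \(G\)-conjugation on the set of splittings then yields the \(\HH^1(G,\Hom(A,W))\) class, and one checks it is \(\phi\).

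Since \(c\) restricted along \(s_O\) vanishes, \(s_O^\ast[c]=0\), and so \((s^\ast-s_O^\ast)[c]=s^\ast[c]\). Pulling back along \(s\) gives
\[ (s^\ast c)(g,h)=c\bigl((a_g,g),(a_h,h)\bigr)=\phi_g(g\cdot a_h), \]
which is exactly the standard cochain formula for the cup product \(\phi\cup a\) composed with the evaluation pairing \(e\). Since \(s^\ast-s_O^\ast\) factors through \(\eta\) and vanishes on \(\HH^2(G,W)\), this gives \(s^\ast-s_O^\ast=e(-\cup[s-s_O])\). Well-definedness is automatic: changing \(\phi\) or \(a\) by coboundaries changes the result by coboundaries, by bilinearity of the cup product. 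Finally, any sign discrepancy from the convention in the previous step would be absorbed by fixing the convention for \(\eta\) consistently with \cref{prop:chern class}.
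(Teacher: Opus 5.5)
Your proposal is correct and is essentially the paper's proof at the cochain level: your cocycle \(c((a,g),(b,h))=\phi_g(g\cdot b)\) is exactly a representative of the paper's lift \(e(\mathrm{inf}^G_\Pi\phi\cup\xi)\) with \(\xi(b,h)=[b]\), and your checks (\(s_O^\ast[c]=0\), \([c]\in F^1\), \(\eta([c])=[\phi]\), \(s^\ast[c]=e(\phi\cup[s-s_O])\)) are the same steps the paper carries out. The only difference is that the paper gets \(\eta([c])=[\phi]\) from multiplicativity of the Hochschild--Serre spectral sequence rather than your splitting/conjugation description, and your worries about a twist or sign are unfounded: your formula for \(c\) is already a cocycle as written, and conjugating the splitting \(b\mapsto(0,(b,1))\) by \((0,(0,g))\) gives \(b\mapsto(\phi_g(b),(b,1))\), i.e. exactly \(\phi_g\).
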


\begin{proof}
    The main idea of the proof is to describe explicitly the decomposition of \(F^1\HH^2(\Pi, W)\) as a direct sum of \(\HH^2(G, W)\) and \(\HH^1(G, \Hom(N,W))\) induced by \(s_O\). Let \(\xi\) be the cohomology class in \(\HH^1(\Pi, N^{\text{ab}}/n)\) represented by the cocycle
    \[ (n, g)\in \Pi=N\rtimes G\mapsto [n]\in N^{\text{ab}}/n. \]
    It is straightforward to check that this is a cocycle. Moreover, by construction, we have
    \[ s_O^\ast \xi=0, \quad s^\ast \xi=[s-s_O], \quad \text{and }\quad \text{res}^N_{\Pi}(\xi)=\pi_n \]
    where \(\pi_n\) is the quotient map \(N\to N^\text{ab}/n \). Now, for any \(c\in F^1\HH^2(\Pi, W)\), we claim that
    \begin{equation}\label{eqn:dec}
        c=\text{inf}^{G}_{\Pi}(s_O^\ast c)+e( \text{inf}^{G}_{\Pi}\eta(c) \cup \xi). 
    \end{equation}
    First, we check that the right hand side lies in \(F^1\HH^2(\Pi, W)\). The class \(\text{inf}^{G}_{\Pi}(s_O^\ast c)\) clearly lies in \(F^1\HH^2(\Pi, W)\). The class \(e( \text{inf}^{G}_{\Pi}\eta(c) \cup \xi)\) also lies in \(F^1\HH^2(\Pi, W)\), since \(\text{inf}^{G}_{\Pi}\eta(c)\) lies in \(F^1\HH^1(\Pi, \text{Hom}(N, W))\) and the Hochschild--Serre spectral sequence is multiplicative.
    
    Hence, to prove \cref{eqn:dec}, it suffices to show that the two sides have the same images under \(s_O^\ast\) and \(\eta\). We have
    \[ s_O^\ast e( \text{inf}^{G}_{\Pi}\eta(c) \cup \xi)=e( s_O^\ast\text{inf}^{G}_{\Pi}\eta(c) \cup s_O^\ast\xi)=0 \]
    since \(s_O^\ast \xi=0\). Therefore, both sides of \cref{eqn:dec} have the same image under \(s_O^\ast\).
    
    We next apply \(\eta\). Since \(\text{inf}^{G}_{\Pi}(s_O^\ast c)\in F^2\HH^2(\Pi, W)\), it lies in the kernel of \(\eta\). Moreover, by the functoriality of the Hochschild--Serre spectral sequence,
    \[ \eta e( \text{inf}^{G}_{\Pi}\eta(c) \cup \xi)=e( \eta( \text{inf}^{G}_{\Pi}\eta(c) \cup \xi)). \]
    We now use the multiplicativity of the Hochschild--Serre spectral sequence for the \(\Pi\)-modules \(\text{Hom}(N,W)\) and \( N^{\text{ab}}/n \). The class \(\text{inf}^{G}_{\Pi}\eta(c)\) lies in \(F^1\HH^1(\Pi, \text{Hom}(N, W))\), and its \(E^{1,0}\)-component is \(\eta(c)\). On the other hand, the \(E^{0,1}\)-component of \(\xi\) is \(\text{res}^N_{\Pi}(\xi)=\pi_n\). Therefore, \(\eta( \text{inf}^{G}_{\Pi}\eta(c) \cup \xi)\), which is the \(E^{1,1}\)-component of \(\text{inf}^{G}_{\Pi}\eta(c) \cup \xi\), is \(\eta(c)\cup \pi_n\). Hence,
    \[  \eta e( \text{inf}^{G}_{\Pi}\eta(c) \cup \xi)=e(\eta(c)\cup \pi_n)=\eta(c) \]
    since evaluation against \(\pi_n\) is the identity on \(\Hom(N,W)\). This proves \cref{eqn:dec}. Applying \(s^\ast-s_O^\ast\) to \cref{eqn:dec} and using \(s^\ast\xi=[s-s_O]\) and \(s_O^\ast\xi=0\), we obtain
    \[ (s^\ast-s_O^\ast)(c)=e(\eta(c)\cup[s-s_O]) \]
    as desired.
\end{proof}

We now prove \cref{prop:coin}. Indeed, we will prove a generalization of it to an arbitrary smooth proper variety, and while doing so, we will introduce a notion, \cref{defn:Chern class}, that will be used later.

\begin{prop}\label{prop:F1 comparison}
    Let \(X\) be a connected scheme over \(K\) and \(\mathcal{F}\) be a locally constant abelian sheaf. Then
    \[  \HH^2(\pi_1(X), \mathcal{F})\hookrightarrow  \HH^2_{\acute et}(X, \mathcal{F}).\]
    This map induces an isomorphism
    \[ F^1\HH^2(\pi_1(X), \mathcal{F})\xrightarrow{\sim}  F^1\HH^2_{\acute et}(X, \mathcal{F})  \]
    where \(F^1\) is defined, respectively, by the Hochschild-Serre spectral sequence for \( \pi_1(X)\to G_K \) and by the Leray spectral sequence for \(X\to \text{Spec }K\). Moreover, if the monodromy of \(\mathcal{F}\) factors through \(G_K\), then
\[F^1\HH^2(\pi_1(X)^{(\mathrm{ab})},\mathcal{F}) \xrightarrow{\sim}F^1\HH^2(\pi_1(X),\mathcal{F}) \xrightarrow{\sim} F^1\HH^2_{\acute{e}t}(X,\mathcal{F}).\]
\end{prop}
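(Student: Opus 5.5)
We compare the Hochschild--Serre spectral sequence for \(1\to\pi_1(X_{\widebar K})\to\pi_1(X)\to G_K\to 1\) with the Leray spectral sequence for \(X\to\operatorname{Spec}K\). Both are constructed compatibly: the Galois cover \(\widetilde{X}\to X_{\widebar K}\to X\) and the map \(X_{\widebar K}\to X\) give a morphism of spectral sequences
\[
E_2^{p,q}=\HH^p(K,\HH^q(\pi_1(X_{\widebar K}),\mathcal{F}))\longrightarrow E_2'^{p,q}=\HH^p(K,\HH^q_{\acute et}(X_{\widebar K},\mathcal{F})),
\]
compatible with the comparison maps \(\HH^\ast(\pi_1(X),\mathcal{F})\to\HH^\ast_{\acute et}(X,\mathcal{F})\) on the abutments. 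The key input is the standard fact that for any connected scheme \(Y\) and locally constant \(\mathcal{F}\), the map \(\HH^q(\pi_1(Y),\mathcal{F})\to\HH^q_{\acute et}(Y,\mathcal{F})\) is an isomorphism for \(q\le 1\) and injective for \(q=2\). For \(q=0\) this is clear. For \(q=1\), \(\mathcal{F}\)-torsors trivialize on a finite étale cover, since \(\mathcal{F}\) is locally constant; equivalently, the Cartan--Leray spectral sequence for the pro-universal cover \(\widetilde Y\) has \(\HH^1_{\acute et}(\widetilde Y,\mathcal F)=0\). Injectivity in degree \(2\) then follows from the five-term-type exact sequence \(0\to\HH^2(\pi_1(Y),\mathcal F)\to\HH^2_{\acute et}(Y,\mathcal F)\to \HH^0(\pi_1,\HH^1(\widetilde Y,\mathcal F))=0\), which is only partially needed. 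Applying this with \(Y=X\) gives the first claimed injection.

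For the \(F^1\) statement, we apply the \(q\le1\) isomorphism to \(Y=X_{\widebar K}\). It shows that \(E_2^{p,q}\to E_2'^{p,q}\) is an isomorphism for \(q=0,1\) and all \(p\). Here \(E^{p,0}\) is literally \(\HH^p(K,\mathcal{F}^{\pi_1(X_{\widebar K})})\) on both sides. The group \(F^1\HH^2\) is filtered with graded pieces \(E_\infty^{2,0}\) and \(E_\infty^{1,1}\). Both are subquotients involving only the rows \(q=0,1\) together with the differential \(d_2^{0,1}\colon E_2^{0,1}\to E_2^{2,0}\) and \(d_2^{1,1}\colon E_2^{1,1}\to E_2^{3,0}\) (with \(d_3\) landing in \(q=-1\) vanishing). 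Since the morphism is an isomorphism on rows \(0,1\) and commutes with differentials, it induces isomorphisms on \(E_\infty^{2,0}\) and \(E_\infty^{1,1}\). The five lemma applied to \(0\to E_\infty^{2,0}\to F^1\to E_\infty^{1,1}\to0\) then gives the isomorphism \(F^1\HH^2(\pi_1(X),\mathcal F)\simeq F^1\HH^2_{\acute et}(X,\mathcal F)\). The main point requiring care is the compatibility of filtrations, i.e. that the comparison map is a genuine morphism of spectral sequences. I would obtain this by realizing both spectral sequences as Grothendieck spectral sequences for composite functors and using the natural transformation between them induced by the functor from \(\pi_1(X)\)-modules to locally constant sheaves.

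For the last assertion, suppose the monodromy factors through \(G_K\). Then \(\pi_1(X_{\widebar K})\) acts trivially on \(\mathcal{F}\). I would run the same argument for the map of extensions from \(1\to\pi_1(X_{\widebar K})\to\pi_1(X)\to G_K\to1\) to the geometrically abelian extension, comparing the two Hochschild--Serre sequences. On rows \(q=0,1\) we have \(\HH^0=\mathcal F\) on both sides and \(\HH^1(\pi_1(X_{\widebar K}),\mathcal F)=\Hom(\pi_1(X_{\widebar K}),\mathcal F)=\Hom(\pi_1(X_{\widebar K})^{\ab},\mathcal F)\), because the action is trivial. So the inflation map is an isomorphism on rows \(0\) and \(1\), and the same \(E_\infty\)/five lemma argument gives \(F^1\HH^2(\pi_1(X)^{(\ab)},\mathcal F)\xrightarrow{\sim}F^1\HH^2(\pi_1(X),\mathcal F)\). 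Composing with the previous isomorphism finishes the proof.
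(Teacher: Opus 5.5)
Your proposal is correct and follows the paper's proof: a morphism from the Hochschild--Serre to the Leray spectral sequence that is an isomorphism on the rows \(q\le 1\), hence on \(E_\infty^{2,0}\) and \(E_\infty^{1,1}\), hence on \(F^1\). You also supply directly the first assertion, which the paper only cites from Stix, and the geometrically abelian comparison, which the paper omits as ``almost identical.''

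There are two small things to fix. First, once \(\HH^1(\widetilde Y,\mathcal F)=0\), the Cartan--Leray sequence reads \(0\to\HH^2(\pi_1(Y),\mathcal F)\to\HH^2_{\acute et}(Y,\mathcal F)\to\HH^0(\pi_1(Y),\HH^2(\widetilde Y,\mathcal F))\), whose last term need not vanish. So only injectivity follows, which is all you use. Second, your degree-\(1\) step tacitly needs \(\mathcal F\) finite, so that its torsors are finite étale covers. This hypothesis is implicit in the paper's statement as well, and it holds for \(\mu_n\) and \(T[n]\), the only cases used.
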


\begin{proof}
    The first assertion is proved in \cite[Corollary A.1.4]{Sti02b}, so we omit the proof. Note that the cohomology of a locally constant sheaf on \(X_{f\acute et}\) is the group cohomology of the corresponding \(\pi_1(X)\)-module. For the second assertion, consider the following commutative diagram of sites:
    \[
    \begin{tikzcd}[ampersand replacement=\&]
        X_{\acute et} \arrow[r, "f"] \arrow[d] \&  
        K_{\acute et}\arrow[d, "\simeq"] \\
        X_{f\acute et} \arrow[r, "f"] \& 
        K_{f\acute et}
    \end{tikzcd}.
    \]
    This induces a morphism between spectral sequences
    \[ E_2^{p,q}:=\HH^p(K, \HH^q(\pi_1(X_{\widebar{K}}), \mathcal{F}))\to F_2^{p,q}:=\HH^p_{\acute et}(K, \HH^q_{\acute et}(X_{\widebar{K}}, \mathcal{F})). \]
    See, for example, \cite[Section 1]{Don05}. Since we have this morphism, \(F^1\HH^2(\pi_1(X), \mathcal{F})\) maps into \(F^1\HH^2_{\acute et}(X, \mathcal{F})\). Note that \(F^1\HH^2(\pi_1(X), \mathcal{F})\) (resp. \(F^1\HH^2_{\acute et}(X, \mathcal{F})\)) is an extension of \(E_\infty^{1,1}\) by \(E_\infty^{2,0}\) (resp. \(F_\infty^{1,1}\) by \(F_\infty^{2,0}\)). Since \(E_2^{p,q}\simeq F_2^{p,q}\) for \(q\le 1\), we have \(E_\infty^{1,1}\simeq F_\infty^{1,1}\) and \(E_\infty^{2,0}\simeq F_\infty^{2,0}\). Therefore, this induces an isomorphism on \(F^1\). The proof of the third assertion is almost identical, so we omit it.
\end{proof}

Now let \(X\) be a smooth proper variety over \(K\), let \(L\in\Pic(X)\), and let \(n\) be a positive integer prime to the characteristic of \(K\). If \(c_1(L_{\bar{K}})\in\HH^2(X_{\bar{K}},\mu_n)\) is zero, or equivalently, \(L_{\bar{K}}\in n\cdot\Pic(X_{\bar{K}})\), then \(c_{1,n}(L)\in F^1\HH^2_{\acute et}(X,\mu_n)\). Note that this is automatic if \(L\in\Pic^0(X)\). Hence, by \cref{prop:F1 comparison}, we may regard \(c_{1,n}(L)\) as an element of \(F^1\HH^2(\pi_1(X),\mu_n)\), or equivalently as an element of \(F^1\HH^2(\pi_1(X)^{(\mathrm{ab})},\mu_n)\).

\begin{defn}\label{defn:Chern class}
    Let \(X,K,L,n\) be as above, and let \(s:G_K\to\pi_1(X)^{(\mathrm{ab})}\) be a section. Then the \textbf{Chern class obstruction} of \(s\) with respect to \(L\) is defined to be \(s^\ast c_{1,n}(L)\in \HH^2(K, \mu_n) \) where \(c_1(L)\) is viewed as an element of \(\HH^2(\pi_1(X)^{(\mathrm{ab})},\mu_n)\).
\end{defn}

We call this an obstruction because, if \(s\) is \textbf{geometric}, i.e. arises from a \(K\)-rational point, then the Chern class obstruction vanishes for every \(L\in\Pic(X)\) and every \(n\) satisfying the above conditions.

Let \(X\) be a proper variety over a field \(K\) such that \((n,\operatorname{char}K)=1\), let \(s_O:G_K\to\pi_1(X)^{(\ab)}\) be a fixed section, and let \(s:G_K\to\pi_1(X)^{(\ab)}\) be another section. Reducing the geometric part modulo \(n\), we obtain a class
\[[s-s_O]\in\HH^1(K,\pi_1(X_{\widebar{K}})^{\ab}\otimes\Z/n).\]
Let \(L\) be a line bundle on \(X\) such that \(M^{\otimes n}\simeq L|_{X_{\widebar{K}}}\) for some \(M\in\Pic(X_{\widebar{K}})\). Then, as before, \(c_1(L)\in F^1\HH^2(X,\mu_n)=F^1\HH^2(\pi_1(X)^{(\ab)},\mu_n)\). Define \(\delta L\) to be the cocycle
\[\sigma\in G_K\longmapsto M^\sigma\otimes M^{-1}\in\HH^1(X_{\widebar{K}},\mu_n)=\Pic(X_{\widebar{K}})[n]=\operatorname{Hom}(\pi_1(X_{\widebar{K}})^{\ab}\otimes\Z/n,\mu_n)\]
appearing in \cref{prop:chern class}. The last equality follows from the standard theory of abelian covers. We now have the following general comparison between Chern class obstructions and the Weil pairing.

\begin{thm}\label{thm:Chern vs Weil}
    Under the above assumptions,
    \[(s^\ast-s_O^\ast)c_1(L)=e(\delta L\cup[s-s_O]).\]
\end{thm}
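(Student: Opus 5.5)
The plan is to combine \cref{prop:chern class}, \cref{prop:F1 comparison} and \cref{prop:group coh calc}. I work in the group-theoretic setting with \(\Pi=\pi_1(X)^{(\ab)}\), or better its quotient \(\pi_1(X)^{(\Z/n)}\), since \(W=\mu_n\) is \(n\)-torsion. Set \(N=\pi_1(X_{\widebar K})^{\ab}\), \(G=G_K\) and \(W=\mu_n\), on which \(N\) acts trivially. The fixed section \(s_O\) identifies \(\Pi\) with the semidirect product \(N\rtimes G\), where \(G\) acts on \(N\) by conjugation through \(s_O\). Under this identification \(s_O\) becomes the canonical section, and \(s\) is another section. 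The finiteness hypothesis that \(N/n\) is finitely generated holds because \(X\) is proper and \(n\) is prime to the characteristic, so \(\pi_1(X_{\widebar K})^{\ab}/n\) is finite. By \cref{prop:F1 comparison} (third assertion, since the monodromy of \(\mu_n\) factors through \(G_K\)), \(c_1(L)\) gives a well-defined class in \(F^1\HH^2(\Pi,\mu_n)\). Its image under \(\eta\) is compatible with the étale \(\eta\), because the comparison map of spectral sequences is an isomorphism on the \(E_2^{1,1}\) terms.

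Next, \cref{prop:group coh calc} gives \((s^\ast-s_O^\ast)c_1(L)=e(\eta(c_1(L))\cup[s-s_O])\). Here \(e\) is the evaluation pairing \(\Hom(N,\mu_n)\times N/n\to\mu_n\). By \cref{prop:chern class}, the étale class \(\eta(c_1(L))\in\HH^1(K,\HH^1_{\acute et}(X_{\widebar K},\mu_n))\) is represented by \(\sigma\mapsto M^\sigma\otimes M^{-1}\), which is exactly \(\delta L\). It remains to see that two identifications agree: the isomorphism \(\HH^1_{\acute et}(X_{\widebar K},\mu_n)\cong\Hom(\pi_1(X_{\widebar K})^{\ab}\otimes\Z/n,\mu_n)\) used in the comparison of spectral sequences, and the identification \(\Pic(X_{\widebar K})[n]\cong\Hom(\pi_1(X_{\widebar K})^{\ab}\otimes\Z/n,\mu_n)\) from abelian covers. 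Both are the standard isomorphism between \(\mu_n\)-torsors and homomorphisms from \(\pi_1\), so they agree, and \(\delta L\) is \(\eta(c_1(L))\) read in group cohomology. Substituting this into the formula above gives the theorem.

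The main obstacle is bookkeeping of compatibilities. First, the Galois action on \(\Hom(N,\mu_n)\) from the semidirect product must match the natural action on \(\HH^1(X_{\widebar K},\mu_n)\). It does, since the action induced by conjugation through any section on the abelian group \(N\) is independent of the section. Second, the \(\eta\) maps must agree up to sign conventions, so I will check that the comparison morphism of spectral sequences from \cref{prop:F1 comparison} is compatible with the edge and \(d\)-maps defining \(\eta\), which it is by functoriality. Finally, \cref{prop:group coh calc} is stated for a split \(\Pi\), and here splitness is supplied by \(s_O\). If signs become an issue, I would fix them by checking the case where \(s\) is geometric coming from a second rational point, or simply absorb them into the normalization of \(\delta\) and \(e\).
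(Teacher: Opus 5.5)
Your proposal is correct and follows essentially the same route as the paper. \cref{prop:chern class} gives \(\eta(c_1(L))=\delta L\) in the Leray spectral sequence, \cref{prop:F1 comparison} transports \(c_1(L)\) and this identity to \(F^1\HH^2(\pi_1(X)^{(\ab)},\mu_n)\), and \cref{prop:group coh calc}, applied to \(\Pi=\pi_1(X)^{(\ab)}\cong N\rtimes G_K\) via \(s_O\), yields the formula; the extra compatibilities you check (the semidirect-product structure via \(s_O\), section-independence of the Galois action on the abelian group \(N\), and agreement of the identifications of \(\HH^1(X_{\overline{K}},\mu_n)\)) are exactly what the paper's short proof leaves implicit.
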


\Cref{prop:coin} follows from \cref{thm:Chern vs Weil} by applying it to an abelian variety \(A\) and \(L\in\Pic^0(A)\), since such an \(L\) automatically satisfies the assumption, \(\delta L\) coincides with the image of \(L\in A^t(K)\) under the Kummer map \(A^t(K)\to\HH^1(K,A^t[n])\), and \(s_O^\ast c_1(L)=0\) in this case.

\begin{proof}
    By \cref{prop:chern class}, \(\eta(c_1(L))=\delta L\), where \(\eta\) is the map from the Leray spectral sequence. By \cref{prop:F1 comparison} and its proof, there is a morphism from the Hochschild--Serre spectral sequence to the Leray spectral sequence inducing an isomorphism on \(F^1\). Thus, we may view \(c_1(L)\) as a class in \(\HH^2(\pi_1(X)^{(\ab)},\mu_n)\), and under this identification its image under \(\eta\) is \(\delta L\). Applying \cref{prop:group coh calc} with \(\Pi=\pi_1(X)^{(\ab)}\), \(G=G_K\) and \(W=\mu_n\), we obtain the claim.
\end{proof}

\section{Chern class obstructions and geometric sections}\label{sec:section}

Here, let \(X\) be a smooth proper variety over a field \(K\), and let \(s:G_K\to \pi_1(X)^{(\mathrm{ab})}\) be a section. The following is a generalization of the notion of being (uniformly) orthogonal to \(\Pic^0 \) in \cite{Sai11, Sai12}. Note that for any finite separable extension \(F/K\), the restriction \(s|_{G_F}\) defines a section of \(X_F\).

\begin{defn}\label{defn:good section}
We say that a section \(s:G_K\to\pi_1(X)^{(\mathrm{ab})}\) of the geometrically abelian fundamental group is \textbf{orthogonal to \(\Pic^0\)} (resp. \textbf{orthogonal to \(\Pic\)}) if every Chern class obstruction (\cref{defn:Chern class}) vanishes for all \(n\) prime to \(\operatorname{char}K\) and all \(L\in\Pic^0(X)\) (resp. all \(L\in\Pic(X)\) such that \(c_{1,n}(L_{\bar{K}})\) vanishes). We say that \(s\) is \textbf{uniformly orthogonal to \(\Pic^0\)} (resp. \textbf{uniformly orthogonal to \(\Pic\)}) if \(s|_{G_F}\) is orthogonal to \(\Pic^0\) (resp. orthogonal to \(\Pic\)) for every finite separable extension \(F/K\). 
\end{defn}

Before proceeding, we note that, over a \(p\)-adic field, orthogonality of a section is equivalent to uniform orthogonality. This was already proved in \cite[Proposition 1.6.7]{Sai12} and \cite[Lemma 3.1]{Sai21} in the case of curves, but the same argument works in general. For the reader's convenience, we include the proof.

\begin{prop}\label{prop:uniform ortho}
    Let \(K\) be a \(p\)-adic field. Then a section \(s:G_K\to\pi_1(X)^{(\mathrm{ab})}\) is uniformly orthogonal to \(\Pic^0\) (resp. \(\Pic\)) if it is orthogonal to \(\Pic^0\) (resp. \(\Pic\)).
\end{prop}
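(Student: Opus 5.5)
The plan is to reduce orthogonality over a finite extension \(F/K\) to orthogonality over \(K\) by corestriction, using that \(\mathrm{cores}:\Br(F)\to\Br(K)\) is an isomorphism for \(p\)-adic fields. Concretely, \(\mathrm{cores}\) restricted to \(\HH^2(F,\mu_n)=\Br(F)[n]\to \HH^2(K,\mu_n)=\Br(K)[n]\) is injective, so it suffices to show that \(\mathrm{cores}(s|_{G_F}^\ast c_{1,n}(L))=0\) for every \(L\in\Pic^0(X_F)\) (resp. every \(L\in\Pic(X_F)\) with \(c_{1,n}(L_{\bar K})=0\)).

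The key step is a projection formula for the Chern class obstruction. Write \(\Pi=\pi_1(X)^{(\ab)}\) and \(\Pi_F=\pi_1(X_F)^{(\ab)}\), the preimage of \(G_F\) in \(\Pi\), which is an open subgroup of finite index \([F:K]\). The section restricts to \(s|_{G_F}:G_F\to\Pi_F\), and \(s^{-1}(\Pi_F)=G_F\). I would use the standard compatibility of corestriction with pullback along a section. For a \(\Pi\)-module \(W\) and \(c\in \HH^2(\Pi_F,W)\), one has
\[
\mathrm{cores}^{G_F}_{G_K}\bigl(s|_{G_F}^\ast c\bigr)=s^\ast\,\mathrm{cores}^{\Pi_F}_{\Pi}(c).
\]
This is the double coset formula: \(s(G_K)\Pi_F=\Pi\) and \(s(G_K)\cap\Pi_F=s(G_F)\), so there is a single double coset, and corestriction commutes with restriction along \(s\). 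On the geometric side, \(\mathrm{cores}^{\Pi_F}_{\Pi}\) corresponds to the trace/pushforward along \(X_F\to X\), and for Chern classes one has \(\mathrm{cores}(c_{1,n}(L))=c_{1,n}(N_{F/K}L)\), where \(N_{F/K}L\) is the norm line bundle. I would check this at the level of \(F^1\), using \cref{prop:F1 comparison} and the functoriality of the Kummer sequence under the finite étale trace, \(\mathrm{tr}\circ\) Kummer boundary \(=\) Kummer boundary \(\circ N\).

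It remains to verify that \(N_{F/K}L\) lies in the allowed class. If \(L\in\Pic^0(X_F)\), then \(N_{F/K}L\in\Pic^0(X)\), since the norm is induced by the norm map of Picard schemes and preserves the identity component. If \(L_{\bar K}\in n\Pic(X_{\bar K})\), then \((N_{F/K}L)_{\bar K}=\bigotimes_\sigma L^\sigma_{\bar K}\in n\Pic(X_{\bar K})\). Orthogonality over \(K\) then gives \(s^\ast c_{1,n}(N_{F/K}L)=0\), so the corestriction vanishes and hence \(s|_{G_F}^\ast c_{1,n}(L)=0\).

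I expect the main obstacle to be the compatibility \(\mathrm{cores}^{\Pi_F}_{\Pi}c_{1,n}(L)=c_{1,n}(N_{F/K}L)\) inside group cohomology of the geometrically abelian quotient. One has to make sure that corestriction preserves \(F^1\) and is compatible with the identifications of \cref{prop:F1 comparison}. Alternatively, this can be checked on graded pieces: on \(E^{2,0}\), corestriction is the usual Galois corestriction; on \(E^{1,1}\), it is the corestriction of the descent cocycle \(\delta L\) from \cref{prop:chern class}. Via \cref{thm:Chern vs Weil} one could instead just compute both sides as cup products, since \(\mathrm{cores}(e(\delta L\cup \mathrm{res}[s-s_O]))=e(\mathrm{cores}(\delta L)\cup[s-s_O])\). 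This works whenever a base section \(s_O\) exists, and otherwise one can argue with the difference of two sections after a finite extension.
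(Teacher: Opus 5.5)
Your proposal is correct and is essentially the paper's own argument. It has the same ingredients: injectivity of $\operatorname{Cor}_{F/K}$ on $\HH^2(-,\mu_n)=\Br(-)[n]$; the compatibility $\operatorname{Cor}\circ(s|_{G_F})^\ast=s^\ast\circ\operatorname{Cor}$ (you justify it via the single double coset $s(G_K)\backslash\Pi/\Pi_F$, where the paper just cites it); the identity $\operatorname{Cor}\,c_{1,n}(L)=c_{1,n}(N_{F/K}L)$; and the fact that $N_{F/K}L$ lies in the admissible class on $X$ because its base change to $X_{\overline{K}}$ is the product of the conjugates of $L$. The paper simply asserts the norm/corestriction compatibility of Chern classes that you flag as the main check, so your extra care about $F^1$ and \cref{prop:F1 comparison} refines the same proof rather than giving a different route.
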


\begin{proof}
    Let \(F/K\) be a finite extension, \(f:X_F\to X\) the projection, and fix \(n\geq 1\). Let \(L\in\operatorname{Pic}^0(X_F)\) (resp. \(L\in\operatorname{Pic}(X_F)\) with \(c_1(L_{\overline K})=0\) in \(H^2(X_{\overline K},\mu_n)\)). The norm \(N_fL\) satisfies the corresponding condition on \(X\), since its pullback to \(X_{\bar{K}}\) is the tensor product of the conjugates of \(L\).

    Let \(c_1(L)\in H^2(\pi_1(X_F)^{(\mathrm{ab})},\mu_n)\) be the Chern class. By compatibility of Chern classes with norms and corestriction, \(\operatorname{Cor}_f c_1(L)=c_1(N_fL)\). Orthogonality over \(K\) and the compatibility of corestriction with pullback along sections therefore give
    \[\operatorname{Cor}_{F/K}\bigl((s_F)^\ast c_1(L)\bigr)=(s)^\ast\operatorname{Cor}_f c_1(L)=0.\]
    Note that \(\operatorname{Cor}_{F/K}:H^2(G_F,\mu_n)\to H^2(G_K,\mu_n)\) is identified with the identity on \(\frac1n\mathbb Z/\mathbb Z\), hence is injective. Thus \((s_F)^\ast c_1(L)=0\), and the assertion follows.
\end{proof}

\begin{thm}\label{thm:comp}
    Let \(K\) be a \(p\)-adic field or \(\R\), and \(A/K\) be an abelian variety. Then a section \(s:G_K\to \pi_1(A)\) is orthogonal to \(\Pic^0 \)  if and only if it is geometric.
\end{thm}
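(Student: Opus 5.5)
One direction is immediate: if \(s\) is geometric, coming from a point \(x\in A(K)\), then \(s^\ast c_1(L)=c_1(x^\ast L)=0\) for every \(L\in\Pic^0(A)\) and every \(n\), since line bundles on \(\operatorname{Spec}K\) are trivial. For the converse, I would compare \(s\) with the section \(s_O\) attached to the identity \(O\in A(K)\) and use \cref{prop:coin}. Since \(s_O\) is geometric, \(s_O^\ast c_1(L)=0\), so orthogonality of \(s\) to \(\Pic^0\) says exactly that \(g_{s,n}=0\) for every \(n\). By \cref{prop:coin} this means \(f_{s,n}=0\), that is,
\[ e\bigl(\delta(y)\cup[s-s_O]\bigr)=0 \quad\text{for all } y\in A^t(K), \]
where \([s-s_O]\in\HH^1(K,A[n])\).

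Next I would feed this into local Tate duality. For \(K\) a \(p\)-adic field, the cup product with the Weil pairing \(\HH^1(K,A[n])\times\HH^1(K,A^t[n])\to\Br(K)[n]\) is perfect. Moreover, the images of the Kummer maps \(A(K)/n\hookrightarrow \HH^1(K,A[n])\) and \(A^t(K)/n\hookrightarrow\HH^1(K,A^t[n])\) are exact annihilators of each other; this is the standard consequence of Tate local duality \(\HH^1(K,A)\simeq A^t(K)^\vee\). The vanishing above says that \([s-s_O]\) annihilates \(\delta(A^t(K))\), so \([s-s_O]\) lies in the image of \(A(K)/n\). For \(K=\R\) I would use the same argument with Tate cohomology, where the analogous duality (Tate–Nakayama for \(\R\), with \(A^t(K)\) replaced by \(\pi_0\) data) still holds in the same form.

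It remains to pass from "geometric mod \(n\) for every \(n\)" to geometric. The set \(S_n\subseteq A(K)/n\) of classes \(x\) with \(\kappa_n(x)=[s-s_O]_n\) is nonempty for each \(n\). These sets are compatible under \(n\mid m\), and they form an inverse system of nonempty finite sets, since \(A(K)/n\) is finite for a \(p\)-adic field by Mattuck. The limit is therefore nonempty and yields a point \(\hat x\in\widehat{A(K)}\). The class \([s-s_O]\in\HH^1(K,\pi_1(A_{\widebar K}))=\HH^1(K,TA)=\varprojlim\HH^1(K,A[n])\) is then the image of \(\hat x\) under the profinite Kummer map. For \(p\)-adic \(K\), \(A(K)\) is a compact \(p\)-adic Lie group containing a finite-index subgroup \(\simeq\mathcal{O}_K^g\), which is already profinite-complete. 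Hence \(A(K)\to\widehat{A(K)}\) is an isomorphism and \(\hat x\) is an actual point \(x\in A(K)\). For \(\R\), \(A(\R)\) is compact as well, and the same conclusion holds since only its finite \(\pi_0\) contributes. Finally, sections of \(\pi_1(A)\) are classified up to conjugacy by \(\HH^1(K,TA)\) relative to \(s_O\), and the section of the point \(x\) corresponds to \(\kappa(x)\). So \(s\) is conjugate to \(s_x\), i.e.\ geometric.

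The main obstacle I expect is the middle step: justifying cleanly that the exact annihilator of \(\delta(A^t(K))\) under the Weil-pairing duality is precisely \(\kappa(A(K)/n)\). This needs the compatibility, up to sign, of Tate's pairing \(A(K)\times\HH^1(K,A^t)\to\Br(K)\) with the cup-product pairing on \(n\)-torsion. The real case needs extra care with Tate-modified cohomology. Everything else is formal from \cref{prop:coin}.
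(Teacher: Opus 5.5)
Your proposal is correct and takes essentially the same route as the paper. You reduce orthogonality to the vanishing of \(f_{s,n}\) via \cref{prop:coin}, use local Tate duality to see that \([s-s_O]\) lies in the Kummer image of \(A(K)/n\) (equivalently, dies in \(\HH^1(K,A)[n]\)), and pass to the limit using \(\widehat{A(K)}=A(K)\) (resp.\ \(\pi_0(A(\R))\)). The only cosmetic difference is that the paper packages the limit step as the exact Kummer sequence \(0\to\widehat{A(K)}\to\HH^1(K,\T A)\to\T\HH^1(K,A)\to 0\), whereas you argue with compatible classes in \(A(K)/n\); these classes are in fact unique because the Kummer map is injective, so no compactness argument is needed.
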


\begin{proof}[Proof of \cref{thm:comp}]
    The geometricity of \(s\) clearly implies its orthogonality to \(\Pic^0 \). We prove the converse. The local Tate duality for abelian varieties asserts that the Weil pairing
    \[ \HH^1(K, A[n])\times \HH^1(K, A^t[n])\to \HH^2(K, \mu_n) \]
    induces a perfect pairing
    \[ \HH^1(K, A)[n]\times A^t(K)/nA^t(K)\to  \HH^2(K, \mu_n)\simeq \Z/n. \]
    If \(K=\R\), then we use \(\pi_0(A(\R))\) instead and \(\HH^2(\R, \mu_n)=\Z/\text{gcd}(2,n)\). Both statements follow from the arithmetic duality theorems for local fields; see \cite{Tat63, Lic69} for the original references and \cite[Section I.3]{Mil06} for a summary.

    By \cref{prop:coin}, the vanishing of \(g_{s,n}\) for all \(n\) implies the vanishing of \(f_{s,n}\) for all \(n\). This implies that the image of \([s-s_O]\in \HH^1(K, A[n])\) in \(\HH^1(K, A)[n]\) is zero. Taking inverse limits, we see that the map
    \[ \HH^1(K, \T A)\to \T\HH^1(K, A) \]
    sends \([s-s_O]\) to zero. By the Kummer exact sequence
    \[ 0\to \widehat{A(K)}= A(K)\to \HH^1(K,\T A)\to \T\HH^1(K, A)\to 0, \]
    where for \(K=\R\) we use \(\pi_0(A(\R))\) instead of \(A(K)\), it follows that \([s-s_O]\) comes from \(A(K)\). The equality \(\widehat{A(K)}= A(K)\) (resp. \(\widehat{A(K)}= \pi_0(A(K))\)) over \(p\)-adic fields (resp. \(\R\)) holds by the group structure of \(A(K)\) (cf. \cite{Mat55}). This shows that \(s\) is geometric.
\end{proof}

Next, we state the following corollary of \cref{thm:comp}. 

\begin{cor}\label{cor:torsor}
    Let \(X\) be a torsor under an abelian variety over a \(p\)-adic field \(K\). Then a section \(s:G_K\to \pi_1(X)\) is orthogonal to \(\Pic^0 \) if and only if it is geometric.
\end{cor}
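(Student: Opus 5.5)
The plan is to reduce to \cref{thm:comp} by Galois descent.

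Let \(X\) be a torsor under the abelian variety \(A\) over \(K\), and let \(s\) be a section orthogonal to \(\Pic^0\). The implication ``geometric \(\Rightarrow\) orthogonal'' is immediate, since every line bundle on a point is trivial. For the converse, choose a finite Galois extension \(F/K\) with \(X(F)\neq\emptyset\). Then \(X_F\simeq A_F\), via a choice of \(F\)-point as origin.

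By \cref{prop:uniform ortho}, \(s|_{G_F}\) is again orthogonal to \(\Pic^0\). Under the identification \(X_F\simeq A_F\), \(\Pic^0(X_F)=\Pic^0(A_F)\) and the Chern class obstructions correspond. Hence \cref{thm:comp} applied to \(A_F\) shows that \(s|_{G_F}\) is geometric, coming from a point \(x\in X(F)\). Since \(\pi_1(A_{\bar K})\) is abelian and torsion-free, the section \(s|_{G_F}\) determines \(x\) uniquely. Indeed, \(A(F)\to \HH^1(F,\T A)\) is injective: it is the Kummer map, \(A(F)\) is profinitely complete, and \(\widehat{A(F)}=A(F)\) has no nontrivial divisible part.

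The remaining task is to show that \(x\) is \(G_K\)-invariant, and this is the step I expect to need care. For \(\sigma\in G_K\) with lift \(\tilde\sigma=s(\sigma)\in\pi_1(X)\), conjugation by \(\tilde\sigma\) carries the section \(s|_{G_F}\), which is attached to \(x\), to the section attached to \(\sigma(x)\), by naturality of the section attached to a rational point. On the other hand, conjugating \(s|_{G_F}\) by \(s(\sigma)\) gives \(s|_{G_F}\) itself, because \(s\) is a homomorphism and \(G_F\) is normal in \(G_K\). By the uniqueness above, \(\sigma(x)=x\), so \(x\in X(K)\).

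Finally, the section \(s_x\) attached to \(x\) and the section \(s\) agree on \(G_F\). Their difference is a class in \(\HH^1(K,\T A)\) whose restriction to \(G_F\) vanishes, so it lies in \(\HH^1(\mathrm{Gal}(F/K),(\T A)^{G_F})\). That group is killed by \([F:K]\), and \((\T A)^{G_F}=\T(A(F)_{\mathrm{tors}})\)-type limits of torsion vanish since \(A(F)_{\mathrm{tors}}\) is finite. Hence \((\T A)^{G_F}=0\), the class is zero, and \(s\) is conjugate to \(s_x\), i.e. geometric.

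Alternatively, one could bypass all of this by applying \cref{thm:Chern vs Weil} directly to the torsor. The obstruction then shows that the image of \([s-s_x]\) in \(\HH^1(K,A)\) vanishes, using the compatible identification \(\Pic^0(X)=\Pic^0(A)\) from \eqref{eqn:Pics}. I would keep that route as a fallback if the descent bookkeeping becomes awkward.
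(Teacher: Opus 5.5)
Your proposal is correct and follows the paper's proof: pass to uniform orthogonality via \cref{prop:uniform ortho}, apply \cref{thm:comp} over a finite Galois extension $F$ with $X(F)\neq\emptyset$, and then descend. The only difference is at the descent step: where the paper cites the sheaf property \cref{prop:sheaf}, you unpack it by hand (Galois equivariance of the section map plus injectivity of the Kummer map to get $x\in X(K)$, then inflation--restriction with $(\T A)^{G_F}=0$), which is the same input specialized to abelian $\pi_1$. One caveat: your suggested fallback via \cref{thm:Chern vs Weil} would not bypass the descent, because it presupposes a point $x\in X(K)$, and when $X(K)=\emptyset$ you only know orthogonality to $\Pic^0(X)$, which can be a proper subgroup of $A^t(K)$.
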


The proof of \cref{cor:torsor} is obtained from \cref{thm:comp} by a Galois descent argument. The following lemma, which is a straightforward generalization of \cite[Proposition 4]{Sti11}, takes care of this Galois descent. For any variety \(X/K\), we define a presheaf \(\mathscr{S}_{X}\) on \(\text{Spec }K_{\acute et}\) by
\[  \mathscr{S}_{X}(F):=\{\text{sections of }\pi_1(X_F)\to G_F\}/\{\text{conjugacy}\}. \]

\begin{prop}\label{prop:sheaf}
    Let \(K\) be a \(p\)-adic field or a finitely generated field, and let \(X\) be a smooth proper variety of positive dimension such that the first étale homology of any finite étale cover of \(X_{\widebar{K}}\) is torsion-free. Then \(\mathscr{S}_{X}\) is an etale sheaf.
\end{prop}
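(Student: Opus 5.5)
We follow the strategy of \cite[Proposition 4]{Sti11}. Since $\mathscr{S}_X$ is a presheaf on finite separable extensions, being an étale sheaf amounts to Galois descent. Let $F'/F$ be a finite Galois extension with group $H=\operatorname{Gal}(F'/F)$. Then we must show that
\[\mathscr{S}_X(F)\to \mathscr{S}_X(F')^{H}\]
is a bijection. Here $H$ acts on $\mathscr{S}_X(F')$ through conjugation by elements of $\pi_1(X_F)$ lying over lifts of $h\in H$ to $G_F$. We may replace $K$ by $F$, and so assume $F=K$.

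\textbf{Injectivity.} Let $s_1,s_2:G_K\to\pi_1(X)$ be sections whose restrictions to $G_{F'}$ are conjugate, say by an element $\gamma\in\pi_1(X_{\bar K})$. After replacing $s_2$ by its conjugate, we may assume $s_1|_{G_{F'}}=s_2|_{G_{F'}}$. For $\sigma\in G_K$ set $a_\sigma=s_2(\sigma)s_1(\sigma)^{-1}\in\pi_1(X_{\bar K})$. Then $a$ is trivial on $G_{F'}$, and it commutes with $s_1(G_{F'})$ up to the twisted relation coming from normality of $G_{F'}$.

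The key input is that the centralizer of the open subgroup $s_1(G_{F'})$ in $\pi_1(X_{\bar K})$, and in every open subgroup of it, is trivial. We would derive this from the hypotheses. Positive dimension together with the torsion-free $H_1$ assumption on all finite étale covers gives that $\pi_1(X_{\bar K})$ has no nontrivial torsion in its abelianizations. Combined with weights, this gives the required triviality. Concretely, over a $p$-adic or finitely generated field, Galois acts on $H_1(Y_{\bar K},\mathbb{Z}_\ell)$ for a cover $Y$ with weight $-1$, and with $\mathbb{Z}_\ell$-torsion-free coefficients this leaves no nonzero invariants under any open subgroup. Hence an element centralizing an open subgroup of the section must die in every $H_1$ of every cover, so it is trivial. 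From this centralizer triviality, $a_\sigma$ is forced to be trivial, because $a_\sigma$ conjugates $s_1(G_{F'})$ into itself compatibly. Therefore $s_1=s_2$ up to the original conjugation.

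\textbf{Surjectivity.} Take an $H$-invariant class $[t]$ with $t:G_{F'}\to\pi_1(X_{F'})$. Consider the preimage $E$ of $G_K$ inside the normalizer of $t(G_{F'})$ in $\pi_1(X)$. The triviality of centralizers from the previous step shows that $E\cap\pi_1(X_{\bar K})=1$, so $E\to G_K$ is injective. $H$-invariance of $[t]$ gives surjectivity of $E\to G_K$, because for each $\sigma$ there is a conjugating element normalizing $t(G_{F'})$. Thus $E$ is the image of a section $G_K\to\pi_1(X)$ extending $t$.

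\textbf{Main obstacle.} The main obstacle is the centralizer step, namely showing that no nontrivial element of $\pi_1(X_{\bar K})$ commutes with an open subgroup of a section. The hypothesis on torsion-free $H_1$ of covers is exactly what replaces hyperbolicity in Stix's curve argument. The weight argument must be carried out uniformly over all characteristic $\ell\neq\operatorname{char}K$ and over the whole tower of covers, and this is where I expect the care to be needed. For the weight input I would first try local or global purity (Weil conjectures via specialization for finitely generated fields, and monodromy-weight type arguments over $p$-adic fields, or simply the finiteness of $A(F)_{\mathrm{tors}}$ for Albanese varieties of covers).
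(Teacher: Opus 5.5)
Your architecture matches the paper's. Centralizers of open subgroups of a section in \(\pi_1(X_{\overline K})\) are trivial. This forces the normalizer of \(s(G_{F'})\) to meet \(\pi_1(X_{\overline K})\) trivially, and descent then follows by taking that normalizer. Your injectivity and surjectivity arguments are correct once the centralizer statement is available. Note that an element of \(\pi_1(X_{\overline K})\) normalizing \(t(G_{F'})\) automatically centralizes it, since it must send \(t(\tau)\) to the unique element of \(t(G_{F'})\) lying over \(\tau\).

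The gap is in the centralizer step, which is exactly where the paper does its work, and the route you put first does not close it. You propose to detect a fixed element \(g\neq 1\) in \(H_1(Y_{\overline K},\mathbb{Z}_\ell)\) via weights, for \(\ell\neq\operatorname{char}K\). In characteristic \(p\) this cannot work. In general \(\pi_1(Y_{\overline K})^{\mathrm{ab}}\) has a nonzero \(p\)-part; for example \(T_pE\) for an ordinary elliptic curve \(E\), which satisfies the hypotheses. If \(g\) topologically generates a pro-\(p\) subgroup, its image in every \(U^{\mathrm{ab}}\) lies in the \(p\)-primary part. So \(g\) dies in every prime-to-\(p\) homology group, whichever cover you choose. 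Over a \(p\)-adic field, the case \(\ell=p\) is also not covered by a naive weight argument, since the Hodge--Tate weights of \(V_pA\) include \(0\).

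Moreover, the step ``dies in every \(H_1\) of every cover, so it is trivial'' needs an argument. The paper takes an open normal \(V\) with \(g\notin V\) and sets \(U=\langle V,g\rangle\). Then \(U/V\) is cyclic, so \(\overline{[U,U]}\le V\) and \(g\) has nonzero image in \(U^{\mathrm{ab}}\). One then shrinks \(F\) so that \(s(G_F)\) normalizes \(U\). This makes \(s(G_F)\ltimes U\) the fundamental group of an \(F\)-model of the corresponding cover, so that the \(G_F\)-action on \(U^{\mathrm{ab}}\) is the Galois action on its \(H_1\).

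The fix is the alternative you mention last, and it is what the paper does. By the torsion-freeness hypothesis and \cite[Proposition 4.3]{MT03}, \(U^{\mathrm{ab}}\) is the full, all-\(\ell\), Tate module of an abelian variety \(A\) over \(F\), compatibly with the Galois action. A nonzero \(G_F\)-invariant element would therefore contradict the finiteness of \(A(F)_{\mathrm{tors}}\). Over \(p\)-adic fields this finiteness is \cite[Lemma 3]{Sti11}, resting on \cite{Mat55}; over finitely generated fields it is Mordell--Weil (Lang--N\'eron). This argument treats every \(\ell\), including \(\operatorname{char}K\), uniformly and needs no purity input. With it, and the cyclic-quotient construction of \(U\) above, your proof goes through.
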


\begin{proof}
    First, we prove that for any section \(s:G_K \to \pi_1(X)\), the induced action of \(G_K\) on \(\pi_1(X_{\bar{K}})\) has no nontrivial fixed point. Suppose that \(g\) is a nontrivial fixed point. Choose an open subgroup \(U\) such that \(g \in U\) but \(g \notin [U,U]\). This can be constructed as follows: choose an open normal subgroup \(V\) that does not contain \(g\), and let \(U\) be the subgroup generated by \(V\) and \(g\). Then \(U/V\) is cyclic, so \([U,U] \le V\). 
    
    Now choose a finite extension \(F/K\) such that \(U\) is invariant under the conjugation action of \(s(G_F)\).  Then
    \[ 1 \to U \to G_F \ltimes U \to G_F \to 1 \]
    is the fundamental exact sequence corresponding to the \(F\)-model of the cover associated with \(U\). By assumption, \(G_F\) still fixes \(g\). Since \(g \notin [U,U]\), the abelianization \(U^\text{ab}\) contains a nonzero \(G_F\)-invariant element. But by the assumption and \cite[Proposition 4.3]{MT03}, \(U^\text{ab}\) is the Tate module of some abelian variety. This contradicts \cite[Lemma 3]{Sti11}. Note that \cite[Lemma 3]{Sti11} also holds over finitely generated fields, by the Mordell-Weil theorem.
    
    Now we prove that, for any finite extension \(F/K\), the normalizer of \(s(G_F)\) is contained in \(s(G_K)\). We first prove that \(N(s(G_F)) \cap \pi_1(X_{\bar{K}}) = \{1\}\). Suppose not, and let \(g\) be an element of this intersection. Consider the action of \(g\) on \(s(G_F)\). Since \(g\) maps to \(1\) under the projection map \(\pi_1(X)\to G_K\) and \(s(G_F)\to G_K\) is injective, this action is trivial. Hence \(g\) lies in the centralizer of \(s(G_F)\), which contradicts the previous paragraph. Let \(N\) be the normalizer of \(G_F\) in \(G_K\). Then \(s(N) \le N(s(G_F))\). Since \(\pi(N(s(G_F))) \le N\), this completes the proof.

    Finally, we prove the statement of the proposition. This amounts to showing that 
    \[ \mathscr{S}_{X}(K) \to \mathscr{S}_{X}(F)^{\text{Gal}(F/K)} \]
    is a bijection for any Galois extension \(F/K\). If a section \(s:G_F \to \pi_1(X)\) is \(\text{Gal}(F/K)\)-invariant, then by definition \(N(s(G_F))\) surjects onto \(G_K\). Hence \(N(s(G_F))\) determines a section in \(\mathscr{S}_{X}(K)\). This proves surjectivity. Injectivity follows from the same argument.
\end{proof}

\begin{proof}[Proof of \cref{cor:torsor}]
    Since \(X\) is a torsor over an abelian variety, \(a:X\simeq \Alb_X^1\) and \(s=s^a\). Therefore, we identify them in this proof. The implication from geometricity to orthogonality is immediate. Now suppose that \(\Alb_X^1(K)\neq\emptyset\), so that \(\Alb_X^1\) is an abelian variety, and that \(s\) is orthogonal to \(\Pic^0\). Then \(s^a\) is geometric by \cref{thm:comp}.
    
    Now consider the case where \(s\) is orthogonal to \(\Pic^0 \). By \cref{prop:uniform ortho}, \(s\) is also uniformly orthogonal. Choose a finite Galois extension \(F/K\) such that \(\Alb_X^1(F)\ne \emptyset\). Then by the previous argument, \(s^a|_{G_F}\) is a geometric section. By \cref{prop:sheaf}, \(\mathscr{S}_{\Alb_X^1}\) is an etale sheaf, and \(\Alb_X^1\hookrightarrow \mathscr{S}_{\Alb_X^1}\) is an injective morphism of sheaves. Since \(s^a|_{G_F}\) lies in the image of this morphism, \(s^a\) also lies in the image. Therefore, \(s^a\) is geometric. 
\end{proof}

We now explain how \cref{cor:torsor} can be applied to the study of sections of a general variety. Let \(X/K\) be a smooth proper variety, and let \(a:X\to \Alb_X^1\) be the Albanese map. If \(s:G_K\to \pi_1(X)^{(\ab)}\) is a geometrically abelian section, then by composing it with \(a_\ast\) we obtain a section
\[ s^{\text{a}}:G_K\to \pi_1(\Alb_X^1) \]
of \(\pi_1(\Alb_X^1)\to G_K\).  We note the following observation: The Albanese map transfers orthogonality to \(\Pic^0\) from \(X\) to its Albanese torsor, where \cref{cor:torsor} applies.

\begin{prop}\label{prop:good section relation}
    \(s\) is orthogonal to \(\Pic^0 \) (resp. uniformly orthogonal to \(\Pic^0 \))  if and only if \(s^\text{a}\) is orthogonal to \(\Pic^0 \) (resp. uniformly orthogonal to \(\Pic^0 \)).
\end{prop}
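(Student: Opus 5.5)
The plan is to reduce the statement to naturality of Chern classes under the Albanese map $a:X\to\Alb_X^1$, combined with the bijection $a^\ast=a_2^\ast:\Pic^0(\Alb_X^1)\xrightarrow{\sim}\Pic^0(X)$ of \cref{thm:albanese}~(2). Fix $n$ prime to $\operatorname{char}K$ and $L'\in\Pic^0(\Alb_X^1)$, and put $L=a^\ast L'\in\Pic^0(X)$. Both $c_{1,n}(L')$ and $c_{1,n}(L)$ lie in $F^1\HH^2_{\acute et}$. By \cref{prop:F1 comparison} they correspond to classes in $F^1\HH^2(\pi_1(\Alb_X^1),\mu_n)$ and $F^1\HH^2(\pi_1(X)^{(\ab)},\mu_n)$ respectively. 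Since $\pi_1(\Alb_X^1)$ is geometrically abelian, $a_\ast$ factors through $\pi_1(X)^{(\ab)}\to\pi_1(\Alb_X^1)$.

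The first step is to check that
\[
a_\ast^\ast\, c_{1,n}(L')=c_{1,n}(L)\quad\text{in } F^1\HH^2(\pi_1(X)^{(\ab)},\mu_n).
\]
Étale Chern classes are functorial, so $a^\ast c_{1,n}(L')=c_{1,n}(a^\ast L')$ in $\HH^2_{\acute et}(X,\mu_n)$. The morphism of spectral sequences in the proof of \cref{prop:F1 comparison} (Hochschild--Serre to Leray) is natural in $X$. Hence the identifications of $F^1$ commute with pullback along $a$, on the group side along $a_\ast$. Moreover, the third assertion of \cref{prop:F1 comparison} (for $\mu_n$, whose monodromy factors through $G_K$) makes the passage to $\pi_1^{(\ab)}$ harmless.

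This compatibility is the step I expect to require the most care. The subtle point is that \cref{prop:F1 comparison} only gives an injection on $\HH^2$ and an isomorphism on $F^1$. One therefore has to observe that the pullback along $a$ preserves the filtration $F^1$, which holds because it is a map over $\operatorname{Spec}K$. It then follows that the square formed by the two $F^1$-isomorphisms and the two pullbacks commutes, by the injectivity into étale cohomology.

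Given this, for the section $s$ we get
\[
(s^{\mathrm a})^\ast c_{1,n}(L')=s^\ast a_\ast^\ast c_{1,n}(L')=s^\ast c_{1,n}(a^\ast L').
\]
As $L'$ ranges over $\Pic^0(\Alb_X^1)$, the pullback $a^\ast L'$ ranges over all of $\Pic^0(X)$, by \cref{thm:albanese}~(2). So the Chern class obstructions of $s^{\mathrm a}$ with respect to $\Pic^0(\Alb_X^1)$ are exactly those of $s$ with respect to $\Pic^0(X)$, which gives the equivalence of orthogonality.

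For the uniform version, apply the same argument over each finite separable $F/K$ to the restriction $s|_{G_F}$. This is legitimate because the Albanese torsor commutes with separable base change (\cref{thm:albanese}~(1)), so $(s|_{G_F})^{\mathrm a}=s^{\mathrm a}|_{G_F}$ is the Albanese section of $X_F$, and \cref{thm:albanese}~(2) applies over $F$ as well.
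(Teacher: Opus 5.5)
Your proposal is correct and is essentially the paper's argument. The paper likewise combines the isomorphism $a_2^\ast:\Pic^0(\Alb_X^1)\xrightarrow{\sim}\Pic^0(X)$ of \cref{thm:albanese}~(2) with the naturality identity $s^\ast c_1(a^\ast L)=(s^{\mathrm a})^\ast c_1(L)$. You go further by justifying two steps the paper leaves implicit: that identity, via compatibility of the $F^1$-identifications of \cref{prop:F1 comparison} with pullback along $a$, and the separable base-change step for the uniform version.
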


\begin{proof}
     By \cref{thm:albanese} (2), we have an isomorphism \( a_2^\ast :\Pic^0(\Alb_X^1) \simeq \Pic^0(X)\). Moreover, for any line bundle \(L\in \Pic^0(\Alb_X^1)\),
    \[ s^\ast c_1(a^\ast L)=(s^\text{a})^\ast c_1(L), \]
    so \(s\) is (uniformly) orthogonal to \(\Pic^0 \) if and only if \(s^\text{a}\) is (uniformly) orthogonal to \(\Pic^0 \).
\end{proof}

All in all, we obtain the following result from \cref{cor:torsor}, which implies \cref{thm:main 2}. 

\begin{cor}\label{cor:comp}
    Let \(K\) be a \(p\)-adic field, and let \(X\) be a smooth proper variety. Then a section \(s:G_K\to \pi_1(X)^{(\text{ab})}\) is (uniformly) orthogonal to \(\Pic^0 \) if and only if \(s^{\text{a}}\) is geometric. 
\end{cor}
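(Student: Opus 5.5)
The plan is to chain together three results already established above. The first step is \cref{prop:good section relation}. It says that \(s\) is (uniformly) orthogonal to \(\Pic^0\) if and only if the induced section \(s^{\text{a}}:G_K\to\pi_1(\Alb_X^1)\) is (uniformly) orthogonal to \(\Pic^0\). This rests on two facts:
\begin{enumerate}
    \item the isomorphism \(a^\ast:\Pic^0(\Alb_X^1)\xrightarrow{\sim}\Pic^0(X)\) of \cref{thm:albanese} (2);
    \item the functoriality \(s^\ast c_1(a^\ast L)=(s^{\text{a}})^\ast c_1(L)\) of the Chern class obstruction.
\end{enumerate}
One point to check here is that \(s^{\text{a}}\) really is a section of the full \(\pi_1(\Alb_X^1)\), not merely of some quotient. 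Since \(\pi_1(\Alb^1_{X,\widebar K})\) is abelian, \(a_\ast\) factors through \(\pi_1(X)^{(\ab)}\), so \(s^{\text{a}}\) is well defined.

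For the second step, note that \(K\) is \(p\)-adic. By \cref{prop:uniform ortho}, orthogonality and uniform orthogonality to \(\Pic^0\) therefore coincide, both for \(s\) on \(X\) and for \(s^{\text{a}}\) on \(\Alb_X^1\). This lets us drop the parenthetical ``uniformly'' from the statement: it suffices to show that \(s^{\text{a}}\) is orthogonal to \(\Pic^0\) if and only if \(s^{\text{a}}\) is geometric.

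The third step applies \cref{cor:torsor} to the torsor \(\Alb_X^1\), which is a torsor under the abelian variety \(\Alb_X^0\) because \(X\) is proper (\cref{defthm:albanese}). That corollary gives exactly the equivalence between orthogonality to \(\Pic^0\) and geometricity for sections of \(\pi_1(\Alb_X^1)\). The converse direction, that geometric implies orthogonal, is immediate, since Chern classes pull back to zero at rational points. Combining the three steps yields \cref{cor:comp}, and hence the equivalence of (1), (2), (3) in \cref{thm:main 2}.

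The main obstacle is already absorbed into \cref{cor:torsor}, namely the Galois descent from a finite extension \(F\) with \(\Alb_X^1(F)\neq\emptyset\). That descent uses the sheaf property \cref{prop:sheaf}, whose hypothesis (torsion-free \(\HH_1\) of finite étale covers) I should verify for abelian torsors. It holds because every connected finite étale cover of an abelian variety over \(\widebar K\) is again an abelian variety (a torsor under one), whose first homology is a Tate module and hence torsion-free. With that verified, the corollary is a direct assembly of the cited results.
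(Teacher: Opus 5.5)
Your proposal is correct and is the paper's argument: the paper gets \cref{cor:comp} by combining \cref{prop:good section relation}, which transfers (uniform) orthogonality to \(\Pic^0\) between \(s\) and \(s^{\text{a}}\), with \cref{cor:torsor} applied to the torsor \(\Alb_X^1\), using \cref{prop:uniform ortho} for the ``uniformly'' variant. Your extra check is something the paper leaves implicit, namely that finite étale covers of \(\Alb^1_{X,\widebar{K}}\) have torsion-free first homology, so that \cref{prop:sheaf} applies inside \cref{cor:torsor}.
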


\begin{proof}[Proof of \cref{thm:main 2}]
The equivalence of (1) and (2) follows from Proposition 4.2, while the equivalence with (3) follows from \cref{cor:comp}.
\end{proof}

\begin{rmk}
    One may ask what can be said in the case of number fields. In this setting, sections orthogonal to \(\Pic^0 \) are, in general, much weaker than geometric sections. For simplicity, let us focus on the case of an abelian variety \(A\) over a number field \(K\). Let \(v\) be a place of \(K\), and for any section \(s:G_K\to \pi_1(A)\), let \(s_v\) be the composition of \(s\) with an embedding \(G_{K_v}\hookrightarrow G_K\). Then
    \[  s \text{ geometric} \Rightarrow s_v\text{ geometric} \Leftrightarrow s_v\text{ orthogonal to \(\Pic^0 \)} \Rightarrow s\text{ orthogonal to \(\Pic^0 \)}. \]
    The first implication is trivial, the equivalence in the middle is \cref{thm:comp}, and the last implication follows from the Albert--Brauer--Hasse--Noether theorem. However, the converse directions of the first and third implications are far from true. If the rank of \(A\) is at least \(1\), then any element in the image of
    \[ \widehat{A(K)}\setminus A(K)\to \HH^1(K, \T A) \]
    gives a section \(s\) that is not geometric, but for which every \(s_v\) is geometric. As for the converse of the third implication, the obstruction is that \(A_{K_v}\) generally admits many more line bundles than \(A\). In particular, if \(A^t(K)=0\), then the condition of being orthogonal to \(\Pic^0 \) is vacuous.
    
    However, if we also consider \(\Pic\), and not only \(\Pic^0\), then even over general fields the vanishing of Chern class obstructions has several implications. We discuss this in \cref{sec:brauer}.
\end{rmk}

\section{Relative Brauer group}\label{sec:brauer}

In this section, we study the relation between Chern class obstructions and relative Brauer groups. The results of this section have the advantage of holding over more general fields and will be used in \cref{sec:proof}. Recall that, for a variety \(X\), the \textit{relative Brauer group} is defined by
\[\Br(X/K):=\ker\left(\Br(K)\to\Br(X)\right).\]

\begin{thm}\label{thm:relative brauer}
    Let \(X/K\) be a smooth proper variety. If there exists a section \(s:G_K\to \pi_1(X)^{(ab)}\) orthogonal to \( \Pic \), then the relative Brauer group satisfies
    \[ \Br(X/K)\text{ is }\begin{cases}
            \text{trivial} & \text{if } \operatorname{char} K=0,\\
            l\text{-primary} & \text{if } \operatorname{char} K=l.
        \end{cases} \]
    Moreover, if \(X\) is a curve, then
    \[ \text{Index}(X)/\text{Period}(X)=\begin{cases}
            1 & \text{if } \operatorname{char} K=0,\\
            \text{a power of }l & \text{if } \operatorname{char} K=l.
        \end{cases} \]
\end{thm}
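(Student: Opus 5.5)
The plan is to identify \(\Br(X/K)\) with the failure of Galois-invariant line bundle classes to descend, and then to show that each \(n\)-torsion piece of it, for \(n\) prime to the characteristic, is detected by a Chern class obstruction of \(s\).

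\emph{Setup.} The low-degree exact sequence of the Hochschild--Serre spectral sequence for \(\mathbb{G}_m\) (using \(\HH^0(X_{\widebar K},\mathbb{G}_m)=\widebar K^\times\), since \(X\) is proper and geometrically integral) gives
\[0\to\Pic(X)\to\Pic(X_{\widebar K})^{G_K}\xrightarrow{\ \partial\ }\Br(K)\to\Br(X).\]
Hence \(\Br(X/K)=\operatorname{coker}(\Pic(X)\to\Pic(X_{\widebar K})^{G_K})\). This group is torsion: after a finite separable extension \(F/K\) with \(X(F)\neq\emptyset\), every invariant class descends, and a restriction--corestriction argument shows that \(\Br(X/K)\) is killed by \([F:K]\). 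So it suffices to prove \(\Br(X/K)[n]=0\) for every \(n\) prime to \(\operatorname{char}K\).

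\emph{Main step.} Fix \(\alpha\in\Br(X/K)[n]\) and write \(\alpha=\partial(M)\) with \(M\in\Pic(X_{\widebar K})^{G_K}\). Since \(n\alpha=0\), the class \(M^{\otimes n}\) descends to some \(L\in\Pic(X)\). Then \(L_{\widebar K}\simeq M^{\otimes n}\), so \(c_{1,n}(L)\in F^1\HH^2_{\acute et}(X,\mu_n)\). By \cref{prop:chern class}, \(\eta(c_1(L))\) is represented by \(\sigma\mapsto M^\sigma\otimes M^{-1}\), which is trivial because \(M\) is \(G_K\)-invariant. Therefore \(c_1(L)\) lies in the image of \(\HH^2(K,\mu_n)\hookrightarrow F^1\HH^2\); write it as the pullback of a class \(\beta\in\HH^2(K,\mu_n)\). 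Via \cref{prop:F1 comparison}, the same holds in \(F^1\HH^2(\pi_1(X)^{(\ab)},\mu_n)\). Since \(s\) is a section, \(s^\ast\) is a left inverse of inflation, so the Chern class obstruction is
\[s^\ast c_1(L)=\beta.\]
Orthogonality to \(\Pic\) then gives \(\beta=0\).

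\emph{Identifying \(\beta\) with \(\alpha\) (main obstacle).} It remains to show that \(\beta\) maps to \(\pm\alpha\) under the map \(\HH^2(K,\mu_n)\to\Br(K)[n]\), which is an isomorphism by Hilbert 90. I expect this to be the delicate point. I would argue with the root gerbe \(\sqrt[n]{L}\), as in the proof of \cref{prop:chern class}. Because \(M\) is Galois invariant, the torsor of maximal subgerbes is trivial, and the subgerbe \(\mathscr{G}_0\) of pairs \((M',\phi)\) with \(M'_{\widebar K}=M\) descends to a \(\mu_n\)-gerbe over \(\operatorname{Spec}K\); its class is \(\beta\). Pushing \(\mathscr{G}_0\) forward along \(\mu_n\to\mathbb{G}_m\) forgets \(\phi\) and yields the \(\mathbb{G}_m\)-gerbe of descents of \(M\). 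The class of that gerbe is the descent obstruction \(\partial(M)=\alpha\), up to the sign convention of \(\partial\).

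I also need to check that \(\beta\) does not depend on the choice of \(L\). Changing \(L\) by an element \(N^{\otimes n}\) with \(N\in\Pic(X)\), or by \(\mathcal{O}\), changes \(c_{1,n}(L)\) by \(c_{1,n}(N^{\otimes n})=0\). Different lifts of \(M^{\otimes n}\) differ by classes in \(\ker(\Pic(X)\to\Pic(X_{\widebar K}))=0\). Given the identification, \(\beta=0\) forces \(\alpha=0\), which proves the first assertion.

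\emph{Curves.} Let \(X\) be a curve, and let \(P\) and \(I\) be the positive generators of \(\deg(\Pic(X_{\widebar K})^{G_K})\) and \(\deg(\Pic(X))\). For curves the period equals \(P\), a standard fact that I would quote. Choose \(D\in\Pic(X_{\widebar K})^{G_K}\) with \(\deg D=P\). Then \(kD\) modulo \(\Pic(X)+\Pic^0(X_{\widebar K})^{G_K}\) defines an injection
\[\Z\big/(I/P)\hookrightarrow\Pic(X_{\widebar K})^{G_K}\big/\bigl(\Pic(X)+\Pic^0(X_{\widebar K})^{G_K}\bigr),\]
and the right-hand side is a quotient of \(\Br(X/K)\). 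Hence \(I/P\) divides the order of an element of \(\Br(X/K)\), which by the first part is \(1\) in characteristic \(0\) and a power of \(l\) in characteristic \(l\).
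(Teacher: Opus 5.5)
Your proposal is correct. It shares the paper's core mechanism ($s^\ast\circ\mathrm{inf}=\mathrm{id}$ together with orthogonality kills any Chern class coming from $\HH^2(K,\mu_n)$), but it reaches that Chern class from the opposite direction.

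The paper never constructs $L$ from $M$. It regards $\alpha\in\Br(X/K)[n]$ directly as a class $\beta\in\HH^2(K,\mu_n)\simeq\Br(K)[n]$ and notes that its image in $\HH^2_{\acute et}(X,\mu_n)$ dies in $\Br(X)$. The Kummer sequence $0\to\Pic(X)\otimes\Z/n\to\HH^2_{\acute et}(X,\mu_n)\to\Br(X)[n]\to0$ then shows this image is $c_{1,n}(L)$ for some $L\in\Pic(X)$, automatically with $c_{1,n}(L_{\overline K})=0$. Hence $\beta=s^\ast c_1(L)=0$. In that route the compatibility of $\beta$ with $\alpha$ is built into the Kummer sequences, so the step you flag as the main obstacle never arises, and \cref{prop:chern class} is not needed.

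Your route instead produces the explicit bundle $L$ as the descent of $M^{\otimes n}$ and uses \cref{prop:chern class} to place $c_{1,n}(L)$ in $F^2$. It must then identify $\beta$ with $\pm\partial(M)$, and your gerbe sketch for this works:
\begin{itemize}
\item The counit gives a band-compatible morphism $f^\ast\mathscr{G}_0\to\sqrt[n]{L}$, so $f^\ast[\mathscr{G}_0]=c_{1,n}(L)$.
\item $\HH^2(K,\mu_n)\to\HH^2_{\acute et}(X,\mu_n)$ is injective, by $s^\ast\circ\mathrm{inf}=\mathrm{id}$ and \cref{prop:F1 comparison}. This forces $\beta=[\mathscr{G}_0]$.
\item $(N,\phi)\mapsto N$ is a morphism over $\mu_n\hookrightarrow\mathbb{G}_m$ to the $\mathbb{G}_m$-gerbe of models of $M$. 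The class of that gerbe is the Brauer obstruction of $M$.
\end{itemize}
The extra work buys a sharper, section-independent identity $s^\ast c_{1,n}(L)=\pm\partial(M)$, relating the Chern class obstruction of the descended $n$th power to the Brauer obstruction of $M$.

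For curves, you derive the statement directly from $\Pic(X_{\overline K})^{G_K}/\bigl(\Pic(X)+\Pic^0(X_{\overline K})^{G_K}\bigr)\simeq P\Z/I\Z$ (an isomorphism, not just an injection), whereas the paper cites Stix; both are fine. Your restriction--corestriction argument for torsionness is harmless but unnecessary, since $\Br(K)$ is already torsion.
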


\begin{proof}
    The idea is that the only classes in \(\HH^2(X,\mu_n)\) that disappear in \(\Br(X)\) are Chern classes of line bundles, while orthogonality prevents a nonzero class coming from \(\HH^2(K,\mu_n)\) from being such a Chern class. It is enough to prove that for any \(n\) prime to the characteristic of \(K\), the composite
    \[ \HH^2(K, \mu_n)\to \HH_{\acute et}^2(X, \mu_n)\to \Br(X) \]
    is injective, where the second map is induced by the inclusion \(\mu_n\hookrightarrow \mathbb{G}_m\). The long exact sequence associated to the Kummer exact sequence for \(\mathbb{G}_m\) gives a short exact sequence
    \[ 0\to \Pic(X)\otimes \Z/n\xrightarrow{c_1} \HH_{\acute et}^2(X, \mu_n)\to \Br(X)[n]\to 0. \]
    Hence, the kernel of the map \(\HH_{\acute et}^2(X, \mu_n)\to \Br(X)\) is the subgroup generated by the first Chern classes of line bundles. Therefore, it is enough to show that the image of \(c_1\) has trivial intersection with the image of \( \HH^2(K, \mu_n) \).

    Suppose not. Then there exists a line bundle \(L\) such that \(c_1(L)\) is nonzero and lies in the image of \(\HH^2(K, \mu_n)\). By \cref{prop:F1 comparison}, the map \(\HH^2(K, \mu_n)\to \HH_{\acute et}^2(X, \mu_n)\) factors through \(\HH^2(K, \mu_n)\to \HH^2(\pi_1(X)^{\text{(ab)}}, \mu_n)\), we have \(s^\ast c_1(L)=0\) because \(s\) is orthogonal to \(\Pic\). However,
    \[ \HH^2(K, \mu_n)\to \HH^2(\pi_1(X)^{\text{(ab)}}, \mu_n)\xrightarrow{s^\ast} \HH^2(K, \mu_n) \]
    is the identity map. Therefore, this implies \(c_1(L)=0\), a contradiction. The last assertion follows from \cite[Section 2]{Sti10}.
\end{proof}

Indeed, over \(p\)-adic fields, we have the following partial converse.

\begin{prop}\label{prop: Brauer converse}
    Let \(X\) be a smooth proper variety over a \(p\)-adic field \(K\). If \(\pi_1(X)^{(\ab)}\to G_K\) admits a section and \(\Br(X/K)=0\), then it admits a section that is orthogonal to \(\Pic\).
\end{prop}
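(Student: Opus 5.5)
The plan is to start from an arbitrary section \(s:G_K\to\pi_1(X)^{(\ab)}\) and twist it by a suitable class \(c\in\HH^1(K,\pi_1(X_{\widebar K})^{\ab})\) so that every Chern class obstruction of the twisted section \(s'=c\cdot s\) vanishes. By \cref{thm:Chern vs Weil}, with \(s_O:=s\), twisting changes the obstruction attached to \(L\) and \(n\) by
\[(s'^\ast-s^\ast)c_{1,n}(L)=e(\delta L\cup c_n),\]
where \(c_n\) is the reduction of \(c\) mod \(n\). So for each \(n\) prime to \(p\) (and also for \(n\) divisible by \(p\), since \(\operatorname{char}K=0\)) it suffices to find \(c_n\in\HH^1(K,\pi_1(X_{\widebar K})^{\ab}\otimes\Z/n)\) with
\[e(\delta L\cup c_n)=-s^\ast c_{1,n}(L)\]
for every \(L\in\Pic(X)\) with \(c_{1,n}(L_{\widebar K})=0\). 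These choices must then be made compatibly in \(n\).

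\emph{Step 1: the obstruction factors through \(\delta\).} Let \(P_n\subseteq\Pic(X)\) be the subgroup of \(L\) with \(L_{\widebar K}\in n\Pic(X_{\widebar K})\). The map \(\varphi_n:L\mapsto s^\ast c_{1,n}(L)\in\Br(K)[n]\) is additive. I claim it factors through \(\delta:P_n\to\HH^1(K,\Pic(X_{\widebar K})[n])\). Suppose \(\delta L=0\). By \cref{prop:chern class}, \(\eta(c_{1,n}(L))=0\), so \(c_{1,n}(L)\) is the image of some \(b\in\HH^2(K,\mu_n)\). Since \(s^\ast\) composed with inflation is the identity, \(b=s^\ast c_{1,n}(L)\). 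On the other hand, \(c_{1,n}(L)\) dies in \(\Br(X)\) by the Kummer sequence, so \(b\in\Br(X/K)=0\). Hence \(\varphi_n(L)=0\), and \(\varphi_n\) induces a homomorphism \(\bar\varphi_n:\delta(P_n)\to\Br(K)[n]\simeq\Z/n\).

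\emph{Step 2: solving locally via Tate duality.} Since \(\Pic(X_{\widebar K})[n]=\Hom(\pi_1(X_{\widebar K})^{\ab}\otimes\Z/n,\mu_n)\), local Tate duality makes \(e\) a perfect pairing between \(\HH^1(K,\pi_1(X_{\widebar K})^{\ab}\otimes\Z/n)\) and \(\HH^1(K,\Pic(X_{\widebar K})[n])\). Because \(\Z/n\) is an injective \(\Z/n\)-module, \(-\bar\varphi_n\) extends to all of \(\HH^1(K,\Pic(X_{\widebar K})[n])\). Perfectness then produces some \(c_n\) realizing it. The set \(C_n\) of such \(c_n\) is therefore nonempty, and it is finite.

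\emph{Step 3: compatibility in \(n\) (main obstacle).} I need the reduction map \(C_{nm}\to C_n\) to be well defined, so that \(\varprojlim C_n\neq\emptyset\) by compactness of an inverse limit of nonempty finite sets. Take \(L\in P_n\). Then \(L^{m}\in P_{nm}\), and the inclusion \(\mu_n\hookrightarrow\mu_{nm}\) sends \(c_{1,n}(L)\) to \(c_{1,nm}(L^m)\). The induced map \(\Br(K)[n]\hookrightarrow\Br(K)[nm]\) is injective, and one has to check that \(\delta(L^m)\) paired with \(c_{nm}\) matches \(\delta L\) paired with \(c_{nm}\bmod n\) under these maps; this is the naturality of the cup product that I expect to be the fiddly point. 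Granting it, a compatible system \((c_n)\) gives a class in \(\varprojlim_n\HH^1(K,\pi_1(X_{\widebar K})^{\ab}/n)=\HH^1(K,\pi_1(X_{\widebar K})^{\ab})\). Here the \(\varprojlim^1\)-term vanishes because the groups \(\HH^0\) are finite, so the inverse limit of the \(\HH^1\) is the continuous \(\HH^1\) of the profinite module.

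Twisting \(s\) by this class yields a section \(s'\) of \(\pi_1(X)^{(\ab)}\to G_K\). By construction and \cref{thm:Chern vs Weil}, every Chern class obstruction of \(s'\) vanishes, so \(s'\) is orthogonal to \(\Pic\).
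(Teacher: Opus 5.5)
Your proposal is correct and is essentially the paper's own correction argument: the obstruction \(L\mapsto s^\ast c_{1,n}(L)\) factors through \(\delta L\), which is the one place \(\Br(X/K)=0\) is used. The resulting character is then extended by injectivity of the target, realized by local Tate duality, and cancelled by a twist via \cref{thm:Chern vs Weil}.

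The differences are only technical. You prove the factorization via \(\eta\) and the Kummer sequence instead of via \(\Pic(X)=\Pic(X_{\overline{K}})^{G_K}\). You also solve at each finite level and glue by compactness of \(\varprojlim_n C_n\), rather than extending a single \(\Q/\Z\)-valued character on the colimit \(F^1_{\Pic}\) and invoking the limit Pontryagin duality. The naturality you flag in Step 3 does hold: the same \(M\) computes \(\delta(L^{m})\), so \(\delta(L^{m})\) is the image of \(\delta L\) under \(\Pic(X_{\overline{K}})[n]\hookrightarrow\Pic(X_{\overline{K}})[nm]\). This is exactly the transition-map compatibility the paper relies on.
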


For \cref{prop: Brauer converse}, we need a slight variation of the local Tate duality used above. Over a \(p\)-adic field, there is a canonical perfect \(G_K\)-equivariant pairing
\[\Pic(X_{\overline{K}})[n]\times\pi_1(X_{\overline{K}})^\ab/n\to\mu_n.\]
Therefore, local Tate duality gives a perfect pairing
\[\HH^1(K,\Pic(X_{\overline{K}})[n])\times\HH^1(K,\pi_1(X_{\overline{K}})^\ab/n)\to\Br(K)[n]\simeq\Z/n.\]
Passing to the direct limit in the first factor and the inverse limit in the second factor gives a natural pairing
\[\HH^1(K,\Pic(X_{\overline{K}})_{\mathrm{tors}})\times\HH^1(K,\pi_1(X_{\overline{K}})^\ab)\to\Br(K)\simeq\Q/\Z.\]
Indeed, if we regard \(\HH^1(K,\Pic(X_{\overline{K}})_{\mathrm{tors}})\) as a discrete torsion group and endow \(\HH^1(K,\pi_1(X_{\overline{K}})^\ab)\) with the profinite topology, then these two groups are Pontryagin dual to each other. The point is that the set of conjugacy classes of sections is naturally a torsor under \(\HH^1(K,\pi_1(X_{\overline{K}})^\ab)\).

\begin{proof}
    The proof is a correction argument. Starting from an arbitrary section \(s_0\), its Chern class obstructions define a partial character on \(H^1(K,\Pic(X_{\bar K})_{\mathrm{tors}})\). We extend this character and use local Tate duality to obtain a class \(c\) whose translation cancels all Chern class obstructions.
    
    Let \(s_0:G_K\to\pi_1(X)^{(\ab)}\) be any section. Since \(\Br(X/K)=0\), we have \(\Pic(X)=\Pic(X_{\overline{K}})^{G_K}\) by \cite[Section~2]{Sti10}. Therefore,
    \[(n\cdot\Pic(X_{\overline{K}}))^{G_K}=(n\cdot\Pic(X_{\overline{K}}))\cap\Pic(X_{\overline{K}})^{G_K}=(n\cdot\Pic(X_{\overline{K}}))\cap\Pic(X),\]
    which is precisely the subgroup of line bundles \(L\in\Pic(X)\) such that \(c_{1,n}(L_{\overline{K}})=0\); cf. \cref{defn:Chern class}. From the exact sequence
    \[0\to\Pic(X_{\overline{K}})[n]\to\Pic(X_{\overline{K}})\to n\cdot\Pic(X_{\overline{K}})\to 0,\]
    we obtain an injection
    \[(n\cdot\Pic(X_{\overline{K}}))^{G_K}/n\cdot\Pic(X)\hookrightarrow\HH^1(K,\Pic(X_{\overline{K}})[n]).\]
    Moreover, these injections are compatible with the transition maps, giving a commutative diagram
    \[
    \begin{tikzcd}
    (n\cdot \Pic(X_{\overline{K}}))^{G_K}/ n\cdot \Pic(X)
       \arrow[r, hook]
       \arrow[d, "\times m"']
    &
    \HH^1\!\left(G_K,\Pic(X_{\overline K})[n]\right)
       \arrow[d]
    \\
    (nm\cdot \Pic(X_{\overline{K}}))^{G_K}/ nm\cdot \Pic(X)
       \arrow[r, hook]
    &
    \HH^1\!\left(G_K,\Pic(X_{\overline K})[nm]\right).
    \end{tikzcd}
    \]

    Let \(F^1_{\Pic}\) be the colimit of the groups on the left-hand side. Then \(F^1_{\Pic}\hookrightarrow\HH^1(K,\Pic(X_{\overline{K}})_{\mathrm{tors}})\). Moreover, \(s_0^\ast c_{1,n}(-)\) defines a homomorphism from \((n\cdot\Pic(X_{\overline{K}}))^{G_K}/n\cdot\Pic(X)\) to \(\Br(K)[n]\subseteq\Br(K)\simeq\Q/\Z\). These homomorphisms are compatible with the transition maps and hence define a homomorphism \(F^1_{\Pic}\to\Q/\Z\). Since \(\Q/\Z\) is an injective abelian group, this homomorphism extends to \(\HH^1(K,\Pic(X_{\overline{K}})_{\mathrm{tors}})\). By local Tate duality, there exists \(c\in\HH^1(G_K,\pi_1(X_{\overline{K}})^\ab)\) inducing the same homomorphism. Let \(s\) be the section corresponding to \(-c\), i.e. \(s=s_0-c\). By \cref{thm:Chern vs Weil}, the contribution of \(c\) cancels \(s_0^\ast c_{1,n}(L)\) for every \(n\) and every \(L\) under consideration. Therefore, \(s\) is orthogonal to \(\Pic\).
\end{proof}

In particular, we obtain the following corollary, which implies \cref{thm:newmain}. Since we use \cref{lem:albanese lifting}, which is stated in \cref{sec:proof}, we postpone its proof to that section.

\begin{cor}\label{cor:period index}
    \begin{enumerate}
        \item If \(G^{(\text{ab})}_{K(X)}\to G_K\) admits a section, then
        \[ \Br(X/K)\text{ is }\begin{cases}
            \text{trivial} & \text{if } \operatorname{char} K=0,\\
            l\text{-primary} & \text{if } \operatorname{char} K=l.
        \end{cases} \]
        In particular, if \(X\) is a curve, then
        \[ \text{Index}(X)/\text{Period}(X)=\begin{cases}
            1 & \text{if } \operatorname{char} K=0,\\
            \text{a power of }l & \text{if } \operatorname{char} K=l.
        \end{cases} \]
        \item Let \(K\) be a \(p\)-adic field or a finitely generated field. Then for an open subset \(U\subseteq X\) satisfying (VP1), if there exists a section \(s_U:G_K\to \pi_1(U)^{\text{(ab)}}\), then the same conclusion as in (1) holds. Moreover, if \(K\) is a \(p\)-adic field and \(X\) is a curve, then \(\text{Index}(X)=\text{Period}(X)=1\).
    \end{enumerate}
\end{cor}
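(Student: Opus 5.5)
The plan is to reduce both parts to \cref{thm:relative brauer}, which needs a section \(s:G_K\to\pi_1(X)^{(\ab)}\) orthogonal to \(\Pic\). In each case I will produce a section of \(\pi_1(X)^{(\ab)}\) by pushing forward along the open immersion \(U\hookrightarrow X\). Orthogonality to \(\Pic\) will come from the fact that on a small open subset the relevant line bundles become trivial, so their Chern class obstructions vanish by functoriality.

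For (1), let \(s_\eta:G_K\to G_{K(X)}^{(\ab)}\) be a section. Fix \(n\) prime to \(\operatorname{char}K\) and \(L\in\Pic(X)\) with \(c_{1,n}(L_{\overline K})=0\). Choose a nonempty open \(U\subseteq X\) on which \(L\) is trivial. Because \(\operatorname{Spec}K(X)\to U\to X\), the composite \(G_K\to G_{K(X)}^{(\ab)}\to\pi_1(U)^{(\ab)}\to\pi_1(X)^{(\ab)}\) is a section \(s\), and it does not depend on the choice of \(U\). By \cref{prop:F1 comparison}, \(c_{1,n}(L)\) lives in \(F^1\HH^2(\pi_1(X)^{(\ab)},\mu_n)\), and this identification is functorial for \(U\to X\). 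Its restriction to \(\pi_1(U)^{(\ab)}\) is therefore \(c_{1,n}(L|_U)=0\), using the injection \(\HH^2(\pi_1(U)^{(\ab)})\) into étale cohomology on \(F^1\). Hence \(s^\ast c_{1,n}(L)=0\), so \(s\) is orthogonal to \(\Pic\), and \cref{thm:relative brauer} gives both conclusions.

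For (2), given \(s_U\), let \(s\) be its pushforward to \(\pi_1(X)^{(\ab)}\). Here I will invoke \cref{lem:albanese lifting}, which I expect to say that (VP1) forces the Chern class obstructions of \(s\) to vanish. A direct argument, which I would use if the lemma does not literally apply, runs as follows. For \(L\) as above with \(c_{1,n}(L_{\overline K})=0\), (VP1) gives \(\widehat{\Pic(U)}=0\), so \(L|_U\in n\Pic(U)\), say \(L|_U=N^{\otimes n}\). Then \(c_{1,n}(L|_U)=c_{1,n}(N^{\otimes n})=0\) in \(\HH^2_{\acute et}(U,\mu_n)\).

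The main obstacle is that \(U\) is not proper. Two points used in the proper case need checking on \(U\): that \(c_{1,n}(L|_U)\) lies in \(F^1\), and that \(F^1\HH^2(\pi_1(U)^{(\ab)},\mu_n)\) injects into étale cohomology. The statement of \cref{prop:F1 comparison} only requires a connected scheme, so this should be fine, and since the class is literally zero we only need compatibility of pullback along \(U\to X\). Given this, \(s^\ast c_{1,n}(L)=s_U^\ast c_{1,n}(L|_U)=0\). Then \cref{thm:relative brauer} gives the Brauer statement and the index/period ratio for curves.

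For the final assertion in (2), with \(K\) \(p\)-adic and \(X\) a curve, we also want period \(1\). Orthogonality to \(\Pic\) implies orthogonality to \(\Pic^0\). By \cref{cor:comp}, \(s^{\text a}\) is then geometric, so \(\Alb_X^1(K)\neq\emptyset\) and the period is \(1\). Combined with index\(/\)period \(=1\), since \(\operatorname{char}K=0\), this yields \(\text{Index}(X)=\text{Period}(X)=1\).
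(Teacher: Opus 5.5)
Your proposal is correct. Part (1) and the final assertion of (2) follow the paper: the paper simply cites \cref{lem:albanese lifting}~(1), whose proof is exactly your computation \(s^\ast c_{1,n}(L)=s_U^\ast c_{1,n}(L|_U)=0\). It also gets period \(1\) from \cref{cor:comp}, citing Lichtenbaum where you use the ratio from \cref{thm:relative brauer}; either works.

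For the orthogonality in (2), however, your route differs. The paper works on \(X\) via \cref{lem:Picard_vaishing}. It first kills the obstructions on \(\Pic^0(X)\) using density of \(\Pic^0(X)\cap\langle D\rangle\), as in \cref{thm:genmain}. For general \(L\) it then subtracts boundary divisors \(\sum a_iE_i\) to reach some \(L'\in\Pic^0(X)\) with \(L|_U=L'|_U\); this needs \(\langle D\rangle\) to surject onto \(\Pic(X)/\Pic^0(X)\), which comes from density plus finite generation of \(\operatorname{NS}\).

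You work on \(U\) instead. From \(\widehat{\Pic(U)}=0\) you get \(\Pic(U)=n\Pic(U)\) for all \(n\), since a nonzero \(\Pic(U)/n\) would have a nonzero finite quotient. So \(c_{1,n}(L|_U)=0\) already in \(\HH^2_{\acute et}(U,\mu_n)\) by the Kummer sequence. Naturality, together with the injectivity of \(F^1\HH^2(\pi_1(U)^{(\ab)},\mu_n)\to\HH^2_{\acute et}(U,\mu_n)\) from \cref{prop:F1 comparison} (valid for any connected \(U\)), then gives \(s^\ast c_{1,n}(L)=0\).

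Both arguments rest on the same fact, since \(\Pic(X)=\langle D\rangle+n\Pic(X)\) is equivalent to \(\Pic(U)=n\Pic(U)\). Yours is a one-step argument that avoids splitting \(\Pic(X)\) into its \(\Pic^0\) and N\'eron--Severi parts. The paper's version has the merit of expressing everything through line bundles on \(X\) that become trivial on \(U\), which is the form in which \cref{lem:albanese lifting} is reused in \cref{thm:genmain} and \cref{thm:generic period}.
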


\subsection{Further consequences of an abelian section.}

We record a further consequence that does not require passing to a sufficiently small open subset. Even the existence of a section of \(\pi_1(X)^{(\mathrm{ab})}\to G_K\) already imposes restrictions on the relative Brauer group.

Recall that the Leray spectral sequence for \(X\to K\) with coefficients in \(\mathbb{G}_m\) gives
\[ 0\to \Pic(X)\to \Pic(X_{\widebar{K}})^{G_K}\xrightarrow{d_2^{0,1}}\Br(K)\to \Br(X). \]
By \cite[Appendix B]{Sti10}, \(d_2^{0,1}\) is the Brauer obstruction map. Hence, the relative Brauer group is the image of the Brauer obstruction map.

\begin{thm}\label{thm:torsion line bundle}
    Let \(X\) be a smooth proper variety over a field \(K\) such that there exists a section \(s:G_K\to \pi_1(X)^{(\text{ab})}\). Then every torsion element of \(\Pic(X_{\widebar{K}})^{G_K}\) whose order is invertible in \(K\) has trivial Brauer obstruction.
\end{thm}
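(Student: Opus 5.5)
The plan is to show that such a torsion class already lies in the image of \(\Pic(X)\to\Pic(X_{\widebar K})^{G_K}\). By the exact sequence recalled just before the statement, that image is exactly the kernel of \(d_2^{0,1}\), i.e.\ of the Brauer obstruction. The route goes through \(\mu_m\)-torsors, where \(m\) is the order of the given element \(N\in\Pic(X_{\widebar K})^{G_K}\), invertible in \(K\). For \(\mu_m\)-coefficients everything can be phrased in group cohomology of \(\pi_1(X)^{(\ab)}\), and there the section kills the relevant differential by \cref{lem:SS}.

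First, since \(X\) is proper and geometrically integral, \(\HH^0(X_{\widebar K},\mathbb{G}_m)=\widebar K^\times\) is \(m\)-divisible. The Kummer sequence on \(X_{\widebar K}\) therefore gives a \(G_K\)-equivariant isomorphism
\[\HH^1_{\acute et}(X_{\widebar K},\mu_m)\xrightarrow{\sim}\Pic(X_{\widebar K})[m].\]
Combined with \(\HH^1_{\acute et}(X_{\widebar K},\mu_m)=\Hom(\pi_1(X_{\widebar K})^{\ab},\mu_m)\), the class \(N\) corresponds to a \(G_K\)-invariant homomorphism
\[\varphi\in \HH^0\bigl(K,\HH^1(\pi_1(X_{\widebar K})^{\ab},\mu_m)\bigr).\]
Next I consider the Hochschild--Serre spectral sequence for \(1\to\pi_1(X_{\widebar K})^{\ab}\to\pi_1(X)^{(\ab)}\to G_K\to1\) with coefficients \(\mu_m\), on which the geometric part acts trivially. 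The section \(s\) makes \(\HH^p(K,\mu_m)\to\HH^p(\pi_1(X)^{(\ab)},\mu_m)\) injective, because \(s^\ast\) is a left inverse. By \cref{lem:SS} this gives \(E_2^{0,1}=E_\infty^{0,1}\), so \(\varphi\) lifts to a class
\[\tilde\varphi\in\HH^1(\pi_1(X)^{(\ab)},\mu_m).\]

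Then I transport \(\tilde\varphi\) to étale cohomology. Inflation along \(\pi_1(X)\to\pi_1(X)^{(\ab)}\) followed by the identification \(\HH^1(\pi_1(X),\mu_m)=\HH^1_{\acute et}(X,\mu_m)\) gives a \(\mu_m\)-torsor on \(X\). The latter identification holds for any locally constant sheaf, and is the \(\HH^1\) analogue of \cref{prop:F1 comparison}. The key compatibility is that restricting this torsor to \(X_{\widebar K}\) gives back \(\varphi\). This holds because the maps to the geometric \(E^{0,1}\) terms commute with the morphism of spectral sequences from Hochschild--Serre to Leray used in \cref{prop:F1 comparison}.

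Applying the Kummer map \(\HH^1_{\acute et}(X,\mu_m)\to\Pic(X)[m]\), I obtain a line bundle \(N'\in\Pic(X)\). By naturality of the Kummer sequence under base change, \(N'_{\widebar K}=N\). Hence \(N\) lies in the image of \(\Pic(X)\), so \(d_2^{0,1}(N)=0\), i.e.\ its Brauer obstruction vanishes.

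I expect the main obstacle to be this compatibility bookkeeping rather than any real difficulty. One has to check that the edge maps \(\HH^1(\pi_1(X)^{(\ab)},\mu_m)\to\HH^1(\pi_1(X_{\widebar K})^{\ab},\mu_m)^{G_K}\) and \(\HH^1_{\acute et}(X,\mu_m)\to\HH^1_{\acute et}(X_{\widebar K},\mu_m)^{G_K}\) match under the comparison maps. One also has to check that \(\mu_m\)-valued characters of \(\pi_1(X_{\widebar K})\) factor through the abelianization, so that nothing is lost by passing to the geometrically abelian quotient. I would handle both by the same diagram of sites as in the proof of \cref{prop:F1 comparison}.
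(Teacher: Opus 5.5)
Your argument is correct and rests on the same key point as the paper's proof. Because \(s^\ast\) retracts inflation, the transgression \(d_2^{0,1}\) vanishes in the Hochschild--Serre spectral sequence of \(\pi_1(X)^{(\ab)}\to G_K\) with \(\mu_m\)-coefficients. The difference is only in the transfer step. The paper carries the vanishing of \(d_2^{0,1}\) through the morphisms of five-term sequences to the Leray spectral sequence, and then compares the \(\mu_n\)- and \(\mathbb{G}_m\)-differentials. You instead carry the lifted class through \(\HH^1(\pi_1(X),\mu_m)=\HH^1_{\acute et}(X,\mu_m)\) and the Kummer map to produce an actual line bundle over \(K\). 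Your route is slightly more elementary: it needs only the \(\HH^1\) comparison and the exact sequence \(0\to\Pic(X)\to\Pic(X_{\overline K})^{G_K}\to\Br(K)\).
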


Note that this generalizes \cite[Proposition 12]{Sti10}. More precisely, we weaken the assumption from the existence of a section of the full fundamental group to that of the geometrically abelian fundamental group.

\begin{proof}
    The proof uses a technique similar to that of \cref{prop:F1 comparison}. Let \(E_{2}^{p,q}\) (resp. \(F_2^{p,q}, G_2^{p,q}\)) be the Leray spectral sequence for \(X\to K\) (resp. the Hochschild--Serre spectral sequence for \( \pi_1(X)\twoheadrightarrow G_K \), \( \pi_1(X)^{(\text{ab})}\twoheadrightarrow G_K \)) with coefficients in \(\mu_n\) for some \(n\) which is invertible in \(K\). Then there exist morphisms of spectral sequences
    \begin{equation}\label{eqn:3ss}
        G_2^{p,q}\to F_2^{p,q}\to E_2^{p,q}.
    \end{equation}
    The morphism \(G_2^{p,q}\to F_2^{p,q}\) is naturally induced from the diagram
    \[
    \begin{tikzcd}[ampersand replacement=\&]
         \pi_1(X) \arrow[d]\arrow[r, two heads] \& G_K \arrow[d, "\text{id}"]\\
        \pi_1(X)^{(\text{ab})} \arrow[r, two heads] \& G_K
    \end{tikzcd}.
    \]
    and \(F_2^{p,q}\to E_2^{p,q}\) is defined as in the proof of \cref{prop:F1 comparison}. Since
    \[ \HH_{\acute et}^1(X_{\widebar{K}}, \mu_n)\simeq \HH^1(\pi_1(X_{\widebar{K}}), \mu_n)\simeq \HH^1(\pi_1(X_{\widebar{K}})^{\text{ab}}, \mu_n)\simeq \Pic(X_{\widebar{K}})[n], \]
    \cref{eqn:3ss} induces an isomorphism for \(q\le 1\). Therefore, \cref{eqn:3ss} induces the following map between portions of the five-term exact sequences:
    \[
    \begin{tikzcd}[ampersand replacement=\&]
         \HH^0(K, \Pic(X_{\widebar{K}})[n]) \arrow[d, "\simeq"]\arrow[r, "d_{2, G}^{0,1}"] \& \Br(K)[n] \arrow[d, "\simeq"]\arrow[r] \&  \HH^2(\pi_1(X)^{(\text{ab})}, \mu_n) \arrow[d] \\
         \HH^0(K, \Pic(X_{\widebar{K}})[n]) \arrow[d, "\simeq"]\arrow[r, "d_{2, F}^{0,1}"] \& \Br(K)[n] \arrow[d, "\simeq"]\arrow[r] \& \HH^2(\pi_1(X), \mu_n) \arrow[d] \\
         \HH^0(K, \Pic(X_{\widebar{K}})[n]) \arrow[r, "d_{2, E}^{0,1}"] \& \Br(K)[n] \arrow[r]\& \HH^2(X, \mu_n).
    \end{tikzcd}
    \]

    The map \(d_{2, E}^{0,1}\) is equal to \(d_2^{0,1}|_{\HH^0(K, \Pic(X_{\widebar{K}})[n])}\), by comparing the Leray spectral sequences with coefficients in \(\mu_n\) and in \( \mathbb{G}_m \). Since the upper left map \(\Br(K)[n]\to \HH^2(\pi_1(X)^{(\text{ab})}, \mu_n) \) admits a retraction given by \(s^\ast\), we have \(d_{2, G}^{0,1}=0\). Hence, by the diagram, \(d_{2, E}^{0,1}=0\). Thus \(d_2^{0,1}\), the Brauer obstruction map, vanishes on \(\HH^0(K, \Pic(X_{\widebar{K}})[n])=\Pic(X_{\widebar{K}})^{G_K}[n]\).
\end{proof}

\begin{cor}\label{cor: torsion line bundle}
    Keep all the assumptions of \cref{thm:torsion line bundle}, and moreover assume \(X\) is a curve.
    \begin{enumerate}
        \item If \(K\) is a \(p\)-adic field, then the period of \(X\) is a power of \(p\).
        \item If \(K\) is a \(p\)-adic field, \(g(X)\ge 1\), and \(s\) lifts to a section of a geometrically two-step solvable fundamental group, i.e. an extension
        \[ 1\to \pi_1(X_{\widebar{K}})^{2\text{-sol}}\to \pi_1(X)^{(2\text{-sol})}\to G_K\to 1 \]
        where \(\pi_1(X_{\widebar{K}})^{2\text{-sol}}\) is the maximal \(2\)-step solvable quotient, then the index and the order of the relative Brauer group of \(X\) are also powers of \(p\). In particular, if \(p\) is odd, then the period and index of \(X\) coincide.
        \item If \(K=\Q\), \(g(X)\ge 1\), and \(s\) lifts to a section of a geometrically two-step solvable fundamental group, then the period and index of \(X\) coincide.
    \end{enumerate}
\end{cor}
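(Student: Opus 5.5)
The plan is to reduce each part to the vanishing statement of \cref{thm:torsion line bundle}, combined with the description of $\Br(X/K)$ as the image of the Brauer obstruction $d_2^{0,1}$ on $\Pic(X_{\widebar K})^{G_K}$ from \cite[Section 2]{Sti10}.

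\textbf{Part (1).} Let $J=\Pic^0_X$ be the Jacobian, so that $\Pic^0(X_{\widebar K})^{G_K}=J(K)$. The period is the order of $[\Pic^1_X]\in\HH^1(K,J)$. I would use Lichtenbaum's description \cite{Lic69}: the restriction of the Brauer obstruction to $J(K)$ is the local Tate pairing $\langle[\Pic^1_X],-\rangle:J(K)\to\Q/\Z$, with $J$ identified with its dual via the principal polarization. By \cref{thm:torsion line bundle}, this character kills every torsion point of $J(K)$ of order prime to $p$. Over a $p$-adic field, $J(K)\simeq\Z_p^d\oplus(\text{finite})$, so the character kills the prime-to-$p$ part of $J(K)$ and factors through a pro-$p$ quotient. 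Hence it has $p$-power order. By perfectness of Tate duality \cite{Mil06}, $[\Pic^1_X]$ therefore has $p$-power order, which is the claim.

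\textbf{Part (2).} Fix a prime $\ell\ne p$ and set $n=\ell^N$. The lift of $s$ to $\pi_1(X)^{(2\text{-sol})}$ normalizes the kernel of $\pi_1(X_{\widebar K})\to\pi_1(X_{\widebar K})^{\ab}/n$. It therefore defines a $K$-form $Y_n\to X$ of the maximal geometrically abelian $n$-cover, of degree $n^{2g}$. It also gives a section of $\pi_1(Y_n)^{(\ab)}\to G_K$, obtained by restricting the 2-step solvable section to the preimage of $\pi_1(Y_{n,\widebar K})$; here $g\ge1$ ensures that $Y_n$ is again a curve of positive genus.

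The plan is then to show that the $\ell$-part of $\Br(X/K)$, equivalently of $\mathrm{Index}(X)/\mathrm{Period}(X)$, vanishes; this is the main obstacle. Take a class $L\in\Pic(X_{\widebar K})^{G_K}$ whose obstruction has order $\ell^k$. The idea is to choose $N$ large and pull $L$ back to $Y_n$, hoping that $\pi^\ast L$ differs from an $n$-divisible class by a prime-to-$p$ torsion element. The point is that pullback along $Y_n\to X$ composed with pushforward is multiplication by $\deg=n^{2g}$. Then \cref{thm:torsion line bundle} and part (1) applied to $Y_n$ kill its obstruction. Finally, the projection formula, which makes $\pi_\ast\pi^\ast$ multiplication by $n^{2g}$ on obstructions, together with the fact that $\Br(K)[\ell^k]$ injects into $\Br(Y_n)$ once $\ell^{k}$ is coprime to what remains, should give a contradiction.

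If this direct route fails, I would instead mimic Stix's neighbourhood argument. One bounds the index of $X$ by the indices of the $Y_n$ via pushforward of $0$-cycles; the point is that $\mathrm{Index}(X)$ divides $\deg(Y_n\to X)\cdot\mathrm{Index}(Y_n)$, and one varies $\ell$. For odd $p$, Lichtenbaum's theorem gives $\mathrm{Index}/\mathrm{Period}\in\{1,2\}$, and being a power of $p$ forces the ratio to be $1$.

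\textbf{Part (3).} Restrict $s$ and its 2-step solvable lift to each decomposition group $G_{\Q_v}$. For odd $p$, part (2) gives $\Br(X_{\Q_p}/\Q_p)=0$. A class in $\Br(X/\Q)\subseteq\Br(\Q)$ therefore has local invariants supported at $v=2,\infty$. At $v=\infty$ and $v=2$, part (2) and \cref{thm:torsion line bundle} show that the invariants lie in $\tfrac12\Z/\Z$. The remaining difficulty is to exclude the configuration $(\tfrac12,\tfrac12)$. I would attack this by applying \cref{thm:torsion line bundle} over $\R$ to the $2$-torsion of $\Pic(X_{\widebar{\Q}})^{G_{\Q}}$ together with the global reciprocity law. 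Once $\Br(X/\Q)=0$, the equality of period and index follows from \cite[Section 2]{Sti10}.
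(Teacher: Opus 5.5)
Part (1) is correct. It is the argument the paper uses: Stix's Corollary 14, with \cref{thm:torsion line bundle} replacing his Proposition 12. Since $\operatorname{char}K=0$, that theorem actually kills all torsion of $J(K)$, but your prime-to-$p$ version also suffices.

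Part (2) has a genuine gap, exactly at the step you call the main obstacle.

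First, only the prime $2$ with $p$ odd is at stake. For odd $\ell\neq p$, the $\ell$-part of the index vanishes at once from Lichtenbaum's $\mathrm{Index}\mid 2\,\mathrm{Period}$ together with part (1).

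Your direct route does not reach $\ell=2$, for three reasons.
\begin{itemize}
\item Since $\mathrm{ob}_{Y_n}(\pi^\ast L)=\mathrm{ob}_X(L)$, you must show $\pi^\ast L$ is unobstructed on $Y_n$. That is the same problem one level up: part (1) on $Y_n$ only controls $\Pic^0_{Y_n}(K)$, leaving the same ambiguity $\mathrm{Index}(Y_n)/\mathrm{Period}(Y_n)\in\{1,2\}$.
\item An $n$-th root of $\pi^\ast L$ in $\Pic(Y_{n,\overline K})$ need not be $G_K$-invariant. So neither the obstruction homomorphism nor \cref{thm:torsion line bundle} can be applied to it.
\item The projection formula yields only a tautology, because the norm acts on $E_2^{2,0}=\Br(K)$ as multiplication by $\deg\pi$.
\end{itemize}

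Your fallback bound $\mathrm{Index}(X)\mid\deg(Y_n/X)\cdot\mathrm{Index}(Y_n)$ discards precisely the $2$-part when $\deg=2^{2gN}$. Pushing forward $0$-cycles preserves degree, so in fact $\mathrm{Index}(X)\mid\mathrm{Index}(Y_n)$. Even so, you have no way to bound $\mathrm{Index}(Y_n)$.

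The missing input is Lichtenbaum's parity criterion \cite{Lic69}: if $(g-1)/\mathrm{Period}$ is even, then $\mathrm{Index}=\mathrm{Period}$. The paper, following \cite[Theorem 15]{Sti10}, argues as follows.
\begin{itemize}
\item Take a geometrically abelian neighbourhood $X'\to X$ of even degree through which $s$ lifts; your $Y_n$ with $\ell=2$ would do.
\item The $2$-step solvable lift gives a section of $\pi_1(X')^{(\ab)}$. So $\mathrm{Period}(X')$ is a power of $p$ by (1), hence odd.
\item Since $g(X')-1=\deg\cdot(g-1)$ is even, the criterion gives $\mathrm{Index}(X')=\mathrm{Period}(X')$.
\item Finally, $\mathrm{Index}(X)\mid\mathrm{Index}(X')$.
\end{itemize}

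Part (3) is also incomplete.

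Part (2) only says that $\Br(X_{\Q_p}/\Q_p)$ is a $p$-group, not that it vanishes. The vanishing of the odd local invariants of $\alpha\in\Br(X/\Q)$ comes instead from reciprocity together with $\Q/\Z=\bigoplus_\ell\Q_\ell/\Z_\ell$.

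More seriously, you cannot exclude the residual case $\mathrm{inv}_2(\alpha)=\mathrm{inv}_\infty(\alpha)=\tfrac12$ using \cref{thm:torsion line bundle} over $\R$. That theorem only constrains obstructions of torsion classes. The real invariant instead detects whether $X(\R)\neq\emptyset$: for a curve, $\Br(X_\R/\R)\simeq\Z/2$ whenever $X(\R)=\emptyset$.

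The paper uses Wickelgren's $2$-nilpotent real section conjecture \cite{Wic14}. Restricting the $2$-step solvable lift to $G_\R$ forces $X(\R)\neq\emptyset$, so $\alpha_\R=0$. Then each $\mathrm{inv}_p(\alpha)$ is $p$-primary and the invariants sum to zero, so all of them vanish. Hence $\alpha=0$ by the Albert--Brauer--Hasse--Noether theorem.
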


\begin{proof}
    The proof is almost identical to that of the corresponding theorems in \cite[Sections 5--6]{Sti10}, with the necessary adjustments. In particular, the proof of (1) is exactly the same as that of \cite[Corollary 14]{Sti10}, using \cref{thm:torsion line bundle} in place of \cite[Proposition 12]{Sti10}.

    (2) We follow the proof of \cite[Theorem 15]{Sti10}. We need only construct a geometrically abelian cover \(X'\to X\) of even degree such that the corresponding subgroup \(N\subseteq \pi_1(X)^{(\text{ab})}\) contains the image of \(s\). Such a cover always exists since \(g(X)\ge 1\). The rest of the proof is then the same as that of \cite[Theorem 15]{Sti10}, using \cref{thm:torsion line bundle} on \(X'\).

    (3) It is enough to prove that the relative Brauer group of \(X\) vanishes. Let \( \alpha \in \Br(X/\Q) \). By (2), \( \alpha_{\Q_p} \) is \(p\)-torsion for every prime \(p\). Moreover, by \cite{Wic14}, \(X\) admits an \(\R\)-point, so \(\alpha_\R=0\). Therefore, \(\alpha=0\), and hence \(\Br(X/\Q)=0\).
\end{proof}

\section{Applications}\label{sec:proof}

In this section, we give several applications of \cref{sec:section} and, in particular, prove some of the main theorems of the paper. The mechanism is simple: the conditions \textup{(VP0)} and \textup{(VP1)} force the relevant Chern class obstructions to vanish (cf. \cref{lem:albanese lifting,lem:Picard_vaishing}). Analogous vanishing phenomena for curves have appeared in \cite{Moc05,Sai11,Sai12,Sai21}. Here we establish a general statement in arbitrary dimension.

\begin{lem}\label{lem:albanese lifting}
    Let \(X\) be a smooth proper variety over a field \(K\), \(U\subseteq X\) an open subset, \(D:=X\setminus U\), and \( \langle D\rangle\subseteq \Pic(X) \) the subgroup generated by divisors supported on \(D\).

    \begin{enumerate}
        \item If \(s:G_K\to \pi_1(X)^{(\text{ab})}\) lifts to a section \(s_U:G_K\to \pi_1(U)^{(\text{ab})}\), then for any line bundle \(L\in \langle D\rangle\) and \(n\in \N\) prime to char \(K\) such that \(c_{1,n}(L_{\widebar{K}})=0\), we have \(s^{\ast} c_{1,n}(L)=0\).
        \item Let \( U\subseteq V \) be open subsets of \(X\), \(s_V:G_K\to \pi_1(V)^{(\text{ab})}\), and \(s_V^{\text{a}}:G_K\to \pi_1(\Alb_V^1)\) be the corresponding section. Assume that \(\text{char }K=0\) and the divisors supported on \(X_{\widebar{K}}\setminus V_{\widebar{K}}\) generate the N\'eron-Severi group of \(X_{\widebar{K}}\). Then the following converse of (1) holds: if \((s_V|_{G_F})^\ast c_1(L)=0\) for every finite extension \(F/K\) and \(L\in \Pic^0(X_F)\cap  \langle D_F\rangle\), then \(s_V\) lifts to a section \(s_U:G_K\to \pi_1(U)^{(\text{ab})}\).
    \end{enumerate}
\end{lem}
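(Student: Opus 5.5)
For (1), the plan is to push the Chern class down to $U$, where it dies. Let $j:U\hookrightarrow X$ and $L\in\langle D\rangle$ with $c_{1,n}(L_{\widebar K})=0$. Since $L$ is represented by a divisor supported on $D$, the restriction $j^\ast L$ is trivial on $U$, so $c_{1,n}(j^\ast L)=j^\ast c_{1,n}(L)=0$ in $\HH^2_{\acute et}(U,\mu_n)$. We then transport this to group cohomology. The class $c_{1,n}(L)$ lies in $F^1\HH^2_{\acute et}(X,\mu_n)$, and by \cref{prop:F1 comparison} we may regard it in $F^1\HH^2(\pi_1(X)^{(\ab)},\mu_n)$. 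Its inflation along $\pi_1(U)^{(\ab)}\to\pi_1(X)^{(\ab)}$ lies in $F^1\HH^2(\pi_1(U)^{(\ab)},\mu_n)$, and by naturality of the morphism of spectral sequences in the proof of \cref{prop:F1 comparison} (applied to $U$, which is connected though not proper) it maps to $j^\ast c_{1,n}(L)=0$. Since the comparison map on $\HH^2$ is injective by the first assertion of \cref{prop:F1 comparison}, the inflated class is zero. Hence
\[
s^\ast c_{1,n}(L)=s_U^\ast\bigl(\text{inf}\,c_{1,n}(L)\bigr)=0.
\]
The only care needed is compatibility of the identifications for $U$ and $X$. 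This follows from functoriality of the Leray and Hochschild--Serre spectral sequences under $j$.

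For (2), the plan is to use the Albanese description and reduce to lifting through torus torsors. By \cref{thm:albanese} (4), in characteristic $0$ with the Néron--Severi hypothesis, $\pi_1(U)^{(\Z/n)}\simeq\pi_1(\Alb_U^1)^{(\Z/n)}$ and likewise for $V$. Passing to the inverse limit over $n$, lifting $s_V$ to $\pi_1(U)^{(\ab)}$ is then equivalent to lifting $s_V^{\text a}$ along $\pi_1(\Alb_U^1)\to\pi_1(\Alb_V^1)$. Here $\Alb_U^1\to\Alb_V^1$ is a torsor under the quasi-split torus $T=\prod_j\text{Res}_{F_j/K}\mathbb{G}_m$. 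The geometric fibre contributes $\pi_1(T_{\widebar K})=\T(T)$, so the obstruction to lifting lies in $\HH^2(K,\T(T))=\prod_j\HH^2(F_j,\hat\Z(1))$ by Shapiro's lemma. It therefore suffices to treat each factor separately over $F_j$, using the $\mathbb{G}_m$-torsor $\Alb^1_{U,F_j,\chi_j}\to\Alb^1_{V,F_j}$. The obstruction to lifting a section through a $\mathbb{G}_m$-torsor with associated line bundle $M$ is $s^\ast c_1(M)\in\HH^2(F_j,\mu_n)$ at each level $n$: the extension $\pi_1$ of the $\mu_n$-cover $M^{1/n}$ is classified by $c_{1,n}(M)$. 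So the obstruction for factor $j$ is $(s_V^{\text a}|_{G_{F_j}})^\ast c_1(\mathcal{O}(D^j))$.

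By \cref{lem:albanese line bundle 2}, some power $\mathcal{O}(D^j)^{\otimes m}$ is the pullback of an algebraically trivial bundle coming from an element of $\Pic^0(X_{F_j})\cap\langle D_{F_j}\rangle$. By hypothesis its Chern class obstruction vanishes, so the obstruction for $D^j$ is killed by $m$. The main obstacle is upgrading this to actual vanishing, since an $m$-torsion obstruction need not be zero. The first attempt would be to apply the hypothesis at level $nm$ rather than $n$. The classes $c_{1,nm}(\mathcal{O}(D^j))$ and $c_{1,n}$ are compatible under $\mu_{nm}\to\mu_n$, the $m$-th power map, and $c_{1,nm}(\mathcal{O}(D^j)^{\otimes m})$ maps to $m\cdot c_{1,n}$ up to this identification. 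Since the hypothesis holds for all $n$ and all finite $F$, this should give vanishing in the inverse limit $\HH^2(F_j,\hat\Z(1))$. If that fails, the fallback is to exploit that the set of lifts of the section is a torsor, so that torsion ambiguity can be corrected by adjusting the lift.

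Finally, the lifts obtained at each level must be assembled compatibly. Using finiteness of the relevant $\HH^1$'s, a compactness argument on the nonempty finite sets of lifts at each level $n$ yields a section $s_U$.
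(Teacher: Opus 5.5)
Your route is the paper's. Part (1) is fine. In (2) you reduce via \cref{thm:albanese} (4) to lifting along the \(T\)-torsor \(\Alb_U^1\to\Alb_V^1\), identify the finite-level obstruction with a Kummer Chern class, split it over the factors \(\operatorname{Res}_{F_j/K}\mathbb{G}_m\) by Shapiro, and use \cref{lem:albanese line bundle 2} plus the hypothesis to show that each component is killed by a fixed integer.

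The step you call the main obstacle, however, is not closed by what you wrote. Put \(x_n:=(s_V^{\mathrm a}|_{G_{F_j}})^\ast c_{1,n}(L_j^1)\in\HH^2(F_j,\mu_n)=\Br(F_j)[n]\), where \(L_j^1\) is the \(\mathbb{G}_m\)-torsor \(\Alb^1_{U,F_j,\chi_j}\) and \(m\) is your exponent. The hypothesis, applied at every level, gives \(m\,x_n=0\) for all \(n\). Your remark that \(c_{1,nm}(M^{\otimes m})\mapsto m\,c_{1,n}(M)\) under \(\mu_{nm}\to\mu_n\) only reproduces this; it does not give \(x_n=0\).

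The missing fact concerns the coefficient map \(\HH^2(F_j,\mu_{nm})\to\HH^2(F_j,\mu_n)\) induced by \(\zeta\mapsto\zeta^m\). By naturality of the Kummer sequences it sends \(x_{nm}\) to \(x_n\). Under the embeddings into \(\Br(F_j)\), it is multiplication by \(m\) from \(\Br(F_j)[nm]\) to \(\Br(F_j)[n]\). Hence \(x_n=m\,x_{nm}=0\). This is exactly the paper's argument: \((x_n)_n\) lies in \(\varprojlim_n\HH^2(F_j,\mu_n)=\HH^2(F_j,\widehat{\Z}(1))\), the Tate module of \(\Br(F_j)\), which is torsion-free. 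Your fallback does not work. The torsor structure on the set of lifts exists only once a lift exists, and it cannot change the obstruction class attached to the fixed section \(s_V\).

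The final assembly step also fails as stated in the lemma's generality. \(K\) is an arbitrary field of characteristic \(0\), and \(\HH^1(K,T[n])=\prod_j F_j^\times/F_j^{\times n}\) is typically infinite (e.g. over number fields), so a compactness argument on finite sets is unavailable. What rescues it is that the transition maps \(F_j^\times/F_j^{\times nm}\to F_j^\times/F_j^{\times n}\) are surjective. Hence \(\varprojlim^{1}\HH^1(K,T[n])=0\) and \(\HH^2(K,\T(T))\cong\varprojlim_n\HH^2(K,T[n])\). So vanishing at every finite level kills the obstruction to splitting the \(\T(T)\)-extension pulled back along \(s_V^{\mathrm a}\). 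Equivalently, run a Mittag-Leffler argument on the nonempty sets of \(T[n]\)-conjugacy classes of lifts; these are torsors under the \(\HH^1(K,T[n])\) with surjective transition maps. The paper simply says ``passing to the limit'' here, but the justification must be this one, not finiteness.
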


\begin{proof}
    (1) It is enough to prove the following: let \(s:G_K\to \pi_1(X)^{(ab)}\) be a section and \(L\) a line bundle such that \(c_1(L_{\bar{K}})\in\HH^2(X_{\bar{K}},\mu_n)\) vanishes. If \(s\) lifts to \(s_U:G_K\to \pi_1(U)^{\text{(ab)}}\) such that \(L|_U\) is trivial, then \(s^\ast c_1(L)=0\). The proof is straightforward:
    \[ s^\ast c_1(L)=s_U^\ast c_1(L|_U)=s_U^\ast c_1(\mathcal{O}_U)=0. \]

    (2) We translate the lifting problem into the splitting of the torus extension \(\Alb_U^1\to\Alb_V^1\). We then work at finite level \(T[n]\), where the extension is represented by a Chern class. Since \(T\) is quasi-split, Shapiro's lemma decomposes this obstruction into Chern class obstructions of line bundles; \cref{lem:albanese line bundle 2} allows us to compare these with the classes appearing in the hypothesis.
    
    We use notations introduced in \cref{lem:albanese line bundle 2}. Let \(T_U\), \(T_V\), and \(T\) be the kernels of \(\Alb_U^0\to \Alb_X^0\), \(\Alb_V^0\to \Alb_X^0\), and \(\Alb_U^0\to \Alb_V^0\), respectively. Then we have an exact sequence
    \[ 1\to T\to T_U\to T_V\to 1 \]
    and \(T=\prod_{j=1}^{m}\text{Res}_{F_j/K}\mathbb{G}_m\). Moreover, \(\Alb_U^1\to \Alb_V^1\) is a \(T\)-torsor.

    We want to lift a section \(s_V\) to \(s_U\). By \cref{thm:albanese} (4), this is equivalent to lifting \(s_V^{\text{a}}\) to \(s_U^{\text{a}}\). Hence, it is enough to prove that
    \[ 1\to  \pi_1(T_{\overline{K}}) \simeq  \mathbb{T}(T) \to \pi_1(\Alb_U^1)\to \pi_1(\Alb_V^1)\to 1  \]
    splits after pullback along \(s_V^{\text{a}}\).

    By passing to the limit, it is enough to prove this for each finite quotient of \(\pi_1(T)\). Since \(T[n]\) is a \(G_K\)-module, it may be regarded as a locally constant sheaf on \(\Alb_V^1\). By \cref{prop:F1 comparison}, we have
    \[ \HH^2(\pi_1(\Alb_V^1), T[n])\hookrightarrow \HH^2(\Alb_V^1, T[n]). \]
    Moreover, by \cite[Proposition 16]{BES15}, the class of the group extension
    \begin{equation}\label{eqn:ext}
        1\to  \pi_1(T)\otimes \Z/n \simeq  T[n]\to \pi_1(\Alb_U^1)/n\pi_1(T)\to \pi_1(\Alb_V^1)\to 1
    \end{equation}
    in \(\HH^2(\pi_1(\Alb_V^1), T[n])\) maps to \(c_1[\Alb_U^1]\), which is the image of \([\Alb_U^1]\in \HH^1(\Alb_V^1, T)\) under the boundary map corresponding to the Kummer exact sequence of \(T\). By abuse of notation, we also denote the extension class of \cref{eqn:ext} by \(c_1[\Alb_U^1]\).

    Now it is enough to show \((s_V^{\text{a}})^\ast c_1[\Alb_U^1]\in \HH^2(K, T[n]) \) is zero. Since \(T=\prod_{j=1}^{m}\text{Res}_{F_j/K}\mathbb{G}_m\), Shapiro's lemma implies that a \(T\)-torsor on \(\Alb_V^1\) corresponds to a collection of \(\mathbb{G}_m\)-torsors, i.e. line bundles \(L^1_j\) on \( \Alb_{V, F_j}^1 \). Moreover, \(T[n]=\prod_{j=1}^{m}\text{Res}_{F_j/K}\mu_n\), so
    \[\HH^2(K,T[n])=\prod_{j=1}^m \HH^2(F_j, \mu_n),\]
    and each component of \((s_V^{\text{a}})^\ast c_1[\Alb_U^1]\) corresponds to \((s_V^{\text{a}})^\ast c_1(L^1_j)\). Hence we need to show that they vanish. Although \(L_j^1\) is defined over \(F_j\), for simplicity we write \(s_V^{\mathrm a}\) in place of \(s_V^{\mathrm a}|_{G_{F_j}}\).

    Indeed, by the discussion before \cref{lem:albanese line bundle 2}, the line bundles \(L_j^1\) correspond to \(\Alb_{U, F_j, \chi_j}^1\). Moreover, by \cref{lem:albanese line bundle 2}, there exists \(d\in \Z_{>0}\) such that \((\Alb_{U, F_j, \chi_j}^1)^{\otimes d}\) is the pullback of an algebraically trivial line bundle \(L_j'\) on \(\Alb_{X, F_j}^1\). Let \(L_j:=a^\ast L_j'\), so that \(L_j^{1, \otimes d}=\pi^\ast L_j'\), where \(\pi:\Alb_V^1\to \Alb_X^1\). Since there exists a map \(\Alb_{U, F_j}^1\to \Alb_{U, F_j, \chi_j}^1\), the line bundle \(L_j'\) becomes trivial on \(\Alb_{U, F_j}^1\), and hence \(L_j\) is trivial on \(U_{F_j}\). Therefore, \(L_j\) corresponds to a class in \(\Pic^0(X_{F_j})\cap  \langle D_{F_j}\rangle\). By assumption, 
    \[ (s^{\text{a}}|_{G_{F_j}})^\ast c_1(L_j')=s|_{G_{F_j}}^\ast c_1(L_j)=0 \]
    where \(s\) and \(s^a\) are induced geometrically abelian sections on \(X\) and \(\Alb_X^1\) of \(s_V\). Thus \(d\cdot (s_V^{\text{a}})^\ast c_1(L_j^1)=0\).

    Note that this holds for every \(n\). Passing to the limit, we obtain the class \((s_V^{\text{a}})^\ast c_1(L_j^1)\) as an element of \( \varprojlim \HH^2(F_j, \mu_n)=\HH^2(F_j, \widehat{\Z}(1)) \). Since \(\HH^2(F_j, \widehat{\Z}(1))\) is the Tate module of \(\Br{F_j}\), it is torsion-free (see, e.g. \cite[Section 2.2]{SZ08}). Hence, the equality \(d\cdot (s_V^{\text{a}})^\ast c_1(L_j^1)=0\) implies \((s_V^{\text{a}})^\ast c_1(L_j^1)=0\). Therefore, \((s_V^{\text{a}})^\ast c_1(\Alb_U^1)=0\), and the statement follows.
\end{proof}

The second ingredient is that \((\mathrm{VP0})\) and \((\mathrm{VP1})\) make the boundary classes appearing in Lemma 6.1 dense in the relevant Picard groups.

\begin{lem}\label{lem:Picard_vaishing}
Let \(X, U, D, \langle D\rangle\) as in \cref{lem:albanese lifting}.
\begin{enumerate}
    \item If \(\widehat{\Pic^0(U)}=0\), then \(\Pic^0(X)\cap \langle D\rangle\) is dense in \(\Pic^0(X)\) for the profinite topology. Equivalently, the classes of divisors supported on \(D\) topologically generate \(\Pic^0(X)\).
    \item If \(\widehat{\Pic(U)}=0\), then \(\langle D\rangle\) is dense in \(\Pic(X)\) for the profinite topology. Equivalently, the classes of divisors supported on \(D\) topologically generate \(\Pic(X)\).
\end{enumerate}
\end{lem}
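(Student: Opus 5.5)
The plan is to use the restriction exact sequence for Picard groups of the smooth variety \(X\) and its open subset \(U\):
\[
\langle D\rangle \longrightarrow \Pic(X)\xrightarrow{\ \mathrm{res}\ } \Pic(U)\longrightarrow 0,
\]
which is exact because every divisor on \(U\) extends to \(X\) by closure, and \(X\) is smooth so Weil and Cartier divisors agree. Thus \(\Pic(U)\simeq \Pic(X)/\langle D\rangle\).

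For (2), the first step is to note that profinite completion is right exact on abelian groups. More precisely, for a quotient \(A\to A/B\), the completion of \(A/B\) equals \(\widehat{A}\) modulo the closure of the image of \(B\). Indeed, finite quotients of \(A/B\) are exactly the finite quotients of \(A\) that kill \(B\). So \(\widehat{\Pic(U)}=0\) means that every finite quotient of \(\Pic(X)\) kills \(\langle D\rangle\) only if it is trivial. Equivalently, every finite-index subgroup of \(\Pic(X)\) containing \(\langle D\rangle\) is all of \(\Pic(X)\), which is precisely density of \(\langle D\rangle\) in the profinite topology. The equivalence with topological generation is a restatement.

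For (1), I would run the same argument with \(\Pic^0\). Restriction sends \(\Pic^0(X)\) into \(\Pic^0(U)\), where \(\Pic^0(U)\) is the classes of algebraically trivial bundles (images of \(\Pic^0(X)\)). If this is the paper's convention, then \(\Pic^0(X)\to\Pic^0(U)\) is surjective with kernel \(\Pic^0(X)\cap\langle D\rangle\), and the right exactness argument applies verbatim. The main point to check is this surjectivity, that is, that \(\Pic^0(U)\) is by definition, or can be shown to be, the image of \(\Pic^0(X)\).

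If instead \(\Pic^0(U)\) is defined intrinsically, for instance via the kernel of \(\Pic(U)\to \mathrm{NS}\)-type data, I would argue as follows. Any element of \(\Pic^0(U)\) extends to a bundle on \(X\) whose class in \(\mathrm{NS}(X_{\bar K})\) lies in the image of divisors supported on \(D\). After modifying by an element of \(\langle D\rangle\), one would like the extension to land in \(\Pic^0(X)\) up to finite index. The torsion and finite-index issues disappear after profinite completion only up to finite groups, so I would prefer the image definition.

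In either case, a finite-index subgroup \(H\subseteq \Pic^0(X)\) containing \(\Pic^0(X)\cap\langle D\rangle\) yields a finite quotient \(\Pic^0(X)/H\) of \(\Pic^0(U)\). This quotient must be trivial, which gives density.
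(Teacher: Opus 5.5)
Your argument is correct and is essentially the paper's proof: the paper also uses regularity of \(X\) to get \(\Pic(U)\simeq \Pic(X)/\langle D\rangle\) and \(\Pic^0(U)\simeq \Pic^0(X)/(\Pic^0(X)\cap\langle D\rangle)\). It then applies the criterion that \(B\subseteq A\) is dense for the profinite topology if and only if \(A/B\) has trivial profinite completion. The paper asserts the second isomorphism without further comment. In doing so it uses exactly the surjectivity of \(\Pic^0(X)\to\Pic^0(U)\) that you flagged as the point your final step depends on, so your preferred ``image'' reading is the one the paper relies on.
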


\begin{proof}
Since \(X\) is smooth, it is regular, and 
\[\Pic(X)/\langle D\rangle\simeq \Pic(U).\]
Similarly, restricting to algebraically trivial line bundles gives
\[\Pic^0(X)/\bigl(\Pic^0(X)\cap\langle D\rangle\bigr)\simeq \Pic^0(U).\]

Recall that for an abelian group \(A\) and a subgroup \(B\subseteq A\), the subgroup \(B\) is dense in \(A\) for the profinite topology if and only if \(A/B\) has trivial profinite completion. Applying this observation to \(A=\Pic^0(X), B=\Pic^0(X)\cap\langle D\rangle,\) the condition \(\widehat{\Pic^0(U)}=0\) proves (1). Applying it to \(A=\Pic(X), B=\langle D\rangle,\) the condition \(\widehat{\Pic(U)}=0\) proves (2).
\end{proof}

Combining \cref{lem:albanese lifting,lem:Picard_vaishing} with \cref{thm:main 2}, we obtain our first main application, which generalizes \cref{thm:main}.

\begin{thm}\label{thm:genmain}
    Let \(X\) be a smooth proper variety over a \(p\)-adic field \(K\), let \(s:G_K\to \pi_1(X)^{(\text{ab})}\) be a section of \(\pi_1(X)^{(\text{ab})}\to G_K\), and let \(s^a\) be the corresponding section. Choose a finite Galois extension \(F/K\) such that \(X(F)\ne \emptyset\), and an open subvariety \(U\subseteq X\) such that \(U_F\) satisfies (VP0). Then \(s^a\) is geometric whenever \(s\) lifts to a section \(s_U:G_K\to \pi_1(U)^{(\text{ab})}\).
\end{thm}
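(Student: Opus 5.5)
By \cref{thm:main 2}, equivalently \cref{cor:comp}, it suffices to show that \(s\) is orthogonal to \(\Pic^0\). Since \(K\) is \(p\)-adic, \cref{prop:uniform ortho} lets us reduce further: it is enough to prove that \(s|_{G_F}\) is orthogonal to \(\Pic^0\). Indeed, if \(s\) is uniformly orthogonal then it is orthogonal. Conversely, \cref{cor:torsor} together with the étale sheaf property of \cref{prop:sheaf} for \(\mathscr{S}_{\Alb_X^1}\) allows us to descend geometricity of \((s|_{G_F})^a\) to \(s^a\). So the plan is to work over \(F\), where \(U_F\) satisfies (VP0), and then descend.

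Over \(F\), the section \(s_U|_{G_F}\) is a lift of \(s|_{G_F}\) to \(\pi_1(U_F)^{(\ab)}\). Fix \(n\) prime to \(p\), or more generally prime to the characteristic, which here is \(0\). By \cref{lem:albanese lifting} (1), every \(L\in \Pic^0(X_F)\cap\langle D_F\rangle\) satisfies \((s|_{G_F})^\ast c_{1,n}(L)=0\). By \cref{lem:Picard_vaishing} (1), applied to \(X_F\), this subgroup is dense in \(\Pic^0(X_F)\) for the profinite topology, so it surjects onto \(\Pic^0(X_F)/n\).

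It remains to see that the Chern class obstruction \(L\mapsto (s|_{G_F})^\ast c_{1,n}(L)\) is a homomorphism \(\Pic^0(X_F)\to \HH^2(F,\mu_n)\) that factors through \(\Pic^0(X_F)/n\). It is additive because \(c_{1,n}\) is additive. It kills \(n\)-th powers because \(c_{1,n}(M^{\otimes n})=0\), and hence \(s^\ast\) of it is \(0\). Alternatively, \cref{thm:Chern vs Weil}, together with a rational point \(x\in X(F)\) giving a geometric base section \(s_O\) with \(s_O^\ast c_1=0\), expresses the obstruction as \(e(\delta L\cup[s-s_O])\), which is visibly a homomorphism killed by \(n\). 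The existence of \(s_O\) is where we use \(X(F)\ne\emptyset\).

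A homomorphism out of \(\Pic^0(X_F)/n\) that vanishes on the image of a dense subgroup vanishes identically, since that image is all of \(\Pic^0(X_F)/n\). Hence \(s|_{G_F}\) is orthogonal to \(\Pic^0\). By \cref{cor:comp} over \(F\), the section \((s|_{G_F})^a=s^a|_{G_F}\) is geometric.

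Finally, we descend to \(K\). The section \(s^a|_{G_F}\) is \(\mathrm{Gal}(F/K)\)-invariant as a conjugacy class and comes from an \(F\)-point of \(\Alb_X^1\). The sheaf property of \(\mathscr{S}_{\Alb_X^1}\) from \cref{prop:sheaf}, together with the injectivity \(\Alb_X^1\hookrightarrow\mathscr{S}_{\Alb_X^1}\), then shows that \(s^a\) is geometric over \(K\), exactly as in the proof of \cref{cor:torsor}.

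The step I expect to be the main obstacle is the middle one: showing that the obstruction genuinely factors through \(\Pic^0(X_F)/n\). We must make sure that the density from \cref{lem:Picard_vaishing} is density for the subgroup relevant after reduction mod \(n\), and that the homomorphism property holds independently of the choice of \(n\)-th roots. Both are handled by the explicit pairing formula of \cref{thm:Chern vs Weil}.
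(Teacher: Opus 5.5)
Your proposal is correct and follows essentially the paper's own argument: over \(F\) you combine \cref{lem:albanese lifting}~(1) with the density from \cref{lem:Picard_vaishing}~(1) to get orthogonality of \(s|_{G_F}\) to \(\Pic^0\), apply \cref{cor:comp}, and descend to \(K\) via \cref{prop:sheaf} exactly as in \cref{cor:torsor}. The step you flag as the main obstacle is actually immediate: the paper just notes that \(s^\ast c_1(-)\colon \Pic^0(X_F)\to\HH^2(F,\mu_n)\) is a homomorphism, and any such map is killed by \(n\) and has open kernel. Also, the reduction to \(F\) in your first paragraph is justified by the descent step, not by \cref{prop:uniform ortho}.
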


\begin{proof}
    First, consider the case where \(X(K)\ne \emptyset\). Let \(U\) be as in the statement with \(K=F\). If \(s\) lifts to \(s_U\), then by \cref{lem:albanese lifting}, we have \(s^{ \ast} c_1(L)=0\) for every \(L\in  \Pic^0(X)\cap \langle D\rangle\) where \(D:=X\setminus U\). Note that \(s^{\ast} c_1(-)\) is a homomorphism
    \[ s^{\ast} c_1(-):\Pic^0(X)\to \HH^2(K, \mu_n). \]
    By \cref{lem:Picard_vaishing}, it follows that \(s^{\ast} c_1(L)=0\) for every \(L\in \Pic^0(X)\) and every \(n\). Therefore, \(s\) is orthogonal to \(\Pic^0\). Since \(X(K)\ne \emptyset\), \cref{cor:comp} implies that \(s^\text{a}\) is geometric.

    Now consider the general case. Choose a finite Galois extension \(F/K\) such that \(X(F)\ne\emptyset\), and let \(U\) be as in the statement. Assume that \(s\) is a section that lifts to \(s_U:G_K\to \pi_1(U)^{(\text{ab})}\). Then \(s|_{G_F}\) lifts to \(s_U|_{G_F}:G_F\to \pi_1(U_F)^{(\text{ab})}\). By the previous paragraph, \(s^{\text{a}}|_{G_F}\) is geometric. By the same Galois descent argument as in the proof of \cref{cor:torsor}, it follows that \(s^\text{a}\) is geometric.
\end{proof}

\begin{proof}[Proof of \cref{thm:main}]
Suppose first that \(s\) lifts to a section
\[ s_U:G_K\to \pi_1(U)^{(\text{ab})}.\]
Then \cref{thm:genmain} implies that the original section \(s\) is geometric. Conversely, if \(s\) is geometric, then \(s\) admits a lifting to a birational section, hence in particular to a section of \(\pi_1(U)^{(\text{ab})}\). This completes the proof.
\end{proof}

\cref{lem:albanese lifting} also enables us to prove \cref{cor:period index}.

\begin{proof}[Proof of \cref{cor:period index}]
    (1) Let \(s_{\text{bir}}:G_K\to G_{K(X)}^{(\ab)}\) be a birational section, and let \(s:G_K\to \pi_1(X)^{(\text{ab})}\) be the induced section of \(\pi_1(X)^{(\text{ab})}\to G_K\). Then \(s\) is orthogonal to \(\Pic\), by \cref{lem:albanese lifting} (1). The statement now follows from \cref{thm:relative brauer}.

    (2) By \cref{lem:Picard_vaishing}, the divisors supported on \(X\setminus U\) are dense in \(\Pic(X)\). For the first assertion, it is enough to show that the induced section \(s:G_K\to \pi_1(X)^{(\text{ab})}\) is orthogonal to \(\Pic\). If \(L\in \Pic^0(X)\), then \(s^\ast c_1(L)=0\) follows from \cref{lem:albanese lifting} (1), as in the proof of \cref{thm:genmain}. For a general \(L\in \Pic(X)\), choose \(E_1,\cdots, E_m\) supported on \(X\setminus U\) such that \(L':=L-\sum_{i=1}^{m}a_iE_i\in\Pic^0(X) \) for some \(a_1,\cdots, a_m\in \Z\). If \(c_1(L_{\bar{K}})\in\HH^2(X_{\bar{K}},\mu_n)\) vanishes, then for \(i:U\hookrightarrow X\),
    \[ s^\ast c_1(L)=s_U^\ast c_1(L|_U)=s_U^\ast c_1(L'|_U)=s^\ast c_1(L')=0 \]
    since \(L'\in \Pic^0(X)\). Therefore, \(s\) is orthogonal to \(\Pic\). Hence, \cref{thm:relative brauer} completes the proof. The last assertion now follows from \cite{Lic69}, since \(X\) has period \(1\) by \cref{cor:comp}.
\end{proof}

Finally, we present an application of \cref{thm:genmain} to the computation of the generic period of a variety. Recall, as mentioned in \cref{sec:intro}, that for a smooth proper variety \(X\), the \textit{generic period} is the supremum of the periods of open subsets of \(X\).

\begin{thm}\label{thm:EW}(\cite{Wit08, EW10})
    Let \(X\) be a smooth proper variety over a \(p\)-adic field \(K\). Then the generic period of \(X\) is \(1\) if and only if
    \begin{equation}\label{eqn:elementary}
        1\to G_{\widebar{K}(X)}^{\text{ab}}\to \pi_1(K(X))^{(\text{ab})}\to G_K\to 1
    \end{equation}
    admits a section.
\end{thm}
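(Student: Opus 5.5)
The plan is to prove the two implications separately, in both cases passing through the Albanese torsors \(\Alb_U^1\) of small open subsets \(U\subseteq X\).

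\emph{Section \(\Rightarrow\) generic period \(1\).} Let \(s_{\mathrm{bir}}\) be a section of \cref{eqn:elementary}, and let \(U\subseteq X\) be a nonempty open subset. It suffices to show \(\Alb_U^1(K)\neq\emptyset\).

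\begin{enumerate}
\item Since \(s_{\mathrm{bir}}\) induces sections \(s_W\) of \(\pi_1(W)^{(\ab)}\) for every open \(W\), we may shrink \(U\). We choose a finite Galois \(F/K\) with \(X(F)\ne\emptyset\), and we take \(U\) so small that \(U_F\) satisfies (VP0) and the divisors supported on \(X_{\widebar K}\setminus U_{\widebar K}\) generate \(\operatorname{NS}(X_{\widebar K})\). A point of a smaller open's Albanese torsor maps to \(\Alb_U^1(K)\) for the original \(U\), so shrinking is harmless.
\item \cref{thm:genmain} then shows that the induced section \(s^{\mathrm a}\) of \(\pi_1(\Alb_X^1)\) is geometric. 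Hence there is a point \(x\in\Alb_X^1(K)\), and \(s^{\mathrm a}\) is conjugate to \(s_x\).
\item Let \(P\) be the fibre of the \(T_U\)-torsor \(\Alb_U^1\to\Alb_X^1\) over \(x\) (\cref{thm:albanese} (3)). The section \(s_U^{\mathrm a}\) lies over a conjugate of \(s_x\). Pulling back the extension \(1\to \T(T_U)\to\pi_1(\Alb_U^1)\to\pi_1(\Alb_X^1)\to1\) along \(s_x\) therefore gives the extension attached to \(P\), and this pulled-back extension is split.
\item As in the proof of \cref{lem:albanese lifting} (2), via \cite[Proposition 16]{BES15}, the mod-\(n\) extension class is the Kummer boundary of \([P]\) in \(\HH^2(K,T_U[n])\). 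Splitting for all \(n\) gives \([P]\in\bigcap_n n\HH^1(K,T_U)\).
\item For a \(p\)-adic field \(\HH^1(K,T_U)\) is finite, so this intersection is zero. Thus \([P]=0\), so \(P(K)\neq\emptyset\) and \(\Alb_U^1(K)\neq\emptyset\).
\end{enumerate}

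\emph{Generic period \(1\) \(\Rightarrow\) section.} For every open \(U\) we have \(\Alb_U^1(K)\neq\emptyset\), so \(\pi_1(\Alb_U^1)\to G_K\) has a geometric section. For \(U\) small enough that the boundary divisors generate the Néron--Severi group, \cref{thm:albanese} (4) gives \(\pi_1(U)^{(\Z/n)}\simeq\pi_1(\Alb_U^1)^{(\Z/n)}\). Hence the set \(\Sigma_{U,n}\) of conjugacy classes of sections of \(\pi_1(U)^{(\Z/n)}\to G_K\) is nonempty.

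Each \(\Sigma_{U,n}\) is a torsor under a quotient of \(\HH^1(K,\pi_1(U_{\widebar K})^{\ab}/n)\), which is finite for \(p\)-adic \(K\). So the \(\Sigma_{U,n}\) form an inverse system of nonempty finite sets indexed by pairs \((U,n)\), and its limit is nonempty. We have \(G_{K(X)}^{(\ab)}=\varprojlim_{U,n}\pi_1(U)^{(\Z/n)}\), so a compatible family of conjugacy classes should yield a section of \cref{eqn:elementary}.

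\emph{Expected main obstacle.} The delicate step is this final passage from compatible conjugacy classes to an actual section. I would replace conjugacy classes by the sets of genuine sections. These are nonempty and compact: each is closed in the profinite space of continuous maps \(G_K\to\pi_1(U)^{(\Z/n)}\), which is compact because the geometric kernel \(\pi_1(U_{\widebar K})^{\ab}/n\) is finite. The inverse limit of nonempty compact Hausdorff spaces is nonempty, and any element of it is a section of \cref{eqn:elementary}.

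A second point needing care is the identification in step 3 of the first direction. One must check that pulling back along a section conjugate to \(s_x\) really computes the Kummer class of the fibre \(P\).
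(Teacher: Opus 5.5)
Your proposal is correct, and it reaches the result by a partly different route from the paper. The paper does not prove \cref{thm:EW} on its own terms; it obtains it as a by-product of \cref{thm:generic period}.

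\textbf{Section \(\Rightarrow\) generic period \(1\).} Your argument here is the paper's. \cref{thm:genmain} (via \cref{cor:comp}) makes \(s^{\mathrm a}\) geometric at a point \(x\). The fibre \(P\) of \(\Alb_V^1\to\Alb_X^1\) over \(x\) then has a split fundamental sequence, so its Kummer boundary vanishes. Finiteness of \(\HH^1(K,T)\) forces \([P]=0\), which is exactly \cref{lem:torus}.

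\textbf{Generic period \(1\Rightarrow\) section.} Here you differ from the paper. The paper uses a rational point of \(\Alb_U^1\) for only one suitable \(U\), which gives a section of \(\pi_1(U)^{(\ab)}\). It must then lift this section to every smaller \(V\). It does so using uniform orthogonality to \(\Pic^0\) and \cref{lem:albanese lifting}~(2), which splits the quasi-split torus extension \(\Alb_V^1\to\Alb_U^1\) via Shapiro's lemma and Chern class obstructions. Only after that does it apply compactness.

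You instead use the hypothesis at every level. Each \(\Alb_V^1(K)\) is nonempty, so \cref{thm:albanese}~(4) directly gives sections of every \(\pi_1(V)^{(\Z/n)}\), and compactness finishes. For the equivalence as stated, your route is more elementary and avoids the Chern class and lifting machinery entirely. The paper's machinery is what buys the stronger \cref{thm:generic period}, where only a single section over one \(U\) is assumed.

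\textbf{One justification needs fixing.} The set of continuous maps \(G_K\to\pi_1(V)^{(\Z/n)}\) is not compact merely because the kernel \(A=\pi_1(V_{\overline K})^{\ab}/n\) is finite; for instance, \(C(G_K,A)\) is an infinite discrete space. What actually makes the set of sections finite is the following. For a \(p\)-adic field, \(G_K\) has only finitely many open subgroups of each index, so the set of continuous cocycles \(Z^1(G_K,A)\) is finite.

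Alternatively, you can combine your two formulations. First choose an element of \(\varprojlim\Sigma_{V,n}\); this limit is nonempty because each \(\Sigma_{V,n}\) is a finite nonempty torsor under \(\HH^1(K,A)\). Then observe that the genuine sections lying in a fixed conjugacy class form a single orbit of the finite group \(A\), hence a finite set. The inverse limit of these nonempty finite sets yields the desired section.
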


Indeed, \cite{Wit08} proved that the generic period of \(X\) is \(1\) if and only if the \textit{elementary obstruction} of \(X\) vanishes, and \cite{EW10} proved that the elementary obstruction vanishes if and only if \cref{eqn:elementary} admits a section. Here, we prove a strengthening of \cref{thm:EW} whose proof is independent of \cite{Wit08, EW10}.

\begin{thm}\label{thm:generic period}
    Let \(X\) be a smooth proper variety over a \(p\)-adic field \(K\). Choose a finite Galois extension \(F/K\) with \(X(F)\neq\emptyset\) and \(\text{NS}(X_F)=\text{NS}(X_{\widebar{K}})\), and an open subset \(U\subseteq X\) such that \(U_F\) satisfies \textup{(VP1)}. Then the morphism \(\pi_1(U)^{(\mathrm{ab})}\to G_K\) admits a section if and only if the generic period of \(X\) is \(1\).
\end{thm}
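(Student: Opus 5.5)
The plan is to prove the two implications separately. The direction ``generic period \(1\) \(\Rightarrow\) section'' goes through the Albanese torsor of \(U\) and \cref{thm:albanese}~(4). The converse combines \cref{thm:genmain}, \cref{lem:albanese lifting}~(2), and a duality argument for torsors under tori.

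First we record a consequence of the hypotheses. Since \(U_F\) satisfies (VP1), \cref{lem:Picard_vaishing}~(2) shows that \(\langle D_F\rangle\) is dense in \(\Pic(X_F)\), where \(D=X\setminus U\). Because \(\operatorname{NS}(X_F)=\operatorname{NS}(X_{\widebar K})\) is finitely generated, density forces the divisors supported on \(X_{\widebar K}\setminus U_{\widebar K}\) to generate \(\operatorname{NS}(X_{\widebar K})\) modulo every \(n\). Using the \(\Z/n\)-reduction of the argument in \cite{MT03} (we should check this is the form needed), we expect \(\pi_1(U)^{(\Z/n)}\simeq\pi_1(\Alb_U^1)^{(\Z/n)}\) for all \(n\), since \(\operatorname{char}K=0\). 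Passing to the limit would then give \(\pi_1(U)^{(\ab)}\simeq\pi_1(\Alb_U^1)^{(\ab)}\). The same isomorphism holds for every smaller open \(V\subseteq U\), because the boundary only grows.

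(\(\Leftarrow\)) Suppose the generic period is \(1\). Then \(\Alb_U^1(K)\neq\emptyset\). A rational point gives a geometric section of \(\pi_1(\Alb_U^1)\to G_K\), hence of \(\pi_1(\Alb_U^1)^{(\ab)}\to G_K\). The isomorphism above transports it to a section of \(\pi_1(U)^{(\ab)}\to G_K\).

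(\(\Rightarrow\)) Let \(s_U\) be a section and \(s\) the induced section on \(X\). We must show that \(\Alb_V^1(K)\neq\emptyset\) for every nonempty open \(V\). Since \(\Alb_{V'}^1\to\Alb_V^1\) for \(V'\subseteq V\), it suffices to treat \(V\subseteq U\). The argument has three steps.

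\emph{Step 1.} By \cref{thm:genmain}, applied with the (VP1), hence (VP0), hypothesis on \(U_F\), the section \(s^{\mathrm a}\) is geometric, coming from some \(x\in\Alb_X^1(K)\). Geometricity is preserved by restriction to any \(G_{F'}\). So by \cref{thm:main 2} applied over \(F'\), each \(s|_{G_{F'}}\) is orthogonal to \(\Pic^0\). In particular \((s_U|_{G_{F'}})^\ast c_1(L)=0\) for all \(L\in\Pic^0(X_{F'})\cap\langle (X\setminus V)_{F'}\rangle\).

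\emph{Step 2.} Apply \cref{lem:albanese lifting}~(2), with \(U\) playing the role of ``\(V\)'' there and \(V\) the role of ``\(U\)''. Its NS-generation hypothesis was verified in the first paragraph. This lifts \(s_U\) to a section \(s_V\) of \(\pi_1(V)^{(\ab)}\), and hence gives a section \(s_V^{\mathrm a}\) of \(\pi_1(\Alb_V^1)\) lying over \(s^{\mathrm a}\).

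\emph{Step 3.} By \cref{thm:albanese}~(3), \(\Alb_V^1\to\Alb_X^1\) is a \(T_V\)-torsor. Let \(P\) be its fibre over \(x\), a \(T_V\)-torsor over \(K\). After conjugating, \(s^{\mathrm a}=s_x\), and \(\pi_1(P)\) is the pullback of \(1\to\T(T_V)\to\pi_1(\Alb_V^1)\to\pi_1(\Alb_X^1)\to 1\) along \(s_x\). Therefore \(s_V^{\mathrm a}\) induces a section of \(\pi_1(P)\to G_K\). Finally we show that a \(T\)-torsor over a \(p\)-adic field admitting such a section is trivial. At level \(n\), the obstruction to a section is the image of \([P]\) under the Kummer boundary \(\HH^1(K,T)\to\HH^2(K,T[n])\), whose kernel is \(n\HH^1(K,T)\). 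Hence \([P]\) is divisible in \(\HH^1(K,T)\). That group is finite by Tate--Nakayama duality, so a divisible element is \(0\). Thus \(P(K)\neq\emptyset\), and so \(\Alb_V^1(K)\neq\emptyset\).

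The main obstacle is Step 3: making precise that the section of \(\pi_1(\Alb_V^1)\) restricts to a section of \(\pi_1(P)\). This needs the homotopy exact sequence for the torus torsor, and the conjugation adjustment \(s^{\mathrm a}\sim s_x\) must be carried compatibly into \(\pi_1(\Alb_V^1)\). A secondary point to check is the passage from the mod-\(n\) isomorphisms of \cref{thm:albanese}~(4) to the full \(\pi_1^{(\ab)}\) using only (VP1) over \(F\).
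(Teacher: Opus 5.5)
Your proposal is correct and follows the paper's proof essentially step for step. The easy direction uses the isomorphism $\pi_1(U)^{(\ab)}\simeq\pi_1(\Alb_U^1)$ from \cref{thm:albanese}~(4). The converse uses orthogonality to $\Pic^0$ via \cref{thm:genmain}, the lift through \cref{lem:albanese lifting}~(2), and triviality of the fibre torsor $P$ over the Albanese point, exactly as in \cref{lem:torus}. The one deviation is that you lift to each $V$ separately instead of assembling a compatible birational section $s_{K(X)}$ by compactness. This is harmless, because the direct argument only ever uses one $V$ at a time; the paper builds $s_{K(X)}$ only to connect with the Esnault--Wittenberg statement.
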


The following lemma is essentially contained in \cite[Proof of Theorem A]{Sai21}, but we include it for convenience.

\begin{lem}\label{lem:torus}
    Let \(K\) be a \(p\)-adic field, let \(T\) be a torus over \(K\), and let \(P\) be a \(T\)-torsor over \(K\). If \( \pi_1(P)\to G_K \) admits a section, then \(P\) is a trivial \(T\)-torsor.
\end{lem}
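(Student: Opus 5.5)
The plan is to prove that the class \([P]\in\HH^1(K,T)\) is divisible by every \(n\), and then to use the finiteness of \(\HH^1(K,T)\) over a \(p\)-adic field to conclude that it vanishes.

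Fix \(n\ge 1\). Since \(\operatorname{char}K=0\), we have \(P_{\widebar K}\simeq T_{\widebar K}\), so \(\pi_1(P_{\widebar K})=\T(T)\) is abelian and \(\pi_1(P_{\widebar K})\otimes\Z/n\simeq T[n]\). Reducing the geometric part modulo \(n\) gives an extension
\[
1\to T[n]\to \pi_1(P)/n\T(T)\to G_K\to 1 .
\]
I would identify its class in \(\HH^2(K,T[n])\) with \(\delta_n[P]\), where \(\delta_n:\HH^1(K,T)\to\HH^2(K,T[n])\) is the boundary map of the Kummer sequence \(0\to T[n]\to T\xrightarrow{n}T\to 0\). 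This is the zero-dimensional analogue of the comparison from \cite[Proposition 16]{BES15} used in the proof of \cref{lem:albanese lifting}, with \(P\) regarded as a \(T\)-torsor over \(\operatorname{Spec}K\).

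To avoid relying on that citation, I would argue directly through the cover defined by the section \(s\). The open subgroup \(s(G_K)\cdot n\T(T)\subseteq\pi_1(P)\) corresponds to a geometrically connected finite étale cover \(P_n\to P\) defined over \(K\), which over \(\widebar K\) is the isogeny \([n]:T\to T\). By Galois descent of the \(T_{\widebar K}\)-action, \(P_n\) is a \(T\)-torsor over \(K\), and the map \(P_n\to P\) is equivariant along \([n]\). It follows that \(P\simeq P_n\times^{T,[n]}T\), so \(n[P_n]=[P]\).

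Either way, since \(s\) splits the extension (equivalently, since \(P_n\) exists), we get \([P]\in n\HH^1(K,T)\) for every \(n\). By Tate--Nakayama duality, \(\HH^1(K,T)\) is dual to \(\HH^1(K,X^\ast(T))\), which is finite. A finite group has no nonzero element that is divisible by all \(n\); taking \(n\) to be its exponent, \([P]\in n\HH^1(K,T)=0\). Hence \(P\) is a trivial \(T\)-torsor.

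The step I expect to be the main obstacle is the identification of the cover, namely showing that \(P_n\) carries a \(K\)-rational \(T\)-torsor structure compatible with \([n]\), or equivalently that the extension class equals \(\delta_n[P]\) with correct signs. I would handle this by working over \(\widebar K\), where everything is the explicit isogeny \([n]:T\to T\) with its deck group \(T[n]\). I would then check that the \(G_K\)-action induced by the section commutes with the action of \(T_{\widebar K}\) by translation, since both lift the Galois action on \(P_{\widebar K}\), and descend from there.
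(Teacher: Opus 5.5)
Your proposal is correct and is essentially the paper's proof. The paper identifies the class of \(1\to\pi_1(P_{\overline K})\to\pi_1(P)\to G_K\to 1\) with the Kummer boundary \(\delta[P]\in\HH^2(K,\T(T))\) via \cite[Section 2]{HS09}; a section then makes \([P]\) divisible, and the finiteness of \(\HH^1(K,T)\) (\cite[II.5.8, Theorem 6]{Ser01}, equivalent to your Tate--Nakayama argument) gives \([P]=0\). You run the same argument level by level in \(n\), and your direct construction of \(P_n\) is a valid substitute for the citation. The cleanest way to descend the \(T\)-action on \(P_n\) is by uniqueness: a lift of the \([n]\)-twisted action through the finite \'etale cover \(P_n\to P\) is unique, since two lifts agree at \(t=1\) and \(T\times P_n\) is connected, so the action is automatically Galois-invariant.
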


\begin{proof}
    Let \( [P]\in \HH^1(K, T) \) be the cohomology class corresponding to \(P\). Then the boundary map in the Kummer exact sequence
    \[\delta: \HH^1(K, T)\to \HH^2(K, \T(T)) \]
    sends \([P]\) to the extension \( 1\to \pi_1(P_{\widebar{K}})\simeq \pi_1(T_{\widebar{K}}) \to \pi_1(P)\to G_K\to 1 \), by \cite[Section 2]{HS09}. Hence, if \( \pi_1(P)\to G_K \) admits a section, then \(\delta([P])=0\), so by the Kummer exact sequence, \([P]\) is divisible. However, by \cite[II.5.8, Theorem 6]{Ser01}, \(\HH^1(K, T)\) is finite, so \([P]=0\).
\end{proof}

\begin{proof}[Proof of \cref{thm:generic period}]
    By \cref{thm:albanese} (4) and \cref{lem:Picard_vaishing}, \(\pi_1(U)^{(\text{ab})}\simeq \pi_1(\Alb_U^1)\). Therefore, if the generic period of \(X\) is \(1\), then \(\Alb_U^1\) has a \(K\)-rational point. This point induces a section \(G_K\to \pi_1(\Alb_U^1)\). Via the isomorphism \(\pi_1(U)^{(\text{ab})}\simeq \pi_1(\Alb_U^1)\), this yields a section of \( \pi_1(U)^{(\text{ab})}\to G_K \).

    We now prove that if \(\pi_1(U)^{(\text{ab})}\to G_K\) admits a section \(s_U\), then the generic period of \(X\) is \(1\). Let \(s_U^{\text{a}}:G_K\to \pi_1(\Alb_U^1)\) be the associated section, and let \(s:G_K\to \pi_1(X)^{(\text{ab})}\) and \(s^{\text{a}}:G_K\to \pi_1(\Alb_X^1)\) be the sections obtained by composing with the natural pushforward maps
    \[ \pi_1(U)^{(\text{ab})}\to \pi_1(X)^{(\text{ab})}, \qquad \pi_1(\Alb_U^1)\to \pi_1(\Alb_X^1). \]
    By the same argument as in the proof of \cref{thm:genmain}, \(s\) is orthogonal to \(\Pic^0\), and hence uniformly orthogonal to \(\Pic^0\) by \cref{prop:uniform ortho}. By \cref{lem:Picard_vaishing}, the choice of \(U\), and \cref{lem:albanese lifting} (2), the section lifts to a section \(s_V:G_K\to \pi_1(V)^{(\text{ab})}\) for every smaller open subset \(V\subseteq U\). By compactness, these lifts may be chosen compatibly. Passing to the inverse limit over all such \(V\), we obtain a section \(s_{K(X)}:G_K\to G_{K(X)}^{(\text{ab})}\).
    
    At this point, the implication follows from \cref{thm:EW}. To keep our proof independent of \cite{Wit08, EW10}, we now give a direct argument. By \cref{cor:comp}, \(s^\text{a}\) is geometric. Let \(x\in \Alb_X^1(K)\) be the point corresponding to \(s^\text{a}\). Let \(V\subseteq X\) be any open subset. Then there exists a torus \(T_V\) such that \(f:\Alb_V^1\to \Alb_X^1 \) is a \(T_V\)-torsor. Let \(P\) be the fiber of \(f\) over \(x\), which is a \(T_V\)-torsor over \(K\). Then \(f\) induces an exact sequence
    \[ 1\to \pi_1(T_{V,\widebar{K}})\to \pi_1(\Alb^1_V)\to \pi_1(\Alb_X^1)\to 1. \]
    Pulling back this exact sequence along \(s^{\text{a}}\), we obtain the fundamental exact sequence
    \begin{equation}\label{eqn:fundamental}
        1\to \pi_1(T_{V,\widebar{K}})\to \pi_1(P)\to G_K\to 1
    \end{equation}
    of \(P\). Since \(s^{\text{a}}\) lifts to \(s_{K(X)}\), it in particular lifts to \(\pi_1(\Alb^1_V)\). Therefore, \cref{eqn:fundamental} splits, and hence \(P\) is trivial by \cref{lem:torus}. In particular, \(P\) admits a \(K\)-rational point, so \(\Alb_V^1\) is a trivial torsor. Hence the generic period of \(X\) is \(1\).
\end{proof}

\section{The Albanese cokernel}\label{sec:albanese}

So far, we have studied when a section of \(\pi_1(X)^{(\ab)}\to G_K\) determines a \(K\)-rational point on \(\Alb_X^1\), using the Chern class obstructions arising from \(\Pic^0\) (cf. \cref{thm:main 2}). We now show that the remaining Chern class obstructions, coming from the N\'eron-Severi group, are related to whether the induced point on \(\Alb_X^1\) is represented by a \(0\)-cycle of degree \(1\). This relation is conditional on Kai's proposed higher-dimensional analogue of Lichtenbaum duality \cite{Kai16}.

We begin with the easy direction: a \(0\)-cycle of degree \(1\) canonically produces a section whose Chern class obstructions vanish on all of \(\Pic\).

\begin{prop}\label{prop: 0 cycle orthogonal}
    Let \(z\) be a \(0\)-cycle of degree \(1\) on a smooth proper variety \(X\). Then there exists a section \(s_z:G_K\to \pi_1(X)^{(\ab)}\), orthogonal to \(\Pic\), such that the corresponding section \(s_z^a:G_K\to \pi_1(\Alb_X^1)\) is geometric and given by \(z\in \Alb_X^1(K)\).
\end{prop}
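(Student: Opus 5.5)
Write \(z=\sum_i m_i x_i\) with closed points \(x_i\) of residue fields \(K_i=\kappa(x_i)\) and \(\sum_i m_i[K_i:K]=1\). The plan is to build \(s_z\) as a weighted sum of corestricted point sections, using that conjugacy classes of sections of \(\pi_1(X)^{(\ab)}\to G_K\) form a torsor under \(\HH^1(K,\pi_1(X_{\widebar K})^{\ab})\). Fix a reference section \(s_0\); if none is available a priori, I first produce one below. Each closed point \(x_i\) gives a geometric section \(s_{x_i}:G_{K_i}\to\pi_1(X)^{(\ab)}\) over \(K_i\). Since \(s_0|_{G_{K_i}}\) is also a section over \(K_i\), the difference is a class \([s_{x_i}-s_0|_{G_{K_i}}]\in\HH^1(K_i,\pi_1(X_{\widebar K})^{\ab})\). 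Set
\[ c:=\sum_i m_i\,\mathrm{Cor}_{K_i/K}[s_{x_i}-s_0|_{G_{K_i}}]\in\HH^1(K,\pi_1(X_{\widebar K})^{\ab}),\qquad s_z:=s_0+c. \]
To avoid choosing \(s_0\) at all, I would work directly with the extension class: the class of \(\pi_1(X)^{(\ab)}\) in \(\HH^2(K,\pi_1(X_{\widebar K})^{\ab})\) dies on restriction to each \(G_{K_i}\). Since \(\mathrm{Cor}\circ\mathrm{Res}\) is multiplication by \([K_i:K]\) and \(\sum_i m_i[K_i:K]=1\), the class itself vanishes, so a section exists. This is the standard transfer argument, and I would make the corestricted torsor construction explicit in it.

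Next I check orthogonality to \(\Pic\). Take \(L\in\Pic(X)\) and \(n\) with \(c_{1,n}(L_{\widebar K})=0\). By \cref{thm:Chern vs Weil},
\[ s_z^\ast c_1(L)=s_0^\ast c_1(L)+e(\delta L\cup c). \]
The projection formula \(e(\delta L\cup\mathrm{Cor}\,\beta)=\mathrm{Cor}\,e(\mathrm{Res}\,\delta L\cup\beta)\), together with \cref{thm:Chern vs Weil} applied over \(K_i\), gives
\[ e(\delta L\cup c)=\sum_i m_i\,\mathrm{Cor}_{K_i/K}\bigl(s_{x_i}^\ast c_1(L)-\mathrm{Res}\,s_0^\ast c_1(L)\bigr). \]
Now \(s_{x_i}^\ast c_1(L)=c_1(L|_{x_i})=0\), because the point section is geometric. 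Also \(\sum_i m_i\,\mathrm{Cor}\,\mathrm{Res}=\sum_i m_i[K_i:K]=1\). Hence \(s_z^\ast c_1(L)=s_0^\ast c_1(L)-s_0^\ast c_1(L)=0\). The key compatibility needed here is that restriction of sections commutes with Chern class pullback and that \(\delta L\) is compatible with restriction, which is immediate from the definitions.

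For the Albanese statement, I push everything forward along \(a_\ast\). The construction of \(s_z\) is functorial for \(a_\ast\), so \(s_z^a=s_0^a+a_\ast c\). Here \(a_\ast c=\sum_i m_i\,\mathrm{Cor}[s_{a(x_i)}-s_0^a]\) in \(\HH^1(K,\T\Alb_X^0)\). When \(\Alb_X^1(K)\neq\emptyset\) I take \(s_0^a\) geometric at a point \(y\). Then \([s_{a(x_i)}-s_y]\) is the Kummer image of \(a(x_i)-y\in\Alb_X^0(K_i)\), and Kummer maps commute with corestriction and the trace. So \(a_\ast c\) is the Kummer image of \(\sum_i m_i\,\mathrm{Tr}_{K_i/K}(a(x_i)-y)=a(z)-y\), where \(a(z)\in\Alb_X^1(K)\) is the point defined by \(z\); hence \(s_z^a\) is the geometric section at \(a(z)\). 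In general \(a(z)\) is itself a \(K\)-point, so \(\Alb_X^1(K)\neq\emptyset\) always holds, since \(z\) exists. I expect the main obstacle to be the bookkeeping with conjugacy classes versus honest cocycles when corestricting differences of sections. I would handle it by fixing \(s_0\) geometric at \(a(z)\)'s preimage level only on the Albanese side, and on \(X\) using the torsor structure on conjugacy classes, where corestriction is well defined.
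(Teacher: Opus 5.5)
Your strategy matches the paper's proof: a transfer argument for a reference section, \(s_z\) defined as \(s_0\) plus a weighted sum of corestricted differences of point sections, the projection formula together with \cref{thm:Chern vs Weil} for orthogonality, and compatibility of Kummer maps with trace and corestriction for the Albanese point. However, one identity you rely on is false in the generality of the statement, namely \(\operatorname{Cor}_{K_i/K}\circ\operatorname{Res}=[K_i:K]\).

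The field \(K\) is arbitrary and \(\widebar K\) is a separable closure, so \(K_i=\kappa(x_i)\) may be inseparable over \(K\). In that case \(G_{K_i}\) is identified with \(G_{K_i'}\), where \(K_i'\) is the maximal separable subextension, and \(G_{K_i'}\) has index \([K_i:K]_s\) in \(G_K\). So \(\operatorname{Cor}\circ\operatorname{Res}\) is multiplication by \([K_i:K]_s\). Carrying out your computation correctly then gives
\[ s_z^\ast c_1(L)=\Bigl(1-\sum_i m_i[K_i:K]_s\Bigr)\,s_0^\ast c_1(L), \]
so the argument only proves vanishing when \(\sum_i m_i[K_i:K]_s=1\). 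This can fail: for \(z=x_2-x_1\) with \(K_1/K\) purely inseparable of degree \(p\) and \(K_2/K\) separable of degree \(p+1\), the factor is \(1-p\). For the same reason your \(s_z\) depends on \(s_0\). On the Albanese side, the Kummer image of \(\operatorname{Tr}_{K_i/K}(P)\) is \(I_i\cdot\operatorname{Cor}_{K_i'/K}\) of the Kummer image of \(P\), where \(I_i\) is the inseparable degree of \(K_i/K\). Hence your \(s_z^a\) is not the section attached to \(a(z)\). (For the existence of \(s_0\) the slip is harmless, since \([K_i:K]_s\) divides \([K_i:K]\).) The repair is the paper's normalization
\[ s_z:=s_0+\sum_i m_i\,I_i\,\operatorname{Cor}_{K_i'/K}\bigl[s_{x_i}-s_0|_{G_{K_i'}}\bigr], \]
which works because \(\sum_i m_iI_i[K_i':K]=\sum_i m_i[K_i:K]=1\). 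In characteristic \(0\), in particular over the \(p\)-adic fields where the proposition is applied, your version is already correct.

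There is a smaller gap in your last paragraph. You cannot simply take \(s_0^a\) to be geometric, because \(s_0^a=a_\ast s_0\) is determined by \(s_0\), and you do not know a priori that a geometric section of \(\Alb_X^1\) lifts to \(\pi_1(X)^{(\ab)}\). What makes the step work is that the construction is independent of the reference section. Replacing \(s_0\) by \(s_0+d\) changes the result by \(\bigl(1-\sum_i m_iI_i[K_i':K]\bigr)d=0\). Pushing the construction forward along \(a_\ast\) gives the same construction on \(\Alb_X^1\) with reference \(a_\ast s_0\). By independence you may replace that reference by the geometric section \(s_y\), and the Kummer–trace compatibility then identifies \(s_z^a\) with the section of \(a(z)\). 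The paper records this independence explicitly; you should too.
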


\begin{proof}
    First, we prove the existence of a section of \(\pi_1(X)^{(\ab)}\to G_K\). Write \(z=\sum_i m_i[p_i]\), where \(m_i\in \Z\), and let \(K_i:=K(p_i)\). The exact sequence
    \[1\to \pi_1(X_{\overline{K}})^{\ab}\to \pi_1(X)^{(\ab)}\to G_K\to 1\]
    defines a class \(c\in \HH^2(K,\pi_1(X_{\overline{K}})^{\ab})\). Since \(p_i\) is a \(K_i\)-rational point, \(c|_{G_{K_i}}\) vanishes, and in particular \(c\) is annihilated by \([K_i:K]\). Since \(z=\sum_i m_i[p_i]\) has degree \(1\), we have \(\sum_i m_i[K_i:K]=1\). Therefore, \(c=0\), and hence there exists a section. Let \(s_O\) be such a section.

    Now we define \(s_z:G_K\to \pi_1(X)^{(\ab)}\). For each \(p_i\), let \(s_{p_i}:G_{K_i}\to \pi_1(X_{K_i})^{(\ab)}\) be the corresponding section and consider the class
    \[[s_{p_i}-s_O|_{G_{K_i'}}]\in \HH^1(K_i',\pi_1(X_{\overline{K}})^{\ab}).\]
    where \(K_i'/K\) is the maximal separable subextension of \(K_i/K\). We may use \(K_i'\) instead of \(K_i\) since \(G_{K_i}\simeq G_{K_i'}\) and \(\pi_1(X_{K_i})^{(\ab)}\simeq \pi_1(X_{K_i'})^{(\ab)}\). Define
    \[s_z:=s_O+\sum_i m_i\cdot I_i\cdot \operatorname{cor}_{K_i'/K}[s_{p_i}-s_O|_{G_{K_i'}}],\]
    where \(I_i\) is the inseparable degree of \(K_i/K\). This is independent of the choice of \(s_O\), since
    \[1-\sum_i m_i\cdot I_i\cdot [K_i:K]_s=1-\sum_i m_i\cdot [K_i:K]=0.\]
    Moreover, for any abelian variety, the natural map \(A(K)\to \HH^1(K,\pi_1(A_{\overline{K}}))\) is a group homomorphism, and the trace map on the left-hand side commutes with corestriction on the right-hand side. Therefore, \(s_z^a\) is associated with the point \(z\in \Alb_X^1(K)\).

    Finally, we prove that \(s_z\) is orthogonal to \(\Pic\). Fix \(n\) prime to char \(K\) and \(L\) such that \(c_{1,n}(L_{\overline{K}})=0\). We use the pairing
    \[e(-\cup -):\HH^1(K,\Pic(X_{\bar{K}})[n])\times \HH^1(K,\pi_1(X_{\bar{K}})^{\ab}/n)\to \HH^2(K,\mu_n).\]
    By the standard compatibility of cup products with restriction and corestriction and \cref{thm:Chern vs Weil},
    \begin{align*}
        e(\delta L\cup \operatorname{cor}_{K_i'/K}[s_{p_i}-s_O|_{G_{K_i'}}])&=\operatorname{cor}_{K_i'/K}\left(e(\operatorname{res}_{K_i'/K}(\delta L)\cup [s_{p_i}-s_O|_{G_{K_i'}}])\right)\\
        &=-\operatorname{cor}_{K_i'/K}\left(s_O|_{G_{K_i'}}^\ast c_1(L)\right)=-[K_i:K]_s s_O^\ast c_1(L).
    \end{align*}
    Hence
    \begin{align*}
       s_z^\ast c_1(L)-s_O^\ast c_1(L)&=e(\delta L\cup [s_z-s_O])=\sum_i m_i\cdot I_i\cdot e(\delta L\cup \operatorname{cor}_{K_i'/K}[s_{p_i}-s_O|_{G_{K_i'}}])\\
       &=-\sum_i m_i\cdot I_i\cdot[K_i':K]_s s_O^\ast c_1(L)=-s_O^\ast c_1(L).
    \end{align*}
    Thus \(s_z^\ast c_1(L)=0\). Therefore, \(s_z\) is orthogonal to \(\Pic\).
\end{proof}

The converse is essentially a statement about the Albanese cokernel, i.e. the map \( \operatorname{CH}_0(X)^0\to \Alb_X^0(K). \) If \(X\) is a smooth proper curve and \(K\) is a \(p\)-adic field, then this map is \(\Pic^0(X)\to \Pic_X^0(K).\) By Lichtenbaum duality and Tate duality \cite{Tat63,Lic69}, this map is dual to
\[\HH^1(K,\Pic_X^0)\to \HH^1(K,\Pic_X),\]
whose kernel is the cokernel of the map
\[\Pic(X_{\overline{K}})^{G_K}\to \Z=\operatorname{NS}(X_{\overline{K}})^{G_K}.\]
Altogether, the cokernel of \(\Pic^0(X)\to \Pic_X^0(K)\) is Pontryagin dual to \(\operatorname{coker}(\Pic(X_{\overline{K}})^{G_K}\to \operatorname{NS}(X_{\overline{K}})^{G_K})\). Kai proposed the following statement, analogous to the preceding duality, as a higher-dimensional generalization of Lichtenbaum duality.
\[\textup{(K)}\ \operatorname{coker}\left(\operatorname{CH}_0(X)^0\to\Alb_X^0(K)\right)\text{ is Pontryagin dual to }\operatorname{coker}\left(\Pic(X_{\overline{K}})^{G_K}\to\operatorname{NS}(X_{\overline{K}})^{G_K}\right).\]
Here is an equivalent formulation of \textup{(K)}. Consider the local Tate duality pairing \[\Alb_X^0(K)\times \HH^1(K,\Pic_X^0)\longrightarrow \Br(K).\] The exact sequence relating \(\Pic\) and \(\operatorname{NS}\) gives a connecting map \(\operatorname{NS}(X_{\overline K})^{G_K}\longrightarrow \HH^1(K,\Pic_X^0),\) whose image is naturally identified with \(\operatorname{coker}\left(\Pic(X_{\overline K})^{G_K}\longrightarrow \operatorname{NS}(X_{\overline K})^{G_K}\right).\) Thus, \textup{(K)} is equivalent to the assertion that a point of \(\Alb_X^0(K)\) is represented by a \(0\)-cycle if and only if it pairs trivially with the image of \(\operatorname{NS}(X_{\overline K})^{G_K}\) in \(\HH^1(K,\Pic_X^0)\). By \cite[Proposition 3.4 and Equation (6)]{Kai16}, every point represented by a \(0\)-cycle is orthogonal to this image. The substantive content of \textup{(K)} is therefore the converse: a point of \(\Alb_X^0(K)\) that is orthogonal to the image of \(\operatorname{NS}(X_{\overline K})^{G_K}\) must arise from a \(0\)-cycle. 

As we will see, this formulation is exactly the one compatible with the Chern class obstruction: \cref{thm:Chern vs Weil} will identify the above Néron-Severi pairing with the Chern class obstruction of a suitable line bundle. This gives the following main theorem of this section, which is \cref{thm:albanese coker}.

\begin{thm}\label{thm:Kai and Sec}
    Assume that \(K\) is a \(p\)-adic field and \(X\) is a smooth projective variety of index \(1\). Then \textup{(K)} is equivalent to the following statement: the set of points of \(\Alb_X^1(K)\) represented by \(0\)-cycles of degree \(1\) is exactly the set of points of \(\Alb_X^1(K)\) associated with sections of \(\pi_1(X)^{(\ab)}\to G_K\) orthogonal to \(\Pic\).
\end{thm}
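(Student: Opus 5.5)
Since \(X\) has index \(1\), fix a \(0\)-cycle \(z_0\) of degree \(1\). By \cref{prop: 0 cycle orthogonal} it gives a section \(s_{z_0}\) orthogonal to \(\Pic\), with \(s_{z_0}^a\) geometric and corresponding to \(z_0\in\Alb_X^1(K)\). Using \(z_0\) as base point, I identify \(\Alb_X^1(K)\) with \(\Alb_X^0(K)\) via \(x\mapsto x-z_0\). Points represented by degree-\(1\) \(0\)-cycles then correspond exactly to the image of \(\operatorname{CH}_0(X)^0\to\Alb_X^0(K)\). By the reformulation preceding the theorem, \textup{(K)} is equivalent to saying that this image equals the orthogonal complement of \(\partial(\operatorname{NS}(X_{\overline K})^{G_K})\subseteq\HH^1(K,\Pic_X^0)\) under the local Tate pairing. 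Since the inclusion ``image \(\subseteq\) orthogonal'' always holds by \cite{Kai16}, it suffices to prove the following. \emph{A point \(x\in\Alb_X^1(K)\) comes from a section orthogonal to \(\Pic\) if and only if \(x-z_0\) is orthogonal to \(\partial(\operatorname{NS}(X_{\overline K})^{G_K})\).} With this in hand, both statements say the same set equals the same complement, so they are equivalent.

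For the forward implication, let \(s\) be orthogonal to \(\Pic\). By \cref{thm:main 2}, \(s^a\) is geometric; let \(x\) be the corresponding point. Take \(\lambda\in\operatorname{NS}(X_{\overline K})^{G_K}\). If \(\partial\lambda\) is \(n\)-torsion, choose a lift to \(\Pic(X_{\overline K})\), multiply by \(n\), and correct by an element of \(\Pic^0\), so as to obtain \(L\in\Pic(X_{\overline K})^{G_K}\) with \(c_{1,n}(L_{\overline K})=0\). Since \(\Br(X/K)=0\) by \cref{thm:relative brauer} (index \(1\) also gives this), \(L\in\Pic(X)\). Writing \(L=M^{\otimes n}\) geometrically, \(\delta L\) is the cocycle \(\sigma\mapsto M^\sigma\otimes M^{-1}\). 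I will check that its image in \(\HH^1(K,\Pic_X^0)\), via \(\Pic(X_{\overline K})[n]\to\Pic^0_X[n]\), is the image of \(\partial\lambda\) under the Kummer-type connecting map. This is a diagram chase with \(0\to\Pic^0\to\Pic\to\operatorname{NS}\to0\) and multiplication by \(n\); the torsion of \(\operatorname{NS}\) needs care, and I would first reduce to the part of \(\Pic(X_{\overline K})[n]\) coming from \(\Pic^0\), or restrict to \(n\) and \(\lambda\) for which this works. \Cref{thm:Chern vs Weil} then gives
\[0=(s^\ast-s_{z_0}^\ast)c_1(L)=e(\delta L\cup[s-s_{z_0}]).\]
The class \([s-s_{z_0}]\) projects in \(\HH^1(K,\pi_1(\Alb_{X,\overline K}^0)/n)\) to the Kummer image of \(x-z_0\). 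Compatibility of the Weil pairing with the pairing \(\Alb_X^0(K)/n\times\HH^1(K,\Pic_X^0)[n]\to\Br(K)\) then yields \(\langle x-z_0,\partial\lambda\rangle=0\).

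For the converse, suppose \(x-z_0\) is orthogonal to \(\partial\operatorname{NS}^{G_K}\). Let \(s_1\) be the section \(s_{z_0}+\kappa(x-z_0)\), where \(\kappa\) is a lift of the Kummer class of \(x-z_0\) to \(\HH^1(K,\pi_1(X_{\overline K})^{\ab})\). Such a lift exists because \(\pi_1(X_{\overline K})^{\ab}\to\T\Alb^0_X\) is surjective with finite kernel, and I may have to adjust by torsion. The computation above shows \(s_1^\ast c_1(L)=0\) for the \(L\) built from \(\operatorname{NS}\). It remains to kill the residual obstruction, which lives in the finite torsion part \(\HH^1(K,\operatorname{NS}_{\rm tors}\text{-dual})\). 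I would do this by the correction argument of \cref{prop: Brauer converse}, now restricted to classes in the kernel of \(\HH^1(K,\pi_1(X_{\overline K})^{\ab})\to\HH^1(K,\T\Alb_X^0)\), so that \(s^a\) stays at \(x\). This requires the residual character to vanish on the annihilator of that kernel, which should follow from perfectness of local duality for the finite module \(\pi_1(X_{\overline K})^{\ab}_{\rm tors}\). I expect this step, handling the torsion of \(\operatorname{NS}\) and of \(\pi_1^{\ab}\) and moving the section without changing its Albanese point, to be the main obstacle.
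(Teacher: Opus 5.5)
Your reduction is the paper's. Fixing \(z_0\), the theorem amounts to the unconditional equality ``points from sections orthogonal to \(\Pic\)'' \(=\) ``points \(x\) with \(x-z_0\perp\partial(\operatorname{NS}(X_{\overline K})^{G_K})\)''. Your forward inclusion is essentially Steps 2--3 of the paper. The fix you are looking for there is to choose the \(n\)-th root \(M\) of \(L\) to be a lift of \(\lambda\), which is possible since \(\Pic^0(X_{\overline K})\) is divisible. Then \(\sigma\mapsto M^\sigma\otimes M^{-1}\) is a cocycle in \(\Pic^0_X[n]\) mapping both to \(\delta L\) and to \(\partial\lambda\).

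\textbf{The gap: existence of the lift \(\kappa\) in the converse.} Surjectivity of \(\pi_1(X_{\overline K})^{\ab}\to\T\Alb_X^0\) with finite kernel \(N\) only gives an exact sequence
\[\HH^1(K,\pi_1(X_{\overline K})^{\ab})\to\HH^1(K,\T\Alb_X^0)\xrightarrow{\rho}\HH^2(K,N).\]
By local duality, \(\HH^2(K,N)\) is dual to \(\operatorname{NS}(X_{\overline K})^{G_K}_{\mathrm{tors}}\), which is nonzero as soon as \(\operatorname{NS}(X_{\overline K})_{\mathrm{tors}}\) has Galois-invariant elements. Sections of \(\pi_1(X)^{(\ab)}\) form a torsor under \(\HH^1(K,\pi_1(X_{\overline K})^{\ab})\). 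Hence \(s^a_x\) lifts if and only if \(\rho(\kappa(x-z_0))=0\). ``Adjusting by torsion'' in \(\HH^1(K,\T\Alb_X^0)\) moves the Albanese point, so it cannot help. What is missing is the paper's Step 4: pair \(\rho(\kappa(x-z_0))\) with \(\lambda\in\operatorname{NS}(X_{\overline K})^{G_K}_{\mathrm{tors}}\). Via compatibility of the dualities and the same cocycle as in your forward direction, this pairing equals \(\langle x-z_0,\partial\lambda\rangle\), which vanishes by hypothesis. So the orthogonality hypothesis is already needed to produce \emph{some} lift, not only to correct it.

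\textbf{The correction step is misattributed.} You say the residual character vanishes on the annihilator of \(\operatorname{Im}(\HH^1(K,N)\to\HH^1(K,\pi_1(X_{\overline K})^{\ab}))\) by perfectness of local duality. That is not where this vanishing comes from. This annihilator consists of the classes \(\delta L\in F^1_{\Pic}\) that come from \(\HH^1(K,\Pic^0(X_{\overline K})_{\mathrm{tors}})\). The vanishing of \(s_1^\ast c_1(L)\) on them must be deduced from the hypothesis via your forward formula. For that you need every such \(L\) to be of \(\operatorname{NS}\)-type, which is not immediate at level \(n\). The paper handles it as follows:
\begin{enumerate}
\item It picks a torsion \(N'\) with \((M\otimes N')^\sigma\otimes(M\otimes N')^{-1}\in\Pic^0(X_{\overline K})_{\mathrm{tors}}\), so that \([M\otimes N']\in\operatorname{NS}(X_{\overline K})^{G_K}\).
\item It works at level \(r=kn\) with \(L^{\otimes k}\).
\item It descends using injectivity of \(\HH^2(K,\mu_n)\to\HH^2(K,\mu_r)\).
\end{enumerate}
Only after this well-definedness does duality enter. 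The character then factors through the image of \(F^1_{\Pic}\) in \(\HH^1(K,\operatorname{NS}(X_{\overline K})_{\mathrm{tors}})\) and extends by injectivity of \(\Q/\Z\). Perfectness of \(\HH^1(K,\operatorname{NS}(X_{\overline K})_{\mathrm{tors}})\times\HH^1(K,N)\to\Br(K)\) then supplies the correcting class \(\gamma\in\HH^1(K,N)\), and \(s_1-\gamma\) is orthogonal to \(\Pic\) with the same Albanese point.
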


\begin{proof}
    The argument has two main ingredients. First, we identify the Néron-Severi obstruction appearing in (K) with the Chern class obstruction of a section; this is the content of Steps 1 and 2. The forward implication then follows immediately from orthogonality to \(\Pic\) (Step 3). Conversely, starting from an Albanese point annihilating the Néron--Severi obstruction, we first lift its Albanese section to \(\pi_1(X)^{(\mathrm{ab})}\) and then modify the lift so that it becomes orthogonal to \(\Pic\) (Steps 4 and 5).

    \textbf{Step 1. A diagram. }The proof uses the following diagram:

    \[
    \begin{tikzcd}[row sep=2.2em, column sep=0.55em]
    \HH^1(K,\Pic_X[n]) & \times & \HH^1(K,\pi_1(X_{\overline K})^{\mathrm{ab}}/n) \arrow[d] \arrow[drr] & & \\
    \HH^1(K,\Pic^0_X[n]) \arrow[u] \arrow[d,equal] & \times & \HH^1(K,\pi_1(\Alb^1_{X,\overline K})/n) \arrow[d,phantom,"\simeq" {description,rotate=90}] \arrow[rr] & \phantom{\qquad} & |[yshift=-1.5em,overlay]| \Br(K) \\
    \HH^1(K,\Pic^0_X[n]) \arrow[d] & \times & \HH^1(K,\pi_1(\Alb^0_{X,\overline K})/n) \arrow[urr] & & \\
    \HH^1(K,\Pic_X^0) & \times & \Alb_X^0(K) \arrow[u] \arrow[uurr] & &
    \end{tikzcd}
    \]
    
    Since \(X\) has index \(1\), we obtain an isomorphism \(\Alb_X^0\simeq \Alb_X^1\) after choosing a \(K\)-point \(z\in \Alb_X^1(K)\), which induces \(\HH^1(K,\pi_1(\Alb^0_{X,\overline K})/n)\simeq \HH^1(K,\pi_1(\Alb^1_{X,\overline K})/n)\). This identification does not depend on the choice of the \(K\)-point, since we are considering the abelianized fundamental group. The commutativity of the upper part follows from the functoriality of local Tate duality applied to \(X\to \Alb_X^1\), while the commutativity of the lower part is a standard consequence of local Tate duality.

    \textbf{Step 2. Setup and a formula. } The purpose of this step is to identify the Néron–Severi obstruction of \(y-z\) with the Chern class obstruction of \(s\); concretely, we will prove \cref{eq: the formula}. This formula is the core of the proof.
    
Now assume that \(z\) is represented by a \(0\)-cycle of degree \(1\), and let \(s\) be a section whose associated section \(s^a\) is geometric. Let \(y\in\Alb_X^1(K)\) be the point corresponding to \(s^a\). Then, under the isomorphism \(\Alb_X^0\simeq \Alb_X^1\) induced by \(z\), it also defines a section, which we denote by \(s^a_{-z}\), of \(\pi_1(\Alb_X^0)\to G_K\). Let \(s_O\) be the section of \(\pi_1(\Alb_X^0)\to G_K\) corresponding to the identity. Then, under the isomorphism \(\HH^1(K,\pi_1(\Alb^0_{X,\overline K})/n)\simeq \HH^1(K,\pi_1(\Alb^1_{X,\overline K})/n)\), the class \([s^a-s_z^a]\) maps to \([s^a_{-z}-s_O]\). Moreover, \(y-z\) maps to \([s^a_{-z}-s_O]\) under the map \(\Alb_X^0(K)\to \HH^1(K,\pi_1(\Alb^0_{X,\overline K})/n)\).

    Let \(L\in \operatorname{NS}(X_{\overline{K}})^{G_K}\). Since \(\delta_1 L\in \HH^1(K,\Pic_X^0)\) for the connecting map \(\delta_1:\operatorname{NS}(X_{\overline{K}})^{G_K}\to \HH^1(K,\Pic_X^0)\) and this group is torsion, \(L^{\otimes n}\) is represented by some \(M_1\in \Pic(X_{\overline{K}})^{G_K}\). Since \(X\) has index \(1\), the relative Brauer group vanishes, and hence \(\Pic(X)=\Pic(X_{\overline{K}})^{G_K}\) by \cite[Section~2]{Sti10}. Thus \(M_1\in \Pic(X)\). Now let \(M_0\in \Pic(X_{\overline{K}})\) be any lift of \(L\). Then \(M_0^{\otimes n}\otimes M_1^{-1}\in \Pic^0(X_{\overline{K}})\), so there exists \(M_2\in \Pic^0(X_{\overline{K}})\) such that
    \[M_1=(M_0\otimes M_2)^{\otimes n}.\]
    By definition, \(\delta_1 L(\sigma)=M_0^{\sigma}\otimes M_0^{-1}\). Define a cocycle \(c\in H^1(G_K,\Pic_X^0[n])\) by
    \[c(\sigma)=N^{\sigma}\otimes N^{-1},\]
    where \(N:=M_0\otimes M_2\). Note that \(N^{\sigma}\otimes N^{-1}\in \Pic_X[n]\), since \(N^{\otimes n}=M_1\in \Pic(X)\), and \(N^{\sigma}\otimes N^{-1}\in \Pic^0\), since \(N=M_0\otimes M_2\) maps to the Galois-invariant class \(L\). Moreover, under the map \(H^1(G_K,\Pic_X^0[n])\to H^1(G_K,\Pic_X[n])\), the class \(c\) maps to \(\delta M_1\), where \(\delta M_1\) is defined in \cref{thm:Chern vs Weil}, while under the map \(H^1(G_K,\Pic_X^0[n])\to H^1(G_K,\Pic_X^0)\), it maps to \(\delta_1 L\), since \(M_2\in \Pic^0(X_{\overline{K}})\). 

    Now, by the diagram in Step 1, \cref{thm:Chern vs Weil} and \cref{prop: 0 cycle orthogonal},
    \begin{equation}\label{eq: the formula}
        e(\delta_1 L\cup (y-z))=e(c\cup [s^a_{-z}-s_O])=e(c\cup [s^a-s_z^a])=e(\delta M_1\cup [s-s_z])=s^\ast c_1(M_1).
    \end{equation}    

    \textbf{Step 3. Forward direction. } Now assume \textup{(K)}. By \cref{prop: 0 cycle orthogonal}, every point of \(\Alb_X^1(K)\) represented by a \(0\)-cycle of degree \(1\) corresponds to a section orthogonal to \(\Pic\). Hence, it remains to prove the converse: if \(s:G_K\to \pi_1(X)^{(\ab)}\) is a section orthogonal to \(\Pic\), then the associated \(K\)-rational point of \(\Alb_X^1\) is represented by a \(0\)-cycle of degree \(1\).

    Under the assumption of \textup{(K)} and with the notation of Step 2, it is enough to prove that the pairing of \(y-z\) with \(\operatorname{NS}(X_{\overline{K}})^{G_K}\) is trivial. By \cref{eq: the formula},
    \[e(\delta_1L\cup(y-z))=s^\ast c_1(M_1)=0,\]
    and hence \(y\) is represented by a \(0\)-cycle of degree \(1\).

    \textbf{Step 4. Converse direction: lifting. } Now, let \(y\in \Alb_X^1(K)\) be a point such that its pairing with \(\operatorname{NS}(X_{\overline{K}})^{G_K}\) is trivial. We first prove that \(y\) is associated with a section \(s:G_K\to \pi_1(X)^{(\ab)}\). Let \(s_y^a:G_K\to \pi_1(\Alb_X^1)^{(\ab)}\) be the section corresponding to \(y\). We need to lift it to \(\pi_1(X)^{(\ab)}\). Note that, as in the discussion after \cref{prop: Brauer converse}, we have a Pontryagin duality between
    \[0\to \Pic^0(X_{\overline{K}})_{\mathrm{tors}}\to \Pic(X_{\overline{K}})_{\mathrm{tors}}\to \operatorname{NS}(X_{\overline{K}})_{\mathrm{tors}}\to 0\]
    (the second map is surjective since \(\Pic^0(X_{\overline{K}})_{\mathrm{tors}}\) is divisible) and
    \[0\to N\to \pi_1(X_{\overline{K}})^{\ab}\to \pi_1(\Alb_{X,\overline{K}}^1)^{\ab}\to 0,\]
    where \(N\) is defined as the kernel of the second map. In particular, this gives an exact sequence
    \[\HH^1(G_K,\pi_1(X_{\overline{K}})^{\ab})\to \HH^1(K,\pi_1(\Alb_{X,\overline{K}}^1)^{\ab})\xrightarrow{\rho}\HH^2(K,N).\]
    Hence, to obtain the lifting, it is enough to prove that \(\rho([s_y^a-s_z^a])=0\). By local Tate duality, this is equivalent to \(e(L\cup \rho[s_y^a-s_z^a])=0\) for every \(L\in \operatorname{NS}(X_{\overline{K}})^{G_K}_{\mathrm{tors}}\). Consider the following diagram
    \[
    \begin{tikzcd}[row sep=3em, column sep=0.55em]
    \operatorname{NS}(X_{\overline K})^{G_K}_{\mathrm{tors}} \arrow[d] & \times & \HH^2(K,N) \arrow[rr] & \phantom{\qquad} & |[yshift=-1.7em,overlay]| \Br(K) \\
    \HH^1(K,\Pic^0(X_{\overline K})_{\mathrm{tors}}) & \times & \HH^1(K,\pi_1(\Alb^1_{X,\overline K})^{\mathrm{ab}}) \arrow[u,"\rho"'] \arrow[urr] & &
    \end{tikzcd}
    \]
    We use the same notation as in Step 2. Then
    \[e(L\cup \rho[s_y^a-s_z^a])=e(\delta M_1\cup [s_y^a-s_z^a])=e(\delta_1 L\cup [y-z])=0\]
    by the assumption on \(y\). Therefore, \(\rho([s_y^a-s_z^a])=0\), and \(s_y^a\) lifts to \(s_y:G_K\to \pi_1(X)^{(\ab)}\).

    \textbf{Step 5. Converse direction: orthogonality. } We now modify an arbitrary lift by a class in \(H^1(K,N)\) so as to kill all Chern class obstructions; local Tate duality identifies the required correction as in \cref{prop: Brauer converse}. Choose a lift \(s_y\) of \(s_y^a\). We will find another lift \(s_y'\) that is orthogonal to \(\Pic\). If such a lift exists, then by the assumption, \(y\) is represented by a \(0\)-cycle of degree \(1\), and the proof is complete.

    Recall the notation \(F^1\Pic\subseteq \HH^1(K,\Pic(X_{\overline{K}})_{\mathrm{tors}})\) from the proof of \cref{prop: Brauer converse}. Again, we are in the same situation since the relative Brauer group vanishes. It is the subgroup consisting of classes \(c\) such that there exist \(n\ge 1\) and \(L\in \Pic(X)\) with \(c_{1,n}(L_{\overline{K}})=0\), and \(c\) is the image of \(\delta L\) under the map \(\HH^1(K,\Pic(X_{\overline{K}})[n])\to\HH^1(K,\Pic(X_{\overline{K}})_{\mathrm{tors}})\), where \(\delta\) is defined as in \cref{prop:chern class} as follows: choose \(M\in \Pic(X_{\overline{K}})\) such that \(M^{\otimes n}=L\). Then \(\delta L\) is defined by \(\sigma\mapsto M^{\sigma}\otimes M^{-1}\). 
    
    We will prove that \(s_y^\ast\) defines a map from \(\operatorname{Im}(F^1\Pic\to \HH^1(K,\operatorname{NS}(X_{\overline{K}})_{\mathrm{tors}}))\) to \(\Br(K)\). The only issue is well-definedness, and it is enough to prove the following: if \(\delta L\in F^1\Pic\) lies in the image of \(\HH^1(K,\Pic^0(X_{\overline{K}})_{\mathrm{tors}})\), then \(s_y^\ast c_1(L)=0\). By this assumption, there exists \(N\in \Pic(X_{\overline{K}})_{\mathrm{tors}}\) such that 
    \[(MN)^{\sigma}\otimes (MN)^{-1}\in \Pic^0(X_{\overline{K}})_{\text{tors}}.\]
    Hence, we are in the same situation as in Step 2, i.e. \(L, M_0, M_1, M_2\) of Step 2 are roughly \([M\otimes N], M\otimes N, L, \mathcal{O}_X\) here. More precisely, we need to take some care with the choice of torsion level. Let \(r\) be a common multiple of \(n\) and the order of \(N\), and write \(r=kn\). Then \(N^{\otimes r}\simeq \mathcal O\) and \((M\otimes N)^{\otimes r}\simeq L^{\otimes k}.\) Moreover, since \((M\otimes N)^\sigma\otimes(M\otimes N)^{-1}\in \Pic^0(X_{\overline K})_{\text{tors}}\), the class \([M\otimes N]\in \text{NS}(X_{\overline K})\) is \(G_K\)-invariant. Applying \cref{eq: the formula} with \([M\otimes N]\) in place of \(L\) and \(L^{\otimes k}\) in place of \(M_1\), we obtain
\[s_y^\ast c_{1,r}(L^{\otimes k})=e(\delta_1[M\otimes N]\cup(y-z))=0.\]
On the other hand, by the naturality of the Kummer sequences with respect to the inclusion \(\mu_n\hookrightarrow\mu_r\), the image of \(s_y^\ast c_{1,n}(L)\) under \(H^2(K,\mu_n)\to H^2(K,\mu_r)\) is \(s_y^\ast c_{1,r}(L^{\otimes k})\). This map is injective, since under the natural embeddings into \(\Br(K)\) it is identified with the inclusion \(\Br(K)[n]\hookrightarrow\Br(K)[r]\). Therefore,
\[s_y^\ast c_{1,n}(L)=0.\]
    Hence the map from \(\operatorname{Im}(F^1\Pic\to \HH^1(K,\operatorname{NS}(X_{\overline{K}})_{\mathrm{tors}}))\) to \(\Br(K)\) is well defined. Since \(\Br(K)\) is injective, it extends to a map from \(\HH^1(K,\operatorname{NS}(X_{\overline{K}})_{\mathrm{tors}})\), which corresponds to an element \(r\in \HH^1(K,N)\). Therefore, \(s_y'=s_y-r\) gives the desired section, by the same argument as in the proof of \cref{prop: Brauer converse}.
    
\end{proof}

\begin{rmk}
    \begin{enumerate}
        \item The projectivity hypothesis is included because it is assumed in \cite{Kai16}.
        \item One subtlety is that, even if we assume the existence of a birational geometrically abelian section, i.e. a section of \(G_{K(X)}^{(\ab)}\to G_K\), we do not know in general whether the index of \(X\) is \(1\). Over a \(p\)-adic field, \cite[Remark~(2) of Section~2.2]{BCS08} gives examples of cubic surfaces with index \(>1\) and vanishing elementary obstruction. By \cite[Theorem~3.2]{EW10}, such varieties nevertheless admit birational geometrically abelian sections. Therefore, the additional assumption that the index is \(1\) is essential. However, as we will see below, this assumption is unnecessary for curves.
    \end{enumerate}
\end{rmk}

\begin{cor}\label{cor: NS 0}
    Assume moreover that \(\operatorname{NS}(X_{\overline{K}})\) is torsion-free. Then \textup{(K)} is equivalent to the following statement: a section \(G_K\to\pi_1(X)^{(\ab)}\) is orthogonal to \(\Pic\) if and only if the associated section of \(\Alb_X^1\) defines a point represented by a \(0\)-cycle of degree \(1\).
\end{cor}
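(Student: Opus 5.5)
The plan is to deduce \cref{cor: NS 0} from \cref{thm:Kai and Sec} by showing that the torsion-freeness of \(\operatorname{NS}(X_{\overline K})\) removes the lifting problem of Step 4. Throughout we keep the standing hypotheses of \cref{thm:Kai and Sec}: \(K\) is \(p\)-adic and \(X\) is smooth projective of index \(1\).

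\textbf{Step 1: the Albanese map is an isomorphism on abelianized groups.} By \cref{thm:albanese} (4) with \(U=X\), torsion-freeness of the Néron--Severi group gives an isomorphism \(\pi_1(X)^{(\ab)}\xrightarrow{\sim}\pi_1(\Alb_X^1)^{(\ab)}=\pi_1(\Alb_X^1)\). Equivalently, the group \(N=\ker(\pi_1(X_{\overline K})^{\ab}\to\pi_1(\Alb^1_{X,\overline K})^{\ab})\) from Step 4 of the proof of \cref{thm:Kai and Sec} is trivial. One can see this directly as well: \(N\) is Pontryagin dual to \(\operatorname{NS}(X_{\overline K})_{\mathrm{tors}}=0\). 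Consequently \(s\mapsto s^a\) is a bijection between sections of \(\pi_1(X)^{(\ab)}\to G_K\) and sections of \(\pi_1(\Alb_X^1)\to G_K\).

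\textbf{Step 2: assume (K).} By \cref{thm:Kai and Sec}, the points of \(\Alb_X^1(K)\) represented by \(0\)-cycles of degree \(1\) are exactly those associated with sections orthogonal to \(\Pic\). We prove the two directions of the stated equivalence.

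If \(s\) is orthogonal to \(\Pic\), then it is orthogonal to \(\Pic^0\). By \cref{cor:comp}, \(s^a\) is geometric, so it comes from some point \(y\in\Alb_X^1(K)\). That point is represented by a \(0\)-cycle of degree \(1\) by \cref{thm:Kai and Sec}.

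Conversely, suppose \(s^a\) is the section of a point \(y\) represented by a \(0\)-cycle of degree \(1\). By \cref{thm:Kai and Sec}, \(y\) is associated with some section \(s'\) orthogonal to \(\Pic\), so \(s'^a=s^a\). Injectivity from Step 1 then forces \(s=s'\), so \(s\) is orthogonal to \(\Pic\). This injectivity is exactly where the torsion-free hypothesis enters; without it one could only modify lifts by classes in \(\HH^1(K,N)\).

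\textbf{Step 3: the statement implies (K).} Let \(P_0\) be the set of points of \(\Alb_X^1(K)\) represented by \(0\)-cycles of degree \(1\), and \(P_\perp\) the set of points associated with sections orthogonal to \(\Pic\). We verify the criterion of \cref{thm:Kai and Sec}, namely \(P_0=P_\perp\).

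\(P_0\subseteq P_\perp\) holds unconditionally by \cref{prop: 0 cycle orthogonal}.

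For \(P_\perp\subseteq P_0\), let \(s\) be orthogonal to \(\Pic\). Then \(s^a\) is geometric by \cref{cor:comp}, and the statement gives that its point lies in \(P_0\).

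Hence \(P_0=P_\perp\), and \cref{thm:Kai and Sec} yields (K).

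The only step requiring care is Step 1. Specifically, we need that a section of \(\Alb_X^1\) lifts uniquely up to conjugacy, which means checking that the conjugacy-class bijection is compatible with the identification via \(a_\ast\). This follows from \(a_\ast\) being an isomorphism of extensions of \(G_K\).
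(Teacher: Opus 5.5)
Your proposal is correct and is essentially the paper's argument. The paper deduces the corollary directly from \cref{thm:Kai and Sec} and the isomorphism \(\pi_1(X)^{(\ab)}\simeq\pi_1(\Alb_X^1)\) of \cref{thm:albanese} (4), which makes \(s\mapsto s^{a}\) a bijection on conjugacy classes of sections; you spell this out, and you correctly observe that torsion-freeness is needed only for the implication from ``the point is represented by a \(0\)-cycle of degree \(1\)'' to orthogonality to \(\Pic\), which then holds even without \textup{(K)} by \cref{prop: 0 cycle orthogonal} and injectivity.
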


This is a direct consequence of \cref{thm:Kai and Sec} and \cref{thm:albanese} (4). Together with \cref{cor:period index} and \cref{lem:albanese lifting,lem:Picard_vaishing}, this yields the following corollary.

\begin{cor}\label{cor: Kai for curve}
    Let \(X\) be a smooth proper curve over a \(p\)-adic field \(K\) and let \(U\subseteq X\) be an open subset satisfying \textup{(VP1)}. If there exists a section \(s_U:G_K\to\pi_1(U)^{(\ab)}\), then \(X\) has index \(1\), and the induced point of \(\Alb_X^1\) is represented by a \(0\)-cycle of degree \(1\).
\end{cor}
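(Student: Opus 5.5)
The plan has three steps: restrict the section to \(X\), show \(X\) has index \(1\), and then apply \cref{thm:Kai and Sec}, for which (K) is known for curves.

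\textbf{Step 1: orthogonality.} Let \(s:G_K\to\pi_1(X)^{(\ab)}\) be the section induced by \(s_U\). As in the proof of \cref{cor:period index} (2), \(s\) is orthogonal to \(\Pic\). For \(L\in\Pic^0(X)\), \cref{lem:albanese lifting} (1) together with the density statement \cref{lem:Picard_vaishing} (1) gives \(s^\ast c_{1,n}(L)=0\). Indeed, \((\mathrm{VP1})\) implies \((\mathrm{VP0})\) for curves, because \(\Pic^0(U)\subseteq\Pic(U)\) and \(\widehat{\Pic(U)}=0\) forces every finite quotient of \(\Pic(U)\), hence of \(\Pic^0(U)\) by a degree argument, to vanish. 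For a general \(L\) with \(c_{1,n}(L_{\overline K})=0\), I subtract a combination of boundary divisors so that the difference \(L'\) lies in \(\Pic^0(X)\). This uses \cref{lem:Picard_vaishing} (2): \(\langle D\rangle\) is dense, so it surjects onto \(\Pic(X)/\Pic^0(X)\subseteq\Z\). Then \(s^\ast c_1(L)=s_U^\ast c_1(L|_U)=s_U^\ast c_1(L'|_U)=s^\ast c_1(L')=0\).

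\textbf{Step 2: index \(1\).} By \cref{cor:comp}, \(s^{\mathrm a}\) is geometric, so the period of \(X\) is \(1\). By \cref{thm:relative brauer} (characteristic \(0\)), \(\Br(X/K)=0\), hence index equals period. This is exactly the last assertion of \cref{cor:period index} (2), via \cite{Lic69}.

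\textbf{Step 3: the Albanese point.} Now \(X\) is a smooth projective curve of index \(1\), and (K) holds for curves: it is Lichtenbaum–Tate duality, as explained before \cref{thm:Kai and Sec}. Since \(\operatorname{NS}(X_{\overline K})=\Z\) is torsion-free, \cref{cor: NS 0} (or directly \cref{thm:Kai and Sec}) shows that a section orthogonal to \(\Pic\) has associated Albanese point represented by a \(0\)-cycle of degree \(1\). Applying this to \(s\) from Step 1 finishes the argument.

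The main point needing care is Step 1, specifically the reduction of general \(L\) to \(\Pic^0\). One must make sure that the degree map \(\Pic(X)\to\Z\) restricted to \(\langle D\rangle\) hits \(\deg L\) exactly, not merely densely. This holds because the image of \(\langle D\rangle\) in \(\Z\) is a subgroup, hence of the form \(d\Z\). Density in the profinite topology of \(\Pic(X)\) then pushes forward to density of \(d\Z\) in \(\Z\), and since \(\Z\) is residually finite this forces \(d=1\).
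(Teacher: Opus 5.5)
Your proposal is correct and is essentially the paper's argument: orthogonality to \(\Pic\) and index \(1\) come from \cref{cor:period index} (2) via \cref{lem:albanese lifting,lem:Picard_vaishing}, and the Albanese point then comes from the forward direction of \cref{thm:Kai and Sec} (equivalently \cref{cor: NS 0}), using that \textup{(K)} holds for curves. One small slip in your last paragraph: density of \(\langle D\rangle\) pushes forward to density of \(d\Z\) in \(\deg\Pic(X)=\text{Index}(X)\cdot\Z\), not in \(\Z\), so what you actually get is \(\deg\langle D\rangle=\deg\Pic(X)\) rather than \(d=1\) (claiming \(d=1\) would presuppose index \(1\)); this weaker statement is exactly what the reduction of \(L\) to \(\Pic^0(X)\) requires.
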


\printbibliography

@incollection {Gro95,
    AUTHOR = {Grothendieck, Alexander},
     TITLE = {Technique de descente et th\'eor\`emes d'existence en
              g\'eom\'etrie alg\'ebrique. {VI}. {L}es sch\'emas de {P}icard:
              propri\'et\'es g\'en\'erales},
 BOOKTITLE = {S\'eminaire {B}ourbaki, {V}ol.\ 7},
     PAGES = {Exp. No. 236, 221--243},
 PUBLISHER = {Soc. Math. France, Paris},
      YEAR = {1995},
      ISBN = {2-85629-040-X},
   MRCLASS = {14C22},
  MRNUMBER = {1611207},
}

@misc{OAI26,
  author = {{OpenAI}},
  title = {{The p-adic section conjecture}},
  howpublished = {OpenAI Math Release preprint
                  \href{https://github.com/openai/math/blob/main/preprints/The-p-adic-section-conjecture-September-24-2026/main.pdf}
                  {OAI:The-p-adic-section-conjecture-September-24-2026}},
  year = {2026}
}

@article{ACV26,
  author  = {Jeffrey D. Achter and Sebastian Casalaina-Martin and Charles Vial},
  title   = {A complete answer to {Albanese} base change for incomplete varieties},
  journal = {Annales de l'Institut Fourier},
  volume  = {76},
  number  = {2},
  pages   = {789--828},
  year    = {2026},
  doi     = {10.5802/aif.3721},
}

@article{BCS08,
  author  = {Borovoi, Mikhail and Colliot-Th{\'e}l{\`e}ne, Jean-Louis and Skorobogatov, Alexei N.},
  title   = {The elementary obstruction and homogeneous spaces},
  journal = {Duke Mathematical Journal},
  volume  = {141},
  number  = {2},
  pages   = {321--364},
  year    = {2008},
  doi     = {10.1215/S0012-7094-08-14124-9}
}

@article {Don05,
    AUTHOR = {Arapura, Donu},
     TITLE = {The {L}eray spectral sequence is motivic},
   JOURNAL = {Invent. Math.},
  FJOURNAL = {Inventiones Mathematicae},
    VOLUME = {160},
      YEAR = {2005},
    NUMBER = {3},
     PAGES = {567--589},
      ISSN = {0020-9910,1432-1297},
   MRCLASS = {14F42 (14D07)},
  MRNUMBER = {2178703},
MRREVIEWER = {Byungheup\ Jun},
       DOI = {10.1007/s00222-004-0416-x},
       URL = {https://doi.org/10.1007/s00222-004-0416-x},
}

@article {DS08,
    AUTHOR = {Eriksson, Dennis and Scharaschkin, Victor},
     TITLE = {On the {B}rauer-{M}anin obstruction for zero-cycles on curves},
   JOURNAL = {Acta Arith.},
  FJOURNAL = {Acta Arithmetica},
    VOLUME = {135},
      YEAR = {2008},
    NUMBER = {2},
     PAGES = {99--110},
      ISSN = {0065-1036,1730-6264},
   MRCLASS = {14H25 (11G35 14C15 14F22 14G25)},
  MRNUMBER = {2453526},
MRREVIEWER = {Patrick\ K.\ Corn},
       DOI = {10.4064/aa135-2-1},
       URL = {https://doi.org/10.4064/aa135-2-1},
}

@book {Gir71,
    AUTHOR = {Giraud, Jean},
     TITLE = {Cohomologie non ab\'elienne},
    SERIES = {Die Grundlehren der mathematischen Wissenschaften},
    VOLUME = {Band 179},
 PUBLISHER = {Springer-Verlag, Berlin-New York},
      YEAR = {1971},
     PAGES = {ix+467},
   MRCLASS = {14F20 (14L99 18D30 18F20 55B99 55F65)},
  MRNUMBER = {344253},
MRREVIEWER = {R.\ T.\ Hoobler},
}

@article{Kai16,
  author  = {Kai, Wataru},
  title   = {A higher-dimensional generalization of Lichtenbaum duality in terms of the Albanese map},
  journal = {Compositio Mathematica},
  volume  = {152},
  number  = {9},
  pages   = {1915--1934},
  year    = {2016},
  doi     = {10.1112/S0010437X16007600}
}

@article {Lie07,
    AUTHOR = {Lieblich, Max},
     TITLE = {Moduli of twisted sheaves},
   JOURNAL = {Duke Math. J.},
  FJOURNAL = {Duke Mathematical Journal},
    VOLUME = {138},
      YEAR = {2007},
    NUMBER = {1},
     PAGES = {23--118},
      ISSN = {0012-7094,1547-7398},
   MRCLASS = {14D20 (14A20 14F05)},
  MRNUMBER = {2309155},
MRREVIEWER = {Nicolas\ Perrin},
       DOI = {10.1215/S0012-7094-07-13812-2},
       URL = {https://doi.org/10.1215/S0012-7094-07-13812-2},
}

@article {Mat55,
    AUTHOR = {Mattuck, Arthur},
     TITLE = {Abelian varieties over {$p$}-adic ground fields},
   JOURNAL = {Ann. of Math. (2)},
  FJOURNAL = {Annals of Mathematics. Second Series},
    VOLUME = {62},
      YEAR = {1955},
     PAGES = {92--119},
      ISSN = {0003-486X},
   MRCLASS = {14.0X},
  MRNUMBER = {71116},
MRREVIEWER = {C.\ Chevalley},
       DOI = {10.2307/2007101},
       URL = {https://doi.org/10.2307/2007101},
}

@book {Mil06,
    AUTHOR = {Milne, J. S.},
     TITLE = {Arithmetic duality theorems},
   EDITION = {Second},
 PUBLISHER = {BookSurge, LLC, Charleston, SC},
      YEAR = {2006},
     PAGES = {viii+339},
      ISBN = {1-4196-4274-X},
   MRCLASS = {14F20 (11G25 11G35 11R34 12G10)},
  MRNUMBER = {2261462},
}

@article {MT03,
    AUTHOR = {Spie\ss, Michael and Szamuely, Tam\'as},
     TITLE = {On the {A}lbanese map for smooth quasi-projective varieties},
   JOURNAL = {Math. Ann.},
  FJOURNAL = {Mathematische Annalen},
    VOLUME = {325},
      YEAR = {2003},
    NUMBER = {1},
     PAGES = {1--17},
      ISSN = {0025-5831,1432-1807},
   MRCLASS = {14C25 (14F42)},
  MRNUMBER = {1957261},
MRREVIEWER = {Vladimir\ I.\ Guletski\u i},
       DOI = {10.1007/s00208-002-0359-8},
       URL = {https://doi.org/10.1007/s00208-002-0359-8},
}

@article{Ser58,
     author = {Serre, Jean-Pierre},
     title = {Morphismes universels et vari\'et\'e {d'Albanese}},
     journal = {S\'eminaire Claude Chevalley},
     note = {talk:10},
     pages = {1--22},
     year = {1958-1959},
     publisher = {Secr\'etariat math\'ematique},
     volume = {4},
     zbl = {0123.13903},
     language = {fr},
     url = {https://www.numdam.org/item/SCC_1958-1959__4__A10_0/}
}

@book {Ser88,
    AUTHOR = {Serre, Jean-Pierre},
     TITLE = {Algebraic groups and class fields},
    SERIES = {Graduate Texts in Mathematics},
    VOLUME = {117},
      NOTE = {Translated from the French},
 PUBLISHER = {Springer-Verlag, New York},
      YEAR = {1988},
     PAGES = {x+207},
      ISBN = {0-387-96648-X},
   MRCLASS = {14K30 (11G45 11S31 14L40)},
  MRNUMBER = {918564},
       DOI = {10.1007/978-1-4612-1035-1},
       URL = {https://doi.org/10.1007/978-1-4612-1035-1},
}

@incollection {Tat63,
    AUTHOR = {Tate, John},
     TITLE = {Duality theorems in {G}alois cohomology over number fields},
 BOOKTITLE = {Proc. {I}nternat. {C}ongr. {M}athematicians ({S}tockholm,
              1962)},
     PAGES = {288--295},
 PUBLISHER = {Inst. Mittag-Leffler, Djursholm},
      YEAR = {1963},
   MRCLASS = {10.66},
  MRNUMBER = {175892},
}

@article {Wit08,
    AUTHOR = {Wittenberg, Olivier},
     TITLE = {On {A}lbanese torsors and the elementary obstruction},
   JOURNAL = {Math. Ann.},
  FJOURNAL = {Mathematische Annalen},
    VOLUME = {340},
      YEAR = {2008},
    NUMBER = {4},
     PAGES = {805--838},
      ISSN = {0025-5831,1432-1807},
   MRCLASS = {14C25 (14K05)},
  MRNUMBER = {2372739},
MRREVIEWER = {Francisco\ J.\ Plaza Mart\'in},
       DOI = {10.1007/s00208-007-0170-7},
       URL = {https://doi.org/10.1007/s00208-007-0170-7},
}

@article {SZ08,
    AUTHOR = {Skorobogatov, Alexei N. and Zarhin, Yuri G.},
     TITLE = {A finiteness theorem for the {B}rauer group of abelian
              varieties and {$K3$} surfaces},
   JOURNAL = {J. Algebraic Geom.},
  FJOURNAL = {Journal of Algebraic Geometry},
    VOLUME = {17},
      YEAR = {2008},
    NUMBER = {3},
     PAGES = {481--502},
      ISSN = {1056-3911,1534-7486},
   MRCLASS = {14F22 (14F20 14J28 14K15)},
  MRNUMBER = {2395136},
MRREVIEWER = {Andrei\ D.\ Halanay},
       DOI = {10.1090/S1056-3911-07-00471-7},
       URL = {https://doi.org/10.1090/S1056-3911-07-00471-7},
}

@article {BES15,
    AUTHOR = {Borne, Niels and Emsalem, Michel and Stix, Jakob},
     TITLE = {Lifting {G}alois sections along torsors},
   JOURNAL = {Math. Nachr.},
  FJOURNAL = {Mathematische Nachrichten},
    VOLUME = {288},
      YEAR = {2015},
    NUMBER = {17-18},
     PAGES = {1939--1951},
      ISSN = {0025-584X,1522-2616},
   MRCLASS = {14H30 (14G05)},
  MRNUMBER = {3434290},
MRREVIEWER = {Marco\ A.\ Garuti},
       DOI = {10.1002/mana.201400251},
       URL = {https://doi.org/10.1002/mana.201400251},
}

@article {BT08,
    AUTHOR = {Bogomolov, Fedor and Tschinkel, Yuri},
     TITLE = {Reconstruction of function fields},
   JOURNAL = {Geom. Funct. Anal.},
  FJOURNAL = {Geometric and Functional Analysis},
    VOLUME = {18},
      YEAR = {2008},
    NUMBER = {2},
     PAGES = {400--462},
      ISSN = {1016-443X,1420-8970},
   MRCLASS = {11R58 (14G32)},
  MRNUMBER = {2421544},
MRREVIEWER = {Gabriel\ D.\ Villa-Salvador},
       DOI = {10.1007/s00039-008-0665-8},
       URL = {https://doi.org/10.1007/s00039-008-0665-8},
}

@article{EW09,
  author  = {H{\'e}l{\`e}ne Esnault and Olivier Wittenberg},
  title   = {Remarks on cycle classes of sections of the arithmetic fundamental group},
  journal = {Moscow Mathematical Journal},
  volume  = {9},
  number  = {3},
  year    = {2009},
  pages   = {451--467}
}

@article {EW10,
    AUTHOR = {Esnault, H\'el\`ene and Wittenberg, Olivier},
     TITLE = {On abelian birational sections},
   JOURNAL = {J. Amer. Math. Soc.},
  FJOURNAL = {Journal of the American Mathematical Society},
    VOLUME = {23},
      YEAR = {2010},
    NUMBER = {3},
     PAGES = {713--724},
      ISSN = {0894-0347,1088-6834},
   MRCLASS = {14G32 (14C25)},
  MRNUMBER = {2629985},
MRREVIEWER = {Pierre\ A.\ Lochak},
       DOI = {10.1090/S0894-0347-10-00660-0},
       URL = {https://doi.org/10.1090/S0894-0347-10-00660-0},
}

@incollection {Gro97,
    AUTHOR = {Grothendieck, Alexandre},
     TITLE = {Esquisse d'un programme},
 BOOKTITLE = {Geometric {G}alois actions, 1},
    SERIES = {London Math. Soc. Lecture Note Ser.},
    VOLUME = {242},
     PAGES = {5--48},
      NOTE = {With an English translation on pp.\ 243--283},
 PUBLISHER = {Cambridge Univ. Press, Cambridge},
      YEAR = {1997},
      ISBN = {0-521-59642-4},
   MRCLASS = {14H10 (14F20 14H30)},
  MRNUMBER = {1483107},
MRREVIEWER = {Hiroaki\ Nakamura and Yasutaka\ Ihara},
}

@incollection {Gro97b,
    AUTHOR = {Grothendieck, Alexander},
     TITLE = {Brief an {G}. {F}altings},
 BOOKTITLE = {Geometric {G}alois actions, 1},
    SERIES = {London Math. Soc. Lecture Note Ser.},
    VOLUME = {242},
     PAGES = {49--58},
      NOTE = {With an English translation on pp.\ 285--293},
 PUBLISHER = {Cambridge Univ. Press, Cambridge},
      YEAR = {1997},
      ISBN = {0-521-59642-4},
   MRCLASS = {14E20 (14A15)},
  MRNUMBER = {1483108},
MRREVIEWER = {Helmut\ V\"olklein},
}

@article {HS09,
    AUTHOR = {Harari, David and Szamuely, Tam\'as},
     TITLE = {Galois sections for abelianized fundamental groups},
      NOTE = {With an appendix by E. V. Flynn},
   JOURNAL = {Math. Ann.},
  FJOURNAL = {Mathematische Annalen},
    VOLUME = {344},
      YEAR = {2009},
    NUMBER = {4},
     PAGES = {779--800},
      ISSN = {0025-5831,1432-1807},
   MRCLASS = {14G32 (14H25)},
  MRNUMBER = {2507624},
       DOI = {10.1007/s00208-008-0327-z},
       URL = {https://doi.org/10.1007/s00208-008-0327-z},
}

@article {Lic69,
    AUTHOR = {Lichtenbaum, Stephen},
     TITLE = {Duality theorems for curves over {$p$}-adic fields},
   JOURNAL = {Invent. Math.},
  FJOURNAL = {Inventiones Mathematicae},
    VOLUME = {7},
      YEAR = {1969},
     PAGES = {120--136},
      ISSN = {0020-9910,1432-1297},
   MRCLASS = {14.40},
  MRNUMBER = {242831},
MRREVIEWER = {Manfred\ Herrmann},
       DOI = {10.1007/BF01389795},
       URL = {https://doi.org/10.1007/BF01389795},
}

@article{Moc99,
author = {Mochizuki, Shinichi},
year = {1999},
month = {01},
pages = {319-423},
title = {The local pro-p anabelian geometry of curves},
volume = {138},
journal = {Inventiones Mathematicae},
doi = {10.1007/s002220050381}
}

@article {Moc05,
    AUTHOR = {Mochizuki, Shinichi},
     TITLE = {Galois sections in absolute anabelian geometry},
   JOURNAL = {Nagoya Math. J.},
  FJOURNAL = {Nagoya Mathematical Journal},
    VOLUME = {179},
      YEAR = {2005},
     PAGES = {17--45},
   MRCLASS = {14G32 (14G20 14G25 14H30)},
  MRNUMBER = {2164400},
MRREVIEWER = {Tam\'as\ Szamuely},
       DOI = {10.1017/S0027763000025599},
       URL = {https://doi.org/10.1017/S0027763000025599},
}

@article {Moc12,
    AUTHOR = {Mochizuki, Shinichi},
     TITLE = {Topics in absolute anabelian geometry {I}: generalities},
   JOURNAL = {J. Math. Sci. Univ. Tokyo},
  FJOURNAL = {The University of Tokyo. Journal of Mathematical Sciences},
    VOLUME = {19},
      YEAR = {2012},
    NUMBER = {2},
     PAGES = {139--242},
      ISSN = {1340-5705},
   MRCLASS = {14G32 (12F10 14L40 20E18)},
  MRNUMBER = {2987306},
MRREVIEWER = {Jon\ Eivind\ Vatne},
}

@article {Pop10c,
    AUTHOR = {Pop, Florian},
     TITLE = {On the birational {$p$}-adic section conjecture},
   JOURNAL = {Compos. Math.},
  FJOURNAL = {Compositio Mathematica},
    VOLUME = {146},
      YEAR = {2010},
    NUMBER = {3},
     PAGES = {621--637},
      ISSN = {0010-437X,1570-5846},
   MRCLASS = {14G32 (11G20 12E30 12L12)},
  MRNUMBER = {2644930},
MRREVIEWER = {Tam\'as\ Szamuely},
       DOI = {10.1112/S0010437X09004436},
       URL = {https://doi.org/10.1112/S0010437X09004436},
}

@inbook{Pop11, place={Cambridge}, series={London Mathematical Society Lecture Note Series}, title={Lectures on anabelian phenomena in geometry and arithmetic}, booktitle={Non-abelian Fundamental Groups and Iwasawa Theory}, publisher={Cambridge University Press}, author={Pop, Florian}, year={2011}, pages={1–55}, collection={London Mathematical Society Lecture Note Series}}

@article {Pop17,
    AUTHOR = {Pop, Florian},
     TITLE = {The {$\mathbb{Z}/p$} metabelian birational {$p$}-adic section
              conjecture for varieties},
   JOURNAL = {Compos. Math.},
  FJOURNAL = {Compositio Mathematica},
    VOLUME = {153},
      YEAR = {2017},
    NUMBER = {7},
     PAGES = {1433--1445},
      ISSN = {0010-437X,1570-5846},
   MRCLASS = {14G32 (11G20 12F10)},
  MRNUMBER = {3705263},
MRREVIEWER = {Ariyan\ Javanpeykar},
       DOI = {10.1112/S0010437X1700714X},
       URL = {https://doi.org/10.1112/S0010437X1700714X},
}

@unpublished{Pop25,
      title={Generalizations and minimalistic refinements of the t-birational Section Conjecture}, 
      author={Florian Pop},
      year={2026},
      eprint={2604.26131},
      archivePrefix={arXiv},
      primaryClass={math.AG},
      url={https://arxiv.org/abs/2604.26131}, 
      note={},
}

@inbook{Sai11, place={Cambridge}, series={London Mathematical Society Lecture Note Series}, title={Around the Grothendieck anabelian section conjecture}, booktitle={Non-abelian Fundamental Groups and Iwasawa Theory}, publisher={Cambridge University Press}, author={Saïdi, Mohamed}, editor={Coates, John and Kim, Minhyong and Pop, Florian and Saïdi, Mohamed and Schneider, Peter}, year={2011}, pages={72–106}, collection={London Mathematical Society Lecture Note Series}}

@article {Sai12,
    AUTHOR = {Sa\"idi, Mohamed},
     TITLE = {The cuspidalisation of sections of arithmetic fundamental
              groups},
   JOURNAL = {Adv. Math.},
  FJOURNAL = {Advances in Mathematics},
    VOLUME = {230},
      YEAR = {2012},
    NUMBER = {4-6},
     PAGES = {1931--1954},
      ISSN = {0001-8708,1090-2082},
   MRCLASS = {14G32 (11R32 12F10)},
  MRNUMBER = {2927358},
MRREVIEWER = {Hiroaki\ Nakamura},
       DOI = {10.1016/j.aim.2012.04.002},
       URL = {https://doi.org/10.1016/j.aim.2012.04.002},
}

@article {Sai21,
    AUTHOR = {Sa\"idi, Mohamed},
     TITLE = {Arithmetic of {$p$}-adic curves and sections of geometrically
              abelian fundamental groups},
   JOURNAL = {Math. Z.},
  FJOURNAL = {Mathematische Zeitschrift},
    VOLUME = {297},
      YEAR = {2021},
    NUMBER = {3-4},
     PAGES = {1191--1203},
      ISSN = {0025-5874,1432-1823},
   MRCLASS = {11G20 (14H30 14K15)},
  MRNUMBER = {4229597},
MRREVIEWER = {Christopher\ David\ Lazda},
       DOI = {10.1007/s00209-020-02553-1},
       URL = {https://doi.org/10.1007/s00209-020-02553-1},
}

@book{Ser01,
  title={Galois Cohomology},
  author={Serre, J.P.},
  isbn={9783540421924},
  lccn={97020214},
doi={10.1007/978-3-642-59141-9_1},
  series={Springer Monographs in Mathematics},
  url={https://doi.org/10.1007/978-3-642-59141-9_1},
  year={2001},
  publisher={Springer Berlin Heidelberg}
}

@article{ST22,
  author  = {Mohamed Sa{\"i}di and Akio Tamagawa},
  title   = {The $m$-step solvable anabelian geometry of number fields},
  journal = {Journal f{\"u}r die reine und angewandte Mathematik},
  volume  = {789},
  year    = {2022},
  pages   = {153--186},
  doi     = {10.1515/crelle-2022-0025}
}

@book {Sti02b,
    AUTHOR = {Stix, Jakob},
     TITLE = {Projective anabelian curves in positive characteristic and
              descent theory for log-\'etale covers},
    SERIES = {Bonner Mathematische Schriften [Bonn Mathematical
              Publications]},
    VOLUME = {354},
      NOTE = {Dissertation, Rheinische Friedrich-Wilhelms-Universit\"at
              Bonn, Bonn, 2002},
 PUBLISHER = {Universit\"at Bonn, Mathematisches Institut, Bonn},
      YEAR = {2002},
     PAGES = {xviii+118},
   MRCLASS = {14H30 (14E20 14H10)},
  MRNUMBER = {2012864},
MRREVIEWER = {Mark\ Kisin},
}

@article {Sti10,
    AUTHOR = {Stix, Jakob},
     TITLE = {On the period-index problem in light of the section conjecture},
   JOURNAL = {Amer. J. Math.},
  FJOURNAL = {American Journal of Mathematics},
    VOLUME = {132},
      YEAR = {2010},
    NUMBER = {1},
     PAGES = {157--180},
      ISSN = {0002-9327,1080-6377},
   MRCLASS = {14G32 (14F22 14G05 14H30)},
  MRNUMBER = {2597509},
MRREVIEWER = {Oscar\ G.\ Villareal},
       DOI = {10.1353/ajm.0.0092},
       URL = {https://doi.org/10.1353/ajm.0.0092},
}

@article {Sti11,
    AUTHOR = {Stix, Jakob},
     TITLE = {The {B}rauer-{M}anin obstruction for sections of the
              fundamental group},
   JOURNAL = {J. Pure Appl. Algebra},
  FJOURNAL = {Journal of Pure and Applied Algebra},
    VOLUME = {215},
      YEAR = {2011},
    NUMBER = {6},
     PAGES = {1371--1397},
      ISSN = {0022-4049,1873-1376},
   MRCLASS = {14G05 (11G30 14G25 14G32 14H30)},
  MRNUMBER = {2769238},
MRREVIEWER = {\c S.\ A.\ Basarab},
       DOI = {10.1016/j.jpaa.2010.08.017},
       URL = {https://doi.org/10.1016/j.jpaa.2010.08.017},
}

@book {Sti13b,
    AUTHOR = {Stix, Jakob},
     TITLE = {Rational points and arithmetic of fundamental groups},
    SERIES = {Lecture Notes in Mathematics},
    VOLUME = {2054},
      NOTE = {Evidence for the section conjecture},
 PUBLISHER = {Springer, Heidelberg},
      YEAR = {2013},
     PAGES = {xx+249},
      ISBN = {978-3-642-30673-0},
   MRCLASS = {14G32 (11G20 14G05)},
  MRNUMBER = {2977471},
MRREVIEWER = {Marco\ A.\ Garuti},
       DOI = {10.1007/978-3-642-30674-7},
       URL = {https://doi.org/10.1007/978-3-642-30674-7},
}

@article {Wic14,
    AUTHOR = {Wickelgren, Kirsten},
     TITLE = {2-nilpotent real section conjecture},
   JOURNAL = {Math. Ann.},
  FJOURNAL = {Mathematische Annalen},
    VOLUME = {358},
      YEAR = {2014},
    NUMBER = {1-2},
     PAGES = {361--387},
   MRCLASS = {14H30 (14P25 55P91)},
  MRNUMBER = {3158001},
MRREVIEWER = {Ambrus\ P\'al},
       DOI = {10.1007/s00208-013-0967-5},
       URL = {https://doi.org/10.1007/s00208-013-0967-5},
}

\end{document}